\documentclass[a4]{amsart}

\input xypic 
\input xy 
\xyoption{all} 
\usepackage{amssymb} 
\usepackage{bbm}
\usepackage{tikz-cd}

\usepackage{float}
\usepackage{graphicx}
\usepackage{accents}

\usepackage{placeins}
\usepackage[bookmarksnumbered, bookmarksopen, 
colorlinks,citecolor=blue,linkcolor=blue,backref]{hyperref}
\usepackage{enumitem}

\newtheorem{theorem}{Theorem}[section]
\newtheorem{proposition}[theorem]{Proposition}
\newtheorem{lemma}[theorem]{Lemma}
\newtheorem{corollary}[theorem]{Corollary}

\theoremstyle{definition}

\newtheorem{remark}[theorem]{Remark}
\newtheorem{example}[theorem]{Example}

\newtheorem{problem}[theorem]{Problem}

\newcounter{RomanNumber}

\newcommand{\rev}[1]{#1}

\newcounter{bean}

\newcommand{\larrow}{\relbar\!\!\relbar\!\!\rightarrow}
\newcommand{\llarrow}{\relbar\!\!\relbar\!\!\larrow}

\newcommand{\qqed}{\hfill\Box}

\begin{document}

\title
[Equivariant degrees]{On the degrees of equivariant maps from spheres to complex Stiefel manifolds}

\author{Haibao Duan}
\address{Yau Mathematical Science Center, Tsinghua University, Beijing 100084  
	\newline
	\indent
	Academy of Mathematics and Systems Science, Chinese Academy of Sciences, Beijing 100190, China}
\email{dhb@math.ac.cn}
\thanks{}

 \author{Ruizhi Huang} 
\address{State Key Laboratory of Mathematical Sciences \& Institute of Mathematics, Academy of Mathematics and Systems Science, Chinese Academy of Sciences, Beijing 100190, China
\newline
\indent School of Mathematical Sciences, University of Chinese Academy of Sciences, Beijing 100049, China} 
\email{huangrz@amss.ac.cn} 
   \urladdr{https://sites.google.com/site/hrzsea/}

\subjclass[2010]{Primary 
55M25; 
57T15, 
Secondary 
55S35; 
57N65; 
}
\keywords{degree, Stiefel manifolds, equivariant maps}


\begin{abstract} 
We study the set of degrees of $\mathbb{Z}/m$-equivariant maps from spheres to complex Stiefel manifolds, motivated by the work of Astey--Gitler--Micha--Pastor. Under a suitable arithmetic condition, this set is determined using results of James, Atiyah--Todd, and Adams--Walker. Our approach is homotopy-theoretic.
\end{abstract}

\maketitle


\section{Introduction}
Let $M$ and $N$ be connected, oriented, closed manifolds of the same dimension $n$. The \emph{Brouwer degree} of a continuous map $f \colon M \larrow N$ is the integer $\mathrm{\deg }(f)$ characterized by the cohomological relation
\[
f^\ast(\omega _{N})={\rm deg}(f)\cdot \omega _{M},
\]
where $f^{\ast }$ denotes the induced homomorphism of $f$ in
cohomology, and $\omega _{N}\in H^{n}(N\rev{;\mathbb{Z}})$ and $\omega _{M}\in H^{n}(M\rev{;\mathbb{Z}})$ denote
the orientation classes of $N$ and $M$, respectively.
\rev{Throughout the paper, integral coefficients are omitted from the notation when no confusion can arise; cohomology with any other coefficient ring will be written explicitly.}

A classical and fundamental problem in algebraic and geometric topology,
dating back to the pioneering work of Brouwer~\cite{Brouwer11}, is to
determine the set of all integers that arise as mapping degrees of
continuous maps from $M$ to $N$
\begin{equation}\label{degsetdefeq}
D(M,N):=\{{\rm deg}(f)~|~f: M\larrow N\}.
\end{equation}
We refer to the references \rev{\cite{DW03,DW04,Wan02}} for historical
account and earlier results on this topic. In particular, based on C. T. C.
Wall's classification of $(n-1)$-connected $2n$-manifolds \cite{Wall62}, the
sets $D(M,N)$ were completely determined for all such manifolds $M$\ and $N$
in \cite{DW03}.

The notion of Brouwer degree admits natural generalizations and refinements,
leading to numerical invariants associated to maps between manifolds that
arise from cohomology classes not necessarily in the top dimension. For
example, let $G$ be a simple Lie group of rank $n$ with nontrivial center $%
\mathcal{Z}(G)\neq \{e\}$. Consider the quotient homomorphism
\[
p: G\larrow PG:=G/\mathcal{Z}(G),
\]
together with its induced homomorphism on the torsion-free parts of the
cohomology rings
\[
p^{\ast}: H^{\ast }(PG)/tor\cong \Lambda (y_1,\cdots, y_n)\larrow \Lambda (x_1,\cdots, x_n)\cong H^{\ast }(G)/tor,
\]
where $y_{i}$ and $x_{i}$ have the same odd degree (e.g. Chevalley 
\cite{Ch}). In \cite{DL17} and \cite[Theorem B]{DW20}, the authors
introduced the \textsl{multi-degree}\textit{\ }of $p$ as the sequence $%
\{a_{1},\cdots ,a_{n}\}$ of integers determined by
\[
p^\ast(y_i)=a_i\cdot x_i, \ \ \ 1\leq i\leq n.
\]
It is shown that the product $a_{1}\cdots a_{n}$ equals the order
of the center $\mathcal{Z}(G)$, and hence coincides with the Brouwer degree
of the covering map $p$.

This paper is concerned with continuous maps from spheres into the complex
Stiefel manifold $V_{n,k}(\mathbb{C})$, consisting of orthonormal complex $k$%
-frames in the $n$-dimensional complex vector space $\mathbb{C}^{n}$. Borel 
\cite{Borel53} showed that there exist canonical elements
\[
x_{2(n-k+1)-1},\ldots,x_{2n-1}\in H^\ast(V_{n,k}(\mathbb{C});\mathbb{Z}), \ \ \ {\rm deg}(x_j)=j, 
\]
such that the integral cohomology ring of $V_{n,k}(\mathbb{C})$ is
given by the exterior algebra
\[
H^\ast(V_{n,k}(\mathbb{C});\mathbb{Z})\cong \Lambda(x_{2(n-k+1)-1},\ldots,x_{2n-1}).
\]
To emphasize this structure, we introduce the index set
\[
I_{n,k}:=\left\{ n-k+1,n-k+2,\cdots ,n\right\} .
\]
For each integer $r\in I_{n,k}$, one can associate to a continuous
map 
\[
f: S^{2r-1}\larrow V_{n,k}(\mathbb{C})
\]
an integer $\deg (f)$, called \textsl{the degree} of $f$, via the
cohomological relation
\[
f^{\ast }(x_{2r-1})=\deg (f)\cdot \omega _{S^{2r-1}}.
\]
In analogy with \eqref{degsetdefeq}, we define the degree set
\[
D(S^{2r-1},V_{n,k}(\mathbb{C})):
=\{{\rm deg}(f)~|~f: S^{2r-1}\larrow V_{n,k}(\mathbb{C})\}, ~\text{for any $r\in I_{n,k}$},
\]
and accordingly introduce a function $h_{n,k}:I_{n,k}\larrow
\mathbb{Z}$ by letting
\[
h_{n,k}(r):= \text{the least positive integer in $D(S^{2r-1},V_{n,k}(\mathbb{C}))$}.
\]
The function $h_{n,k}$ admits an alternative interpretation via the classical Hurewicz homomorphism
\[
hur:\pi _{2r-1}(V_{n,k}(\mathbb{C}))\rightarrow H_{2r-1}(V_{n,k}(\mathbb{C})), \ \ r\in I_{n,k}.
\]
Accordingly, we refer to $h_{n,k}(r)$ as the \emph{$r$-th Hurewicz index of $V_{n,k}(\mathbb{C})$}; see Section~\ref{hnkrsec} for a precise definition and for the proof of this interpretation.

Let $\mathbb{Z}/m$ denote the cyclic group of order $m$. Both the sphere $S^{2r-1}$ and the Stiefel manifold $V_{n,k}(\mathbb{C})$ admit natural
fixed-point-free actions of $\mathbb{Z}/m$. In their study of the
triviality index of the direct sum of the canonical line bundle over a lens
space, Astey-Gitler-Micha-Pastor \cite{AGMP99} initiated the program of
classifying $\mathbb{Z}/m$-equivariant maps from $S^{2r-1}$ into $V_{n,k}(\mathbb{C})$. Motivated by this work, we consider the problem of determining
the degree set of $\mathbb{Z}/m$-equivariant maps
\[
D_{m}(S^{2r-1},V_{n,k}(\mathbb{C})):
=\{{\rm deg}(f)~|~f: S^{2r-1}\larrow V_{n,k}(\mathbb{C})~\text{is~$\mathbb{Z}/m$-equivariant}\},
\]
for any $r\in I_{n,k}$.

The main result of this paper is the following:
\begin{theorem}\label{mainthm}
For any $r\in I_{n,k}$, the degree set
\[
D(S^{2r-1},V_{n,k}(\mathbb{C}))=\left\{ a\cdot h_{n,k}(r)\mid a\in \mathbb{Z}\right\}. 
\]
Further, if $h_{n,k}(r)=1$ then the equivariant degree set $D_{m}(S^{2r-1}, V_{n,k}(\mathbb{C}))$ is either empty or  
\[
D_{m}(S^{2r-1}, V_{n,k}(\mathbb{C}))=\Big\{a\cdot m+  \binom{n}{r}~|~ a\in \mathbb{Z}\Big\}.
\]
\end{theorem}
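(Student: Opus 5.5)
The plan has two parts: the first statement is an algebraic fact about the Hurewicz homomorphism, and the second follows from a transfer/Lefschetz-type argument on the orbit spaces.

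\emph{Degree set as a subgroup.} Since $V_{n,k}(\mathbb{C})$ is $2(n-k)$-connected, every map $f\colon S^{2r-1}\to V_{n,k}(\mathbb{C})$ has a homotopy class $[f]\in\pi_{2r-1}(V_{n,k}(\mathbb{C}))$. Pairing $x_{2r-1}$ with the Hurewicz image gives
\[
\deg(f)=\langle x_{2r-1}, hur[f]\rangle .
\]
Because $hur$ is a group homomorphism, the map $[f]\mapsto \deg(f)$ is a homomorphism $\pi_{2r-1}(V_{n,k}(\mathbb{C}))\to\mathbb{Z}$. Its image is therefore a subgroup of $\mathbb{Z}$. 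By definition of $h_{n,k}(r)$, this subgroup is generated by $h_{n,k}(r)$, which gives $D(S^{2r-1},V_{n,k}(\mathbb{C}))=h_{n,k}(r)\mathbb{Z}$. The same description identifies $h_{n,k}(r)$ with the index of the image of $hur$ composed with projection onto the $x_{2r-1}$-dual summand; this is the Section~\ref{hnkrsec} interpretation.

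\emph{Equivariant case.} Assume $h_{n,k}(r)=1$. The plan is to show separately that every equivariant degree is $\equiv\binom{n}{r}\pmod m$, and that once one equivariant map exists, all such residues are realized.

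\emph{Congruence.} An equivariant $f$ descends to a map of orbit spaces
\[
\bar f\colon L^{2r-1}(m)=S^{2r-1}/(\mathbb{Z}/m)\longrightarrow V_{n,k}(\mathbb{C})/(\mathbb{Z}/m).
\]
The target is the Stiefel-type fibration over a lens space; equivalently, it is the sphere bundle of frames in $n\eta$, where $\eta$ is the canonical line bundle over $L^\infty(m)$, restricted appropriately. I would compute $\bar f^*$ on cohomology using the Borel-type description of $H^*(V_{n,k}(\mathbb{C})/(\mathbb{Z}/m))$. There the lift of $x_{2r-1}$ is related to the transgression of the Chern class $c_r(n\eta)=\binom{n}{r}t^r$, with $t\in H^2(L;\mathbb{Z})\cong\mathbb{Z}/m$. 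Comparing top classes in $L^{2r-1}(m)$, where $H^{2r}$ vanishes and $H^{2r-1}\cong\mathbb{Z}$, should then give $\deg f\equiv\binom{n}{r}\pmod m$.

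An alternative route to the same congruence: take the mapping cylinder, or pull back the principal $\mathbb{Z}/m$-bundles. The equivariant map amounts to $k$ sections, i.e.\ $k$ linearly independent sections of $n\eta$ over $L^{2r-1}(m)$. Its degree measures the obstruction class in $H^{2r}(L^{2r}(m))=\mathbb{Z}/m$ for extending over the $2r$-skeleton, and that class is $c_r(n\eta)$ up to the degree defect.

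\emph{Realization.} Suppose one equivariant map $f_0$ exists. Modify it on a single free orbit of top cells: change $f_0$ on a small disk $D$ in $S^{2r-1}$ by pinching off a sphere and adding $g\in\pi_{2r-1}(V_{n,k}(\mathbb{C}))$, and do the same on each translate $\zeta^iD$ using $\zeta^i\cdot g$. The group acts on $V_{n,k}(\mathbb{C})$ by multiplication by scalars. That action is homotopic to the identity, since $U(1)$ is connected, so each translate contributes $\deg g$. The total change in degree is $m\deg g$. Since $h_{n,k}(r)=1$, $\deg g$ can be any integer, so all of $\binom{n}{r}+m\mathbb{Z}$ is realized.

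The main obstacle is the congruence step: correctly identifying the image of $x_{2r-1}$ in the orbit space and showing the mod-$m$ defect is exactly $\binom{n}{r}$. I would first try the obstruction-theoretic formulation via $c_r(n\eta)$ on $L^{2r}(m)$, checked against the standard case $k=1$. In that case $V_{n,1}(\mathbb{C})=S^{2n-1}$ and $r=n$, and the classical fact that equivariant self-maps of $S^{2n-1}$ have degree $\equiv1\pmod m$ matches $\binom{n}{n}=1$.
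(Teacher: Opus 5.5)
Your proof of the first assertion is fine. Your realization step is also correct: pinching off spheres on a free orbit $\zeta^iD$ of small disks is the upstairs picture of the paper's coaction modification $\hat g=g+(c^\ast\circ p_{n,k\ast})(\epsilon)$ on $L^{2r-1}(m)$. Computing the change $m\deg g$ directly on $S^{2r-1}$ even lets you bypass Lemma~\ref{rmodifydegreelemma}.

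The genuine gap is the congruence $\deg(f)\equiv\binom nr\pmod m$. This is the real content of the second assertion, and you leave it at ``should then give''.

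Your first route cannot work as described. Comparing integral classes in degree $2r-1$ along $\rho$ and the covering $V_{n,k}(\mathbb{C})\to V_{n,k}(\mathbb{C})/(\mathbb{Z}/m)$ only yields a relation of the form $m\deg(\bar f)=a\deg(f)$ (compare Lemma~\ref{neccondleema1}). For $k=1$, $r=n$ this reads $\deg\bar f=\deg f$ and says nothing mod $m$. The residue $\binom nr$ is torsion information, invisible in top-degree integral cohomology.

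Your second route is a restatement rather than a proof. The $2r$-cell of $L^{2r-1}(m)\cup_\rho e^{2r}$ is attached by $\rho$ itself, so the obstruction cochain on that cell is essentially $[f]$. Asserting that its class in $\mathbb{Z}/m$ is $c_r(n\eta)=\binom nr t^r$ ``up to the degree defect'' is therefore precisely the congruence to be proved. Moreover, for $r>n-k+1$ this is not the primary obstruction, since the fibre has homotopy below degree $2r-1$. So the classical identification of the primary obstruction with a Chern class does not apply.

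What is missing is a mechanism that turns $\deg f$ into a torsion class where Borel's transgression is visible. The paper gets this by extending the structure group from $\mathbb{Z}/m$ to $S^1$. It factors $f=f^s\circ\kappa$ with $\kappa\colon S^{2r-1}\to S^{2r}(m)=S^{2r-1}\times_{\mathbb{Z}/m}S^1$, where $\kappa^\ast$ is an isomorphism in degree $2r-1$. Here $f^s$ is a map of principal circle bundles covering $\bar f\colon L^{2r-1}(m)\to PV_{n,k}(\mathbb{C})$, with $\bar f^\ast\omega=\omega$.

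In the two Gysin sequences, the connecting map $\Theta$ sends $\hat\omega_{2r-1}$ to the generator $\omega^{r-1}$ of $H^{2r-2}(L^{2r-1}(m))\cong\mathbb{Z}/m$. This holds because $p^\ast$ is multiplication by $m$ and $H^{2r}(L^{2r-1}(m))=0$. On the other side, $\Theta$ sends $x_{2r-1}$ to $\binom nr\omega^{r-1}$ by Lemma~\ref{Borellemma}, which comes from $\tau(x_{2r-1})=c_r$ pulling back to $\binom nr u^r$ under $f_0$. Naturality of $\Theta$ then gives $\bigl(\deg f-\binom nr\bigr)\omega^{r-1}=0$ in $\mathbb{Z}/m$.

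Your obstruction-theoretic approach could in principle be completed, but only by supplying essentially this transgression computation. As written, the key step is absent.
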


To make Theorem~\ref{mainthm} effective for computing the equivariant degree set $D_{m}(S^{2r-1},V_{n,k}(\mathbb{C}))$, it remains to:
\begin{enumerate}[label=(\Alph*)]
\item\label{probA} determine those integers $r \in I_{n,k}$ for which $h_{n,k}(r)=1$;
\item\label{probB}  construct an equivariant map, or show that no such map exists.
\end{enumerate}

Problem \ref{probA} was studied by James \cite{James58} and Atiyah-Todd \cite{AT60}, and was essentially resolved by Adams-Walker \cite{AW65}. 
\rev{Indeed, Lemma~\ref{redhurnohlemma} reduces the computation of $h_{n,k}(r)$ to the top-dimensional index $h_{r,k-n+r}(r)$, and Lemma~\ref{h=liftlemma} identifies the condition $h_{r,k-n+r}(r)=1$ with the existence of a cross-section of the corresponding Stiefel fibration.}
Recall that every positive integer $n$ admits a unique prime decomposition
\[
n=2^{\nu_{2}(n)}\cdot 3^{\nu_{3}(n)}\cdot 5^{\nu_{5}(n)}\cdot \cdots,
\]
where the product runs over all prime numbers and $\nu_{p}(n)$
denotes the $p$-adic valuation of $n$. Thus, $n$ is uniquely determined by
the finite sequence $\left\{ \nu_{p}(n)\right\} $. In this context, the 
\textsl{Atiyah-Todd number} $M_{k}$ \cite{AT60} is defined by specifying
its $p$-adic valuations:
\begin{equation}\label{AW-eq}
 \nu_p(M_k)=\left\{\begin{array}{cc}
    {\rm max}\big(s+\nu_p(s)~|~1\leq s\leq \rev{\left\lfloor\frac{k-1}{p-1}\right\rfloor}\big) & \ \ \  \ ~{\rm if}~ p\leq k    \\
    0 & \ \ \  \ ~{\rm if}~  p> k,    \\
  \end{array}\right.
\end{equation}
where $p$ ranges over all primes. This formula provides an
effective way to compute $M_{k}$ in terms of $k$. For example, the values of 
$M_{k}$ for $1\leq k\leq 10$ are listed below.
\begin{table}[h]
	\centering
	\caption{Values of $M_k$ for $1 \leq k \leq 10$}
	\label{ATtable}
	\begin{tabular}{|c||c|c|c|c|c|c|}
		\hline
		$k$ & $1$ & $2$ & $3, \ 4$ & $5, \ 6$ & $7, \ 8$ & $9, \ 10$ \\
		\hline\hline
		$M_k$ & $1$ & $2$ & $2^3\cdot 3$ & $2^6\cdot 3^2\cdot 5$ & $2^7\cdot 3^4\cdot 5\cdot 7$  & $2^{11}\cdot 3^4\cdot 5^2\cdot 7$\\
		\hline
	\end{tabular}
\end{table}

Building on the work of Adams-Walker~\cite{AW65}, we establish the following characterization as a complement to Theorem~\ref{mainthm}.

\begin{theorem}\label{mainthmAW}
The condition $h_{n,k}(r)=1$ in Theorem~\ref{mainthm} is
equivalent to the divisibility condition\textsl{\ }$M_{r-(n-k)}\mid r$.
\end{theorem}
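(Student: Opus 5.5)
The plan is to reduce, in two steps, to a cross-section problem for a Stiefel fibration, and then to quote the theorem of Adams--Walker \cite{AW65}. Write $j:=r-(n-k)$. Since $r\in I_{n,k}$, we have $1\le j\le k$.

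\textbf{Step 1.} By Lemma~\ref{redhurnohlemma}, the Hurewicz index $h_{n,k}(r)$ equals the top-dimensional index $h_{r,j}(r)$ of the Stiefel manifold $V_{r,j}(\mathbb{C})$. Heuristically this holds because $V_{r,j}(\mathbb{C})\to V_{n,k}(\mathbb{C})$ is (up to the fibration $V_{n,k}\to V_{n,n-r}$) a $(2r-1)$-equivalence on the relevant cells, and it carries $x_{2r-1}$ to the top generator. So it suffices to treat $n=r$, $k=j$.

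\textbf{Step 2.} By Lemma~\ref{h=liftlemma}, $h_{r,j}(r)=1$ holds if and only if the fibration
\[
V_{r-1,j-1}(\mathbb{C})\larrow V_{r,j}(\mathbb{C})\larrow S^{2r-1}
\]
admits a cross-section. The underlying reason is that $x_{2r-1}$ is pulled back from the base sphere. A map $S^{2r-1}\to V_{r,j}(\mathbb{C})$ of degree one therefore composes with the projection to a self-map of $S^{2r-1}$ of degree one, which can be corrected to a section by a homotopy lift. Conversely, a section has degree one.

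\textbf{Step 3.} The remaining input is the theorem of James, Atiyah--Todd and Adams--Walker. It states that the fibration $V_{r,j}(\mathbb{C})\to S^{2r-1}$ has a section if and only if $M_j\mid r$. The "only if" direction is Atiyah--Todd: they apply $K$-theory (the $J$-homomorphism and the Adams operations) to the stunted projective space $\mathbb{C}P^{r-1}/\mathbb{C}P^{r-j-1}$, which is the stable cofibre relevant to the section problem via James' reduction. The "if" direction is Adams--Walker, who show that the $K$-theoretic obstruction is the only one. To apply this cleanly, I will use James' identification of the section problem with the coreducibility of $\mathbb{C}P^{r-1}/\mathbb{C}P^{r-j-1}$. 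I will also check that the indexing in \eqref{AW-eq} matches the convention in \cite{AW65}, with $k$ there equal to our $j$. Combining the three steps then gives $h_{n,k}(r)=1\iff M_{r-(n-k)}\mid r$.

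The main obstacle is not in this paper itself: it is the content of the Adams--Walker sufficiency result. Within our argument, the delicate points are the two reduction lemmas and the bookkeeping of indices. In Lemma~\ref{redhurnohlemma} one has to check that the restriction of $x_{2r-1}$ to $V_{r,j}(\mathbb{C})$ is exactly the top class, with no extra multiple. I would verify this through Borel's description of the generators via the cell structure and the suspension of stunted projective spaces.
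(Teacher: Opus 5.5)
Your proposal is correct and is essentially the paper's proof. The paper likewise uses Lemma~\ref{redhurnohlemma} and Lemma~\ref{degfhlemma}, combined as Lemma~\ref{h=liftlemma}, to identify $h_{n,k}(r)=1$ with the existence of a cross-section of $V_{r,r-(n-k)}(\mathbb{C})\to S^{2r-1}$, and then cites the Adams--Walker theorem (Theorem~\ref{awthm}) as a black box. One minor historical slip does not affect the argument: the $J$-homomorphism and Adams operations come from Adams--Walker's sufficiency proof, not Atiyah--Todd's 1960 necessity argument, which predates Adams operations.
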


Regarding Problem~\ref{probB} on the existence of an equivariant map, we present two special cases.

\begin{theorem}[Theorems \ref{n-k+1degfthm} and \ref{n-k+2degfthm}]\label{mainthmex}
When $r=n-k+1$, or when $r=n-k+2$ is even and $
\binom{n}{1}\equiv \binom{n}{k-1}\equiv \binom{n}{k}\equiv 0~{\rm mod}~m
$, 
we have 
\[
D_{m}(S^{2r-1}, V_{n,k}(\mathbb{C}))=\Big\{a\cdot m+  \binom{n}{r}~|~ a\in \mathbb{Z}\Big\}.
\]
\end{theorem}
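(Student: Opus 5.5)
By Theorem~\ref{mainthm}, once we know that $h_{n,k}(r)=1$ and that at least one $\mathbb{Z}/m$-equivariant map $S^{2r-1}\to V_{n,k}(\mathbb{C})$ exists, the equivariant degree set is automatically the full coset $\{a m+\binom{n}{r}\}$. So the plan in both cases has two steps: (i) verify $h_{n,k}(r)=1$, and (ii) construct an equivariant map, or show one exists by equivariant obstruction theory.

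For $r=n-k+1$, step (i) uses Lemma~\ref{redhurnohlemma} to reduce to $h_{r,1}(r)$. Since $V_{r,1}(\mathbb{C})=S^{2r-1}$, this index is $1$; equivalently $M_1=1$ divides $r$, as in Theorem~\ref{mainthmAW}. For step (ii) we use obstruction theory. The action of $\mathbb{Z}/m$ on $S^{2r-1}$ is free, so an equivariant map is the same as a section of the associated bundle $S^{2r-1}\times_{\mathbb{Z}/m}V_{n,k}(\mathbb{C})\to L^{2r-1}$ over the lens space $L^{2r-1}=S^{2r-1}/(\mathbb{Z}/m)$. The fibre $V_{n,k}(\mathbb{C})$ is $(2(n-k)+1-1)=(2r-2)$-connected, and $\dim L=2r-1$. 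All obstructions therefore vanish, since they lie in $H^{j+1}(L;\pi_j)$ with $j\ge 2r-1$, where $H^{2r}(L)=0$. Hence a section exists. Concretely, one could also write down the standard map $z\mapsto$ the frame whose columns are $z$ placed in shifted blocks, using the diagonal weight action. Either way the degree formula of Theorem~\ref{mainthm} then gives the claim.

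For $r=n-k+2$ even, step (i) reduces via Lemma~\ref{redhurnohlemma} to $h_{r,2}(r)$. By Theorem~\ref{mainthmAW} this equals $1$ iff $M_2=2$ divides $r$, which holds because $r$ is even. Step (ii) is the main obstacle. The fibre is now only $(2r-4)$-connected, with $\pi_{2r-3}(V_{n,k})\cong\mathbb{Z}$ and $\pi_{2r-2}(V_{n,k})$ possibly $\mathbb{Z}/2$ or $0$. Obstructions to an equivariant section therefore live in $H^{2r-2}(L;\mathbb{Z}_\rho)$ and $H^{2r-1}(L;\pi_{2r-2})$, and we can compute them as follows.

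First, the primary obstruction in $H^{2r-2}(L;\mathbb{Z})\cong\mathbb{Z}/m$ is the Euler-type class of the bundle. Identifying $V_{n,k}\to V_{n,1}$ fibrations, it equals the top Chern class $c_{n-k+1}$ of the relevant complement bundle $k$-fold... More concretely, it is the class $\binom{n}{k-1}x^{r-1}$ (or a combination of $\binom{n}{k-1}$ and $\binom{n}{1}$ coming from $c(n\gamma)=(1+x)^n$) in $\mathbb{Z}[x]/(mx)$. The hypothesis that these binomials vanish mod $m$ kills it.

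Second, we handle the secondary obstruction in $H^{2r-1}(L;\pi_{2r-2})$. When $\pi_{2r-2}=0$ there is nothing to do. When $\pi_{2r-2}=\mathbb{Z}/2$, we use the freedom of changing the lift on the $(2r-3)$-skeleton by elements of $H^{2r-3}(L;\mathbb{Z})$ to kill it. I expect this to be controlled by $\binom{n}{k}$ mod $m$, again via the Chern class expansion $(1+x)^n$, and hence zero under the hypothesis.

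Once an equivariant map exists, Theorem~\ref{mainthm} finishes the proof.
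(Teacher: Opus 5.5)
\textbf{There is a genuine gap: for $r=n-k+2$ you do not show that an equivariant map exists.} The rest is fine. Your Hurewicz-index computations and the case $r=n-k+1$ are correct. Obstruction theory for sections of $S^{2r-1}\times_{\mathbb{Z}/m}V_{n,k}(\mathbb{C})\to L^{2r-1}(m)$ is equivalent to the paper's lift of $\mathbb{C}P^{n-k}\to\mathbb{C}P^\infty$ through the $(2(n-k)+1)$-connected map $\omega$; the paper's route even gives an $S^1$-equivariant map. (Your ``concrete'' shifted-block frame is not orthonormal and needs Gram--Schmidt, which is $S^1$-equivariant.)

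\textbf{The secondary obstruction is not handled.} Your primary obstruction $c_{n-k+1}(n\lambda)=\binom{n}{k-1}\alpha^{r-1}$ is right, but the argument stops there.
\begin{itemize}
\item Since $n-k\ge 2$ is even, $V_{r,2}(\mathbb{C})\to S^{2r-1}$ has a section, so $\pi_{2r-2}(V_{n,k}(\mathbb{C}))\cong\pi_{2r-2}(V_{r,2}(\mathbb{C}))\cong\mathbb{Z}/2$. The ``nothing to do'' case never occurs, and the obstruction lives in $H^{2r-1}(L^{2r-1}(m);\mathbb{Z}/2)\cong\mathbb{Z}/2$.
\item The freedom you propose to use is parametrized by $H^{2r-3}(L^{2r-1}(m);\mathbb{Z})$. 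This group is $0$, because odd-degree integral cohomology of a lens space vanishes below the top degree.
\item Hence, once the primary obstruction vanishes, the section on the $(2r-3)$-skeleton is unique up to homotopy. The secondary obstruction is then a single well-defined element of $\mathbb{Z}/2$, and you must show it is zero; there is nothing to vary.
\item Saying you ``expect'' it to be controlled by $\binom{n}{k}$ is not an argument. Computing it requires the second $k$-invariant of the fibration, i.e.\ a secondary cohomology operation, and that is exactly the non-formal input.
\item You also misplace the role of $\binom{n}{1}$. It does not enter the primary obstruction; it is needed to reduce the structure group to $\mathbb{SU}$, which is what makes the secondary obstruction tractable.
\end{itemize}

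\textbf{How the paper closes this step.} It uses a citation, in four moves.
\begin{enumerate}
\item Apply the $r=n-k+1$ case to $V_{n,k-1}(\mathbb{C})$ (note $n-(k-1)+1=r$). This gives $n\lambda\cong E'\oplus\epsilon^{k-1}$ with $E'$ of rank $n-k+1$.
\item Since $c_1(n\lambda)=n\alpha=0$, $E'$ lifts to $B\mathbb{SU}(n-k+1)$.
\item By \cite[Corollary 5.12]{AGMP99}, it lifts further to $B\mathbb{SU}(n-k)$, because both $c_{n-k}(n\lambda)=\binom{n}{k}\alpha^{n-k}$ and $c_{n-k+1}(n\lambda)=\binom{n}{k-1}\alpha^{n-k+1}$ vanish.
\item Lemma~\ref{trivilsubbunlemma} then converts $n\lambda\cong E\oplus\epsilon^{k}$ into an equivariant map.
\end{enumerate}
That corollary is precisely where the $\mathbb{Z}/2$-obstruction in $H^{2r-1}(L;\pi_{2r-2}(S^{2r-3}))$ is disposed of. To repair your proof, you need either this input or an actual computation of the secondary obstruction.
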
 

As remarked in Example \ref{n-k+3ex}, Astey-Gitler-Micha-Pastor \cite{AGMP99} show that in the case $r=n-k+3$ there exists no $\mathbb{Z}/m$-equivariant map $S^{2(n-k+3)-1}\larrow V_{n,k}(\mathbb{C})$ under certain conditions on $n$, $k$, and $m$.

The paper is organized as follows. 
In Section \ref{hnkrsec}, we introduce the Hurewicz indices of complex Stiefel manifolds and prove Theorem \ref{mainthmAW}, based on the work of Atiyah-Todd and Adams-Walker. These numbers are nonequivariant degrees and will be used in the equivariant setting.
In Section \ref{cohomsec}, we study the integral cohomology ring of the complex projective Stiefel manifold $PV_{n,k}(\mathbb{C})$ based on the work of Astey-Gitler-Micha-Pastor. 
In Section \ref{condisec}, we deduce numerical consequences from the existence of a $\mathbb{Z}/m$-equivariant map from spheres to complex Stiefel manifolds.
Section \ref{generalsec} is devoted to proving Theorem \ref{mainthm} by studying a homotopy lifting problem. In particular, we construct new equivariant maps from a given one using obstruction theory.
We then consider two special cases in Sections \ref{case1sec} and \ref{case2sec}, respectively, and prove Theorem \ref{mainthmex}. We end this paper with an open problem.

$\, $

\noindent{\em Acknowledgements.}
\rev{The authors are grateful to the referee for careful reading and helpful comments.}

Haibao Duan was partially supported by National Natural Science Foundation of China (Grant no. 12331003).

Ruizhi Huang was supported in part by the National Natural Science Foundation of China (Grant nos. 12331003 and 12288201), the National Key R\&D Program of China (No. 2021YFA1002300).
\section{Hurewicz indices of complex Stiefel manifolds}\label{hnkrsec}
In this section, we consider the degrees of maps from spheres to complex Stiefel manifolds before proceeding to the equivariant setting in the sequel. This is equivalent to considering a numerical invariant of complex Stiefel manifolds, defined as follows. It will play an important role in the study of equivariant maps from spheres to complex Stiefel manifolds.

Let $f: S^{2r-1}\larrow V_{n,k}(\mathbb{C})$ be a map with $n-k+1\leq r\leq n$. It induces a ring homomorphism \cite{Borel53}
\[\label{cohomvnkeq}
\Lambda(x_{2(n-k+1)-1},\ldots,x_{2n-1}) \cong H^\ast(V_{n,k}(\mathbb{C});\mathbb{Z})\larrow 
H^\ast(S^{2r-1};\mathbb{Z}) \cong  \Lambda (\omega _{S^{2r-1}}),
\]
where $x_{2i-1}$ and $\omega _{S^{2r-1}}$ have degrees $2i-1$ and $2r-1$, respectively. Since the rational cohomology
\[
H^\ast(V_{n,k}(\mathbb{C});\mathbb{Q})\cong \Lambda_{\mathbb{Q}}(x_{2(n-k+1)-1},\ldots,x_{2n-1})
\]
endowed with the trivial differential is a minimal Sullivan model of $V_{n,k}(\mathbb{C})$, it follows that $V_{n,k}(\mathbb{C})$ is rationally formal, and each generator $x_{2r-1}$ is rationally spherical. Accordingly, there is a $\mathbb{Z}$-summand in the homotopy group $\pi_{2r-1}( V_{n,k}(\mathbb{C}))$ for each $n-k+1\leq r\leq n$. \rev{Choose a generator $1$ of this $\mathbb{Z}$-summand. The absolute value of the Hurewicz image of $1$ in}
\[
\mathbb{Z}\hookrightarrow \pi_{2r-1}(V_{n,k}(\mathbb{C}))\stackrel{hur}{\larrow} H_{2r-1}(V_{n,k}(\mathbb{C}))
\]
\rev{determines a positive integer} $h_{n,k}(r)$, called \textit{the $\mathit{r}$-th Hurewicz index of $V_{n,k}(\mathbb{C})$}. 
\begin{lemma}\label{degfhlemma}
Let $f: S^{2r-1}\larrow V_{n,k}(\mathbb{C})$ be a map with $n-k+1\leq r\leq n$. Then 
\[
h_{n,k}(r)~|~{\rm deg}(f). 
\]
Furthermore, there exists a map $S^{2r-1}\larrow V_{n,k}(\mathbb{C})$ whose degree is $h_{n,k}(r)$.
\end{lemma}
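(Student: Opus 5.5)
The proof works entirely through the Hurewicz homomorphism. The key observation is that the torsion part of $\pi_{2r-1}(V_{n,k}(\mathbb{C}))$ maps to zero in the torsion-free group $H_{2r-1}(V_{n,k}(\mathbb{C}))$.

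First we relate degree to the Hurewicz map. Let $\iota\in H_{2r-1}(S^{2r-1})$ be the fundamental class and let $x^\vee_{2r-1}$ denote the class dual to $x_{2r-1}$. By Borel's description the homology $H_{2r-1}(V_{n,k}(\mathbb{C}))$ is free abelian, with a basis dual to the monomials of degree $2r-1$ in the exterior algebra, and $x^\vee_{2r-1}$ is one of these basis elements. Universal coefficients then give
\[
\deg(f)=\langle f^\ast x_{2r-1},\iota\rangle=\langle x_{2r-1}, f_\ast\iota\rangle=\langle x_{2r-1}, hur([f])\rangle .
\]
So $\deg(f)$ is the $x^\vee_{2r-1}$-coordinate of $hur([f])$.

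Second, we show that this coordinate is the whole story. The image of $hur$ consists of spherical classes, and these pair trivially with decomposable cohomology classes. Indeed, if $y=y_1y_2$ with both factors of positive degree, then $f^\ast y=f^\ast y_1\cdot f^\ast y_2=0$ in $H^\ast(S^{2r-1})$. Hence $hur([f])$ is an integer multiple of $x^\vee_{2r-1}$, and that multiple is $\deg(f)$.

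Third, we determine the image of $hur$. Since $\pi_{2r-1}(V_{n,k}(\mathbb{C}))$ is finitely generated with rank one (by the rational computation recalled above), we can write $\pi_{2r-1}=\mathbb{Z}\oplus T$ with $T$ finite. The restriction $hur|_T$ lands in a torsion-free group and is therefore zero. So $hur(\pi_{2r-1})=hur(\mathbb{Z})$, the subgroup generated by the image of the chosen generator $1$. By the second step this image equals $\pm h_{n,k}(r)\,x^\vee_{2r-1}$.

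Combining the steps: for every $f$, $hur([f])=a\,h_{n,k}(r)\,x^\vee_{2r-1}$ for some integer $a$, so $h_{n,k}(r)$ divides $\deg(f)$. For the existence claim, take $f$ to represent $\pm 1$, choosing the sign so that its image is $+h_{n,k}(r)\,x^\vee_{2r-1}$. Precomposing with a reflection of $S^{2r-1}$ also changes the sign if needed. This $f$ has degree exactly $h_{n,k}(r)$.

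A remaining point is that $h_{n,k}(r)$ must be nonzero, so that it really is a positive integer. This follows from rational sphericity of $x_{2r-1}$: rationally the Hurewicz map is nonzero on the $\mathbb{Z}$-summand. I expect the main obstacle to be stating cleanly that the definition of $h_{n,k}(r)$ does not depend on the choice of splitting $\mathbb{Z}\oplus T$. The third step handles this, since any two choices of generator differ by torsion and a sign, and neither changes the absolute value of the Hurewicz image.
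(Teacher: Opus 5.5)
Your proof is correct and takes essentially the same route as the paper. Both use naturality of the Hurewicz homomorphism to read off $\deg(f)=\pm t\cdot h_{n,k}(r)$, where $t$ is the component of $[f]$ on the $\mathbb{Z}$-summand, and both get the existence claim by taking $t=\pm1$ with the appropriate sign. You also supply details the paper leaves implicit: the torsion part of $\pi_{2r-1}(V_{n,k}(\mathbb{C}))$ dies in the torsion-free group $H_{2r-1}(V_{n,k}(\mathbb{C}))$; spherical classes pair trivially with decomposables, so the Hurewicz image lies on the line dual to $x_{2r-1}$ (which is what makes the ``absolute value'' in the definition of $h_{n,k}(r)$ meaningful); and $h_{n,k}(r)$ is nonzero and independent of the chosen splitting.
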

\begin{proof}
The naturality of Hurewicz homomorphism implies a commutative diagram 
\begin{gather*}
	\begin{aligned}
		\xymatrix{
		\pi_{2r-1}(S^{2r-1})\ar[r]^{f_\ast} \ar[d]^{hur}_{\cong}  & \pi_{2r-1}(V_{n,k}(\mathbb{C})) \ar[d]^{hur} \\
		H_{2r-1}(S^{2r-1})\ar[r]^{f_\ast}  & 
		H_{2r-1}(V_{n,k}(\mathbb{C})). 
		}
	\end{aligned}
\end{gather*}
Then ${\rm deg}(f)=\pm t\cdot h_{n,k}(r)$ where $t\in\mathbb{Z}\subseteq\pi_{2r-1}(V_{n,k}(\mathbb{C}))$ represents the component of the homotopy class of $f$ on the $\mathbb{Z}$-summand. In particular, when $t=\pm 1$, ${\rm deg}(f)= \pm h_{n,k}(r)$. \rev{Choosing the representative whose Hurewicz image has the sign compatible with the chosen cohomology generator gives a map of degree $h_{n,k}(r)$.}
\end{proof}

Note that Lemma \ref{degfhlemma} proves the first part of Theorem \ref{mainthm}. 
The following lemma reduces the computations of Hurewicz indices to the cases of top degree. 
\begin{lemma}\label{redhurnohlemma}
	Let $n-k+1\leq r\leq n$. 
\[
h_{n,k}(r)=h_{r,k-n+r} (r).
\]
\end{lemma}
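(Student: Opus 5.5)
We prove this by comparing $V_{n,k}(\mathbb{C})$ with a smaller Stiefel manifold through the projection that forgets frame vectors. Set $j:=n-r$, so that $k-n+r=k-j$ and $0\le j\le k-1$. The natural fibration
\[
V_{r,k-j}(\mathbb{C})\stackrel{i}{\larrow} V_{n,k}(\mathbb{C})\stackrel{q}{\larrow} V_{n,j}(\mathbb{C})
\]
has fibre inclusion $i$, which completes a $(k-j)$-frame in $\mathbb{C}^r\subseteq\mathbb{C}^n$ by the last $j$ standard basis vectors, and projection $q$ onto those last $j$ vectors. The base $V_{n,j}(\mathbb{C})$ is $(2(n-j)+1-1)=2r$-connected, since $V_{n,j}(\mathbb{C})$ is $2(n-j)$-connected.

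The first step is to use this connectivity. From the long exact homotopy sequence, $i_\ast:\pi_{2r-1}(V_{r,k-j}(\mathbb{C}))\to\pi_{2r-1}(V_{n,k}(\mathbb{C}))$ is an isomorphism, because $\pi_{2r}$ and $\pi_{2r-1}$ of the base vanish. By the Serre spectral sequence, or simply by cellular approximation since the base is $2r$-connected, $i_\ast$ is also an isomorphism on $H_{2r-1}$. Hurewicz is natural, so the square formed by $i_\ast$ on $\pi_{2r-1}$, $i_\ast$ on $H_{2r-1}$, and the two Hurewicz maps commutes. Both horizontal maps are isomorphisms, and the $\mathbb{Z}$-summand goes to a $\mathbb{Z}$-summand: the torsion subgroup is canonical, and the free quotient maps isomorphically. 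The Hurewicz image of a generator of the free part is only defined up to the choice of splitting, but the torsion part maps to torsion in $H_{2r-1}$, which is free. It follows that the absolute values agree, and $h_{n,k}(r)=h_{r,k-j}(r)$.

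The main point to check is compatibility with the cohomology generators: we need $i^\ast(x_{2r-1})=\pm x_{2r-1}$, so that degrees (equivalently Hurewicz indices measured against $x_{2r-1}$) coincide. For this we use Borel's description. The generators $x_{2i-1}$ are the classes pulled back or transgressed from the suspension of $\mathbb{C}P^{n-1}$ in $SU(n)$, and they are natural under the standard inclusions $V_{r,k-j}\to V_{n,k}$. Alternatively, in degrees $\le 2r-1$ both spaces have free cohomology, $H_{2r-1}\cong\mathbb{Z}$ is spanned by the dual of $x_{2r-1}$ (the lower generators cannot contribute in this degree because the top one has degree $2r-1$ in the fibre), and $i^\ast$ is an isomorphism $H^{2r-1}\cong H^{2r-1}$ by the connectivity above. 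This forces $i^\ast(x_{2r-1})=\pm x_{2r-1}$.

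Finally we transfer the degree sets. Any map $f: S^{2r-1}\to V_{n,k}(\mathbb{C})$ is homotopic to $i\circ g$ with $\deg g=\pm\deg f$, and conversely. Combined with Lemma~\ref{degfhlemma}, this gives the equality of the least positive degrees, which is exactly the claimed identity $h_{n,k}(r)=h_{r,k-n+r}(r)$.
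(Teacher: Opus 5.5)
Your argument is correct and is essentially the paper's: naturality of the Hurewicz square for a fibre inclusion that is an isomorphism on $\pi_{2r-1}$ and $H_{2r-1}$. The only difference is that you do it in one step, via $V_{r,k-n+r}(\mathbb{C})\to V_{n,k}(\mathbb{C})\to V_{n,n-r}(\mathbb{C})$ with $2r$-connected base, whereas the paper iterates the sphere bundle $V_{n-1,k-1}(\mathbb{C})\to V_{n,k}(\mathbb{C})\to S^{2n-1}$ a total of $(n-r)$ times. One caution: your ``alternative'' claim that $H_{2r-1}\cong\mathbb{Z}$ is false in general (e.g.\ both $x_9$ and $x_1x_3x_5$ lie in $H^9(V_{5,5}(\mathbb{C}))$). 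So rely on the naturality of Borel's generators instead, or simply note that the index is the divisibility of the Hurewicz image of a generator of $\pi_{2r-1}/\mathrm{tors}$ (spherical classes pair trivially with decomposables); any isomorphism preserves this, so the generator check is not needed.
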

\begin{proof}
\rev{If $n=1$, then $k=r=1$ and the assertion is immediate. We henceforth assume $n>1$.}
Consider the canonical fibre bundle
\[
V_{n-1,k-1}(\mathbb{C})\larrow V_{n,k}(\mathbb{C})\stackrel{q}{\larrow} S^{2n-1}.
\]
It implies that
\[
\pi_{\rev{i}}(V_{n-1,k-1}(\mathbb{C}))\cong \pi_{\rev{i}}(V_{n,k}(\mathbb{C}))
\]
for each $\rev{i}\leq 2n-3$.
Then the naturality of Hurewicz homomorphism implies the commutative diagram 
\begin{gather*}
\begin{aligned}
\xymatrix{
\pi_{2r-1}(V_{n-1,k-1}(\mathbb{C}))\ar[r]^<<<<{\cong} \ar[d]^{hur}  &\pi_{2r-1}(V_{n,k}(\mathbb{C})) \ar[d]^{hur} \\
H_{2r-1}(V_{n-1,k-1}(\mathbb{C}))\ar[r]^<<<<{\cong}     & H_{2r-1}(V_{n,k}(\mathbb{C})),  
}
\end{aligned}
\label{hurredudia}
\end{gather*}
for any $n-k+1\leq r\leq n-1$. It follows that $h_{n,k}(r)=h_{n-1, k-1}(r)$. 
\rev{Fix $r\in \{n-k+1,\ldots,n\}$. If $r=n$, then the lemma holds trivially. Otherwise, repeating the preceding argument $(n-r)$ times gives $h_{n,k}(r)=h_{r,k-n+r}(r)$.}
\end{proof}

The computation of the Hurewicz index $h_{n,k}(r)$ can be alternatively described by a lifting problem
\begin{gather}
\begin{aligned}
\xymatrix{
& V_{r,r-(n-k)}(\mathbb{C}) \ar[d]^{q}  \\
S^{2r-1} \ar[r]_{ \lambda } \ar@{-->}[ru]  &S^{2r-1},
}
\end{aligned}
\label{indexliftprodia}
\end{gather}
where the map $q$ is the complex Stiefel fibration with fibre $V_{r-1,r-(n-k)-1}(\mathbb{C})$, and $\lambda$ is a degree $\lambda$ self-map of the sphere $S^{2r-1}$. 
\begin{lemma}\label{h=liftlemma}
The Hurewicz index $h_{n,k}(r)$ is the smallest positive integer $\lambda$ that admits a lift in Diagram \eqref{indexliftprodia}.
\end{lemma}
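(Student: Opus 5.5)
By Lemma~\ref{redhurnohlemma} we have $h_{n,k}(r)=h_{r,k-n+r}(r)$. So it suffices to work with the top-dimensional index of $V:=V_{r,r-(n-k)}(\mathbb{C})$, where the degree $2r-1$ is the top generator, pulled back along $q\colon V\to S^{2r-1}$. The plan is to compare two sets of integers: the degrees of maps $g\colon S^{2r-1}\to V$, and the degrees $\lambda$ for which a lift of $\lambda$ through $q$ exists in Diagram~\eqref{indexliftprodia}. Once the two sets agree, Lemma~\ref{degfhlemma} (applied to $V$ with $n,k$ replaced by $r,r-(n-k)$) shows that the smallest positive element is $h_{r,k-n+r}(r)=h_{n,k}(r)$.

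First, recall from Borel's computation that the top generator $x_{2r-1}\in H^{2r-1}(V)$ can be taken to be $q^\ast(\omega_{S^{2r-1}})$. This is seen from the Serre spectral sequence of the fibration $V_{r-1,r-(n-k)-1}(\mathbb{C})\to V\xrightarrow{q} S^{2r-1}$, which collapses; the base class then gives the generator in degree $2r-1$, because the fibre has no cohomology in degree $2r-1$ (its top generator sits in degree $2r-3$). Granting this, any $g\colon S^{2r-1}\to V$ satisfies $g^\ast(x_{2r-1})=(q\circ g)^\ast(\omega_{S^{2r-1}})$. Hence $\deg(g)=\deg(q\circ g)$, where the second degree is the Brouwer degree of a self-map of $S^{2r-1}$.

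Now the two directions follow. If $\lambda$ admits a lift $g$, meaning $q\circ g\simeq\lambda$ (or equals it on the nose), then $\deg(g)=\lambda$, so $\lambda$ lies in $D(S^{2r-1},V)$. Conversely, given any $g$ of degree $\lambda$, the composite $q\circ g$ has degree $\lambda$ and is therefore homotopic to the standard degree-$\lambda$ map, since self-maps of spheres are classified by degree. Because $q$ is a fibration, the homotopy lifting property lets us deform $g$ to a strict lift of $\lambda$. Thus the set of liftable $\lambda$ coincides with $D(S^{2r-1},V)$, and by Lemma~\ref{degfhlemma} its least positive element is $h_{r,k-n+r}(r)$.

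The only point requiring care is the identification $x_{2r-1}=\pm q^\ast\omega$, including the sign. The sign convention is fixed as in the proof of Lemma~\ref{degfhlemma}, by choosing the generator compatibly. The edge case $r-(n-k)=1$, where $V=S^{2r-1}$ and $q$ is the identity, is trivial and gives index $1$.
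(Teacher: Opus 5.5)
Your proof is correct and follows the paper's route: the collapsing Serre spectral sequence of $q$ identifies $x_{2r-1}$ with $q^\ast(\omega_{S^{2r-1}})$, so a lift of $\lambda$ has degree $\lambda$, and Lemmas~\ref{degfhlemma} and~\ref{redhurnohlemma} finish the argument. You also spell out the converse (a map of degree $\lambda$ can be deformed to a strict lift by homotopy lifting), which the paper leaves implicit.

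One correction: your parenthetical is false, since the fibre can have cohomology in degree $2r-1$. For example, $x_1x_3x_5\in H^9(\mathbb{U}(4))$ for the fibration $\mathbb{U}(4)\to\mathbb{U}(5)\to S^9$. This does no harm, because such classes are decomposable and any map from a sphere pulls them back to zero. So the identification $x_{2r-1}\equiv \pm q^\ast\omega_{S^{2r-1}}$ modulo decomposables, which is all you actually use, follows from the collapse alone.
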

\begin{proof}
	Consider the canonical fibre bundle
	\[
	V_{r-1,r-(n-k)-1}(\mathbb{C})\larrow V_{r,r-(n-k)}(\mathbb{C})\stackrel{q}{\larrow} S^{2r-1}.
	\]
Since the Serre spectral sequence of the bundle collapses, we have $q^\ast(\omega _{S^{2r-1}})=x_{2r-1}$ on the cohomology generators. Therefore, a lift $f:S^{2r-1}\larrow V_{r,r-(n-k)}(\mathbb{C})$ in Diagram \eqref{indexliftprodia} implies that ${\rm deg}(f)={\rm deg}(\lambda)=\lambda$. The lemma then follows from Lemmas \ref{degfhlemma} and \ref{redhurnohlemma}.
	\end{proof}

For later use, we are particularly interested in the case when $h_{n,k}(r)=1$. In terms of the lifting problem \eqref{indexliftprodia}, this case was studied by James \cite{James58} and Atiyah-Todd \cite{AT60}, and was solved by Adams-Walker \cite{AW65}. Recall that the Atiyah-Todd number $M_k$ is defined in \eqref{AW-eq}.
\begin{theorem}[\rev{Adams}-Walker \cite{AW65}]\label{awthm}
The lifting problem \eqref{indexliftprodia} has a solution for $\lambda=1$ if and only if 
$\label{sectionequicond}
M_{r-(n-k)}~|~r
$.
\end{theorem}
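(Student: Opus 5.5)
This is the classical theorem of Adams--Walker, and the plan is to reconstruct its proof by reducing the lifting problem in \eqref{indexliftprodia} to a question about the $J$-group of a complex projective space. Write $j=r-(n-k)$, so the question is whether the Stiefel bundle $q\colon V_{r,j}(\mathbb{C})\larrow S^{2r-1}$ admits a cross-section.

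\textbf{Step 1: James's reduction to a stunted projective space.} By James \cite{James58}, the $(4r-2j)$-skeleton of $V_{r,j}(\mathbb{C})$, up to dimension $2r-1$ and suitably suspended, is the stunted projective space
\[
\Sigma\,\mathbb{C}P^{r-1}/\mathbb{C}P^{r-j-1}.
\]
Under this identification, $q$ corresponds to collapsing onto the top cell. James's intrinsic argument then shows that $q$ has a section if and only if this top cell splits off stably. In other words, we need exactly that $\mathbb{C}P^{r-1}/\mathbb{C}P^{r-j-1}$ is \emph{coreducible}. The dimension range is the one where stable and unstable statements agree.

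\textbf{Step 2: Translation into $J$-theory.} The stunted space $\mathbb{C}P^{r-1}/\mathbb{C}P^{r-j-1}$ is the Thom space of $(r-j)H$ over $\mathbb{C}P^{j-1}$, where $H$ is the Hopf line bundle. By Atiyah duality, its coreducibility is equivalent to $S$-reducibility of the Thom space of the dual bundle, which is $-rH$ up to the tangent bundle correction $T\mathbb{C}P^{j-1}\oplus\mathbb{C}\cong jH$. Reducibility of that Thom space is equivalent to the vanishing of $J(rH)$, or equivalently $r\,J(H)=0$, in $\widetilde{J}(\mathbb{C}P^{j-1})$. The theorem therefore reduces to the following claim:
\[
\text{the order of } J(H) \text{ in } \widetilde{J}(\mathbb{C}P^{j-1}) \text{ equals } M_j .
\]

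\textbf{Step 3: Computing the order of $J(H)$.} This splits into a lower and an upper bound.
\begin{itemize}
\item \emph{Lower bound (Atiyah--Todd).} Suppose $rH$ is fibre-homotopically trivial. Apply the Adams operations $\psi^\ell$ together with the Thom isomorphism in $K$-theory, i.e.\ the cannibalistic classes and $e$-invariants. Writing $x=H-1$, this forces divisibility conditions on the coefficients of $\bigl(\log(1+x)/x\bigr)^{r}$ in $\mathbb{Q}[x]/(x^{j})$, localized at each prime $p$. A $p$-adic analysis of these coefficients yields $p^{\nu_p(M_j)}\mid r$, with the valuation exactly as in \eqref{AW-eq}.
\item \emph{Upper bound (Adams--Walker).} We show that $M_j H$ is $J$-trivial. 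Following Adams's programme for $J(X)$, we prove the relevant case of the Adams conjecture: $\ell^{e}(\psi^\ell-1)y$ lies in $\ker J$ for suitable $e$. Adams--Walker establish this directly for $\mathbb{C}P^{j-1}$, using sums of line bundles and sphere-bundle constructions. It then remains to check, prime by prime, that the subgroup generated by the $(\psi^\ell-1)$-images already contains $M_jH$. This is again a $p$-adic computation with the binomial expansions of $\psi^\ell(1+x)=(1+x)^\ell$.
\end{itemize}

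\textbf{Main obstacle.} The main difficulty is the upper bound in Step 3, namely producing fibre-homotopy trivializations. Steps 1 and 2 are formal, and the lower bound is a finite $K$-theory computation. I would first try to handle the upper bound $p$-locally, reducing to $J$-triviality of bundles of the form $\psi^\ell$-images realized by line bundles $H^{\ell}$ and their sums. The second obstacle is matching the combinatorics precisely with the maximum of $s+\nu_p(s)$ in \eqref{AW-eq}; this is a delicate but elementary number-theoretic verification.
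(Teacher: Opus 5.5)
The paper does not prove this statement: it is quoted from Adams--Walker \cite{AW65}, building on James \cite{James58} and Atiyah--Todd \cite{AT60}, and used only as a black box in the proof of Theorem~\ref{mainthmAW}. Your outline follows the route of that literature:
\begin{itemize}
\item James's reduction to the stunted space $\mathbb{C}P^{r-1}/\mathbb{C}P^{r-j-1}$;
\item Atiyah duality, which turns $S$-coreducibility into $r\,J(H)=0$ in $\widetilde{J}(\mathbb{C}P^{j-1})$;
\item the Atiyah--Todd $K$-theoretic integrality bound for necessity;
\item the theorem that $\ell^e(\psi^\ell-1)$ of a combination of line bundles is $J$-trivial, for sufficiency. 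This input is Adams's theorem from $J(X)$~I, which Adams--Walker invoke, not something they prove.
\end{itemize}
Steps~2 and~3 are acceptable as a sketch.

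The genuine gap is in Step~1, where you assert that one is automatically in the range where stable and unstable statements agree. That is false.
\begin{itemize}
\item The primitive cells of $V_{r,j}(\mathbb{C})$ lie in dimensions $2(r-j)+1,\dots,2r-1$, and the lowest product cell has dimension $4(r-j)+4$.
\item Hence $\Sigma\mathbb{C}P^{r-1}/\mathbb{C}P^{r-j-1}$ is the $(4(r-j)+3)$-skeleton, not the $(4r-2j)$-skeleton. It contains the $(2r-1)$-skeleton only when $r\geq 2j-2$.
\item A stable coreduction destabilizes to a map $S^{2r-1}\to\Sigma\mathbb{C}P^{r-1}/\mathbb{C}P^{r-j-1}$ only for roughly $r\geq 2j$.
\end{itemize}
For $r=j$, i.e.\ $U(r)\to S^{2r-1}$ with $r\geq 3$, both conditions fail; the cell $e^1e^3$ of $U(r)$ lies below $e^{2r-1}$.

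$J$-theory only ever yields $S$-coreducibility. The easy implication ``coreducible $\Rightarrow$ section'' (compose a coreduction with the skeleton inclusion, then use homotopy lifting) needs an unstable coreduction. So your ``if'' direction, as written, does not produce a section.

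The repair is easy but has to be stated.
\begin{itemize}
\item \emph{``If'' direction.} If $M_j\mid r$ and $j\geq 3$, then $r\geq M_j\geq 2j$, because $\nu_2(M_j)\geq j-1$ and $M_3=M_4=24$. So you are in the stable range and the argument goes through. For $j\leq 2$, write a section by hand, e.g.\ $z\mapsto(-\bar z_2,\bar z_1,-\bar z_4,\bar z_3,\dots)$ when $j=2$ and $r$ is even.
\item \emph{``Only if'' direction.} Either quote James's unstable result (section $\Rightarrow$ coreducible $\Rightarrow$ $S$-coreducible), or stabilize by block sums. A section of $V_{r,j}(\mathbb{C})\to S^{2r-1}$ induces one of $V_{tr,j}(\mathbb{C})\to S^{2tr-1}$. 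For large $t$ this compresses into the stunted projective space by cellular approximation, giving $M_j\mid tr$ for all large $t$, hence $M_j\mid r$.
\end{itemize}
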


\begin{proof}[Proof of Theorem \ref{mainthmAW}]
By Lemma \ref{h=liftlemma}, the Hurewicz index $h_{n,k}(r)=1$ if and only if the lifting problem \eqref{indexliftprodia} has a solution for $\lambda=1$. By Theorem \ref{awthm}, the latter is equivalent to 
$
M_{r-(n-k)}~|~r
$.
\end{proof}

\begin{corollary}\label{1-3hnkrcorollary}
$h_{n,k}(r)=1$ if
\begin{itemize}
\item[(1)] $r=n-k+1$,
\item[(2)] $r=n-k+2$ is even, 
\item[(3)] or $r=n-k+3$ and $24~|~r$.
\end{itemize}
\end{corollary}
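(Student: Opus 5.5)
The plan is to apply Theorem~\ref{mainthmAW}: $h_{n,k}(r)=1$ exactly when $M_{r-(n-k)}\mid r$. Write $j:=r-(n-k)$. In the three cases $j=1,2,3$ respectively, so the whole proof comes down to reading off $M_1$, $M_2$, $M_3$ from the definition \eqref{AW-eq}, as recorded in Table~\ref{ATtable}, and checking the divisibility.

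\emph{Case (1).} Here $j=1$. In \eqref{AW-eq} the condition $p\leq k$ with $k=1$ is never satisfied, so every valuation vanishes and $M_1=1$. Then $M_1\mid r$ trivially, giving $h_{n,k}(n-k+1)=1$.

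Alternatively, by Lemma~\ref{h=liftlemma} one can argue directly. With $j=1$ the fibration in \eqref{indexliftprodia} is $V_{r,1}(\mathbb{C})=S^{2r-1}\to S^{2r-1}$, which is the identity, so $\lambda=1$ obviously lifts.

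\emph{Case (2).} Here $j=2$. Only $p=2$ contributes, with $s$ ranging over $1\leq s\leq \lfloor 1/1\rfloor=1$. This gives $\nu_2(M_2)=1+\nu_2(1)=1$, so $M_2=2$, and the condition $2\mid r$ is exactly the hypothesis that $r$ is even.

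Geometrically this is the familiar statement that $V_{r,2}(\mathbb{C})\to S^{2r-1}$ has a section iff $r$ is even. For even $r$ the section is $v\mapsto (v,Jv)$, using a quaternionic-type structure on $\mathbb{C}^{r}$.

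\emph{Case (3).} Here $j=3$.
\begin{itemize}
\item For $p=2$, $s$ ranges over $1\leq s\leq\lfloor 2/1\rfloor=2$. The values $s+\nu_2(s)$ are $1$ and $3$, so $\nu_2(M_3)=3$.
\item For $p=3$, $s$ ranges over $1\leq s\leq \lfloor 2/2\rfloor=1$, so $\nu_3(M_3)=1$.
\item For $p\geq 5$ we have $p>3$, so the valuation is $0$.
\end{itemize}
Hence $M_3=2^3\cdot 3=24$, and $24\mid r$ gives $h_{n,k}(r)=1$.

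There is no real obstacle here. The only point needing care is evaluating the floor $\lfloor (k-1)/(p-1)\rfloor$ correctly in \eqref{AW-eq}, so that the values agree with Table~\ref{ATtable}; the result is then an immediate consequence of Theorem~\ref{mainthmAW}.
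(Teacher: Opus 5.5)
Your proof is correct and matches the paper's own argument: the paper invokes Theorem~\ref{mainthmAW} with $r-(n-k)=1,2,3$ and reads $M_1=1$, $M_2=2$, $M_3=2^3\cdot 3=24$ off Table~\ref{ATtable}, and your evaluation of these values from \eqref{AW-eq} is accurate. Your geometric side remarks for cases (1) and (2) are also correct but not needed: $V_{r,1}(\mathbb{C})=S^{2r-1}$ with $q$ the identity, and the section $v\mapsto(v,Jv)$ for even $r$.
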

\begin{proof}
The corollary follows from Theorem \ref{mainthmAW} and Table \ref{ATtable}.
\end{proof}

\section{The integral cohomology of $PV_{n,k}(\mathbb{C})$}\label{cohomsec}

Recall the standard free action of $S^1$ on the complex Stiefel manifold $V_{n,k}(\mathbb{C})$ is defined by $z(v_1,\ldots,v_k)=(zv_1,\ldots, zv_k)$ for any complex number $z\in S^1$ and any orthonormal complex $k$-frame $(v_1,\ldots,v_k)$ in $\mathbb{C}^n$. The orbit manifold of this $S^1$-action is the complex projective Stiefel manifold $PV_{n,k}(\mathbb{C})$, and there is the principal circle bundle
\begin{equation}\label{pvnkdefeq}
S^1\larrow V_{n,k}(\mathbb{C})\stackrel{p_{n,k}}{\larrow}PV_{n,k}(\mathbb{C}).
\end{equation}
In this section, we investigate the integral cohomology ring of $PV_{n,k}(\mathbb{C})$. It will be used to prove Theorem \ref{mainthm} in the sequel.

We begin with some basic facts summarized in \cite[Section~2]{AGMP99} by Astey-Gitler-Micha-Pastor. 
Let $\mathbb{U}(n)$ be the $n$-th unitary group. 
There is a homotopy pullback diagram
\begin{gather}
\begin{aligned}
\xymatrix{
V_{n,k}(\mathbb{C}) \ar@{=}[r]  \ar[d]^{p_{n,k}}  & V_{n,k}(\mathbb{C}) \ar[d] \\
PV_{n,k}(\mathbb{C}) \ar[r]^{f} \ar[d]^{\omega}   & B\mathbb{U}(n-k)\ar[d]^{Bi}\\
\mathbb{CP}^{\infty} \ar[r]^{f_0}   & B\mathbb{U}(n),
}
\end{aligned}
\label{pvnkdefdiag}
\end{gather}
where the fibre bundles in the left and right columns are induced from (\ref{pvnkdefeq}) and the canonical principal bundle $\mathbb{U}(n-k)\stackrel{i}{\larrow}\mathbb{U}(n)\stackrel{}{\larrow}V_{n,k}(\mathbb{C})$ respectively, the map $f_0$ classifies the sum of $n$ copies of the Hopf line bundle over $\mathbb{C}P^{\infty}$, and the map $f$ classifies the bundle whose fibre over a point $\nu$ in $PV_{n,k}(\mathbb{C})$ is the orthogonal complement in $\mathbb{C}^n$ of the subspace generated by a $k$-frame representing $\nu$.

For the Serre spectral sequence of the bundle in the right column of Diagram \eqref{pvnkdefdiag}, it is known that
the transgression
\[
\tau(x_{2r-1})=c_r \in H^{2r}(B\mathbb{U}(n);\mathbb{Z}),
\]
where $c_r$ is the $r$-th universal Chern class. 
By the naturality of transgression, this implies that, for the Serre spectral sequence of the bundle in the left column of Diagram \eqref{pvnkdefdiag}, 
\begin{equation}\label{tran-w-eq}
\tau(x_{2r-1})={n\choose r} u^r,
\end{equation}
where the generator $u\in H^2(\mathbb{C}P^{\infty};\mathbb{Z})$ is the first Chern class of the Hopf line bundle. The following lemma describes the rational cohomology ring of $PV_{n,k}(\mathbb{C})$.
\begin{lemma}\label{qcohompvnklemma}
There is a graded ring isomorphism
\[
H^{\ast}(PV_{n,k}(\mathbb{C});\mathbb{Q})\cong \Lambda(x_{2(n-k+2)-1}, \ldots, x_{2n-1})\otimes \mathbb{Q}[\omega]/(\omega^{n-k+1}),
\]
where for simplicity of notation we denote by $\omega=\omega^\ast(u)\in H^2(PV_{n,k}(\mathbb{C});\mathbb{Z})$ the Euler class of (\ref{pvnkdefeq}). ~$\qqed$
\end{lemma}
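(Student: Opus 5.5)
The plan is to compute $H^\ast(PV_{n,k}(\mathbb{C});\mathbb{Q})$ from the rational Serre spectral sequence of the circle bundle in the left column of Diagram \eqref{pvnkdefdiag}. A slightly cleaner packaging is to treat $PV_{n,k}(\mathbb{C})$ as the homotopy fibre of $Bi\circ$-type data, namely the homotopy pullback of $f_0$ along $Bi$. Either way, I will work with the fibration
\[
V_{n,k}(\mathbb{C})\larrow PV_{n,k}(\mathbb{C})\stackrel{\omega}{\larrow}\mathbb{CP}^\infty .
\]
Its base is simply connected, so the Serre spectral sequence has untwisted coefficients. Rationally we have
\[
E_2=\mathbb{Q}[u]\otimes\Lambda(x_{2(n-k+1)-1},\ldots,x_{2n-1}).
\]
The fibre is rationally a product of odd spheres, and each $x_{2r-1}$ is transgressive, being pulled back from the universal case $\mathbb{U}(n)/\mathbb{U}(n-k)\to B\mathbb{U}(n-k)\to B\mathbb{U}(n)$. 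By \eqref{tran-w-eq}, $\tau(x_{2r-1})=\binom{n}{r}u^r$. So $E_2$, with these transgressions, is exactly a relative Sullivan model (Koszul complex) $(\mathbb{Q}[u]\otimes\Lambda(x_{2r-1}),\,d)$ with $dx_{2r-1}=\binom{n}{r}u^r$. I would justify this either by the standard model of a pullback of $B\mathbb{U}(n-k)\to B\mathbb{U}(n)$, or directly by a Borel/Koszul argument on the spectral sequence.

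Next, I change generators in the Koszul complex. Set $s=n-k+1$. The bottom generator $x_{2s-1}$ has $dx_{2s-1}=\binom{n}{s}u^{s}$ with $\binom{n}{s}\neq0$, since $1\le s\le n$. For each $r>s$, define $x'_{2r-1}=x_{2r-1}-\frac{\binom{n}{r}}{\binom{n}{s}}u^{r-s}x_{2s-1}$. Then $dx'_{2r-1}=0$. After this substitution the dga splits as a tensor product
\[
(\mathbb{Q}[u]\otimes\Lambda(x_{2s-1}),\,dx_{2s-1}=\binom{n}{s}u^{s})\otimes(\Lambda(x'_{2r-1}:r>s),\,0).
\]
The first factor has cohomology $\mathbb{Q}[u]/(u^{s})$, because $u^{s}$ is a nonzerodivisor. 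Hence the cohomology of the whole complex is
\[
\Lambda(x'_{2(n-k+2)-1},\ldots,x'_{2n-1})\otimes\mathbb{Q}[u]/(u^{n-k+1}).
\]
This is the desired ring with $\omega=\omega^\ast(u)$, since $u$ pulls back to the Euler class of the circle bundle \eqref{pvnkdefeq}. The generators $x'$ are renamed $x$.

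The main obstacle is justifying that the Koszul model computes the ring $H^\ast(PV_{n,k};\mathbb{Q})$, and not merely an associated graded with possible multiplicative extensions. I expect to handle this as follows.
\begin{enumerate}
\item The relative Sullivan model of a pullback fibration is the pushout of models, and $B\mathbb{U}(n)$, $B\mathbb{U}(n-k)$ are formal with polynomial cohomology. This gives the model above with differential determined by $f_0^\ast(c_r)=\binom{n}{r}u^r$.
\item Alternatively, the spectral sequence argument already shows that $E_\infty$ is free over $\mathbb{Q}[u]/(u^{s})$ on an exterior algebra with odd generators. Odd classes square to zero by graded commutativity, and $\omega^{s}=0$ holds because $u^s$ is hit by a differential. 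Counting dimensions then shows there are no further relations, so a lift of $E_\infty$ gives the ring isomorphism.
\end{enumerate}

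Either route completes the proof.
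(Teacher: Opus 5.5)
Your proposal is correct and is essentially the argument the paper has in mind: the paper states the lemma without proof, as a standard consequence of the facts recalled from \cite{AGMP99}, namely the rational Serre spectral sequence of $V_{n,k}(\mathbb{C})\to PV_{n,k}(\mathbb{C})\to\mathbb{CP}^\infty$ with transgressions \eqref{tran-w-eq}, and your Koszul computation with the substituted generators $x'_{2r-1}$ (or, equivalently, your $E_\infty$ computation followed by lifting) makes this explicit. The one step to tighten is the end of route (2): instead of just ``counting dimensions,'' say that the ring map $\Lambda(\bar x)\otimes\mathbb{Q}[\omega]/(\omega^{n-k+1})\to H^\ast(PV_{n,k}(\mathbb{C});\mathbb{Q})$ respects the Serre filtration and induces an isomorphism on associated graded objects, so it is an isomorphism.
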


Since we are interested in the mapping degree, in addition to the rational cohomology we also need sufficient information about the integral cohomology. \rev{We include the proof below because an earlier statement of the integral cohomology ring of complex projective Stiefel manifolds in \cite{Ruiz69} is not correct in the form needed here.} For any triple $(n,k,r)$ of positive integers such that $n-k+2\leq r\leq n$, let us define
\begin{equation}\label{abeq}
a_{n,k,r}=\frac{b_{n,k,r-1}}{b_{n,k,r}},
\end{equation}
where 
\begin{equation}\label{bdefeq}
b_{n,k,r}={\rm g.c.d.}~\Big\{\binom{n}{n-k+1},\cdots, \binom{n}{r} \Big\}.
\end{equation}
Note that $b_{n,k,r}~|~b_{n,k,r-1}$, $b_{n,k,r}~|~b_{n,k-1,r}$, $b_{n,k,r}={\rm g.c.d.}(b_{n,k,r-1}, b_{n,k-1,r})$ and $b_{n, k, n}=1$.
\begin{theorem}\label{cohompvnkthm}
There is a graded ring epimorphism
\[
\Lambda(\bar{x}_{2r-1}~|~n-k+2\leq r\leq n) \otimes J(\omega)\twoheadrightarrow H^\ast(PV_{n,k}(\mathbb{C});\mathbb{Z}),
\]
such that
\begin{itemize}
\item[(1).] $J(\omega):=\mathbb{Z}[\omega]/(b_{n,k,r}\omega^r~|~n-k+1\leq r\leq n)$,
\item[(2).] The image of $\omega^\ast: H^\ast(\mathbb{C}P^{\infty};\mathbb{Z})\larrow H^\ast(PV_{n,k}(\mathbb{C});\mathbb{Z})$ is exactly $J(\omega)$,
\item[(3).] $\Lambda(\bar{x}_{2r-1}~|~n-k+2\leq r\leq n)$ is a subalgebra of $H^\ast(PV_{n,k}(\mathbb{C});\mathbb{Z})$,
\item[(4).] $p_{n,k}^\ast(\bar{x}_{2r-1})= a_{n,k,r} x_{2r-1}\in H^{2r-1}(V_{n,k}(\mathbb{C});\mathbb{Z})$.
\end{itemize}
\end{theorem}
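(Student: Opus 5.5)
We will compute $H^\ast(PV_{n,k}(\mathbb{C});\mathbb{Z})$ from the integral Serre spectral sequence of the left column of Diagram \eqref{pvnkdefdiag}, the fibration $V_{n,k}(\mathbb{C})\larrow PV_{n,k}(\mathbb{C})\stackrel{\omega}{\larrow}\mathbb{C}P^\infty$. Its $E_2$-page is $\mathbb{Z}[u]\otimes\Lambda(x_{2(n-k+1)-1},\ldots,x_{2n-1})$, which is free over $\mathbb{Z}$. The differentials on the generators are determined by the transgression formula \eqref{tran-w-eq}, $\tau(x_{2r-1})=\binom{n}{r}u^r$. Since the spectral sequence is multiplicative and $E_2$ is torsion-free, we plan to organize the computation as a Koszul-type complex, or equivalently to filter by the number of fibre generators.

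The first step is to change the generators of the exterior algebra so that only one of them supports a nonzero differential. Set $d_r:=\binom{n}{r}$ for $r\in I_{n,k}$. Proceeding inductively in $r$ from $n-k+1$ upward, we will show that $x_{2r-1}$ survives to the page $E_{2r}$ modulo the image of earlier differentials. There it transgresses to $d_ru^r$. On that page the class $u^r$ already carries the relation $b_{n,k,r-1}u^r=0$, coming from the earlier relations $b_{n,k,s}u^s=0$ multiplied by powers of $u$. Hence the image of $x_{2r-1}$ is effectively $d_r$ modulo $b_{n,k,r-1}$. Writing $b_{n,k,r}=\gcd(b_{n,k,r-1},d_r)$, we will find an integer combination that kills this transgression: the element $a_{n,k,r}x_{2r-1}$, corrected by products of $u^{r-s}$ with earlier classes, becomes a permanent cycle. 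We name the class it represents $\bar{x}_{2r-1}$. The new relation imposed on the base is $b_{n,k,r}u^r=0$.

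This accounts for items (1) and (2). The base row $E_\infty^{\ast,0}$ is $\mathbb{Z}[u]$ modulo the ideal generated by the $d_ru^r$ together with everything forced multiplicatively, and this ideal equals $(b_{n,k,r}u^r)$. Since $E_\infty^{\ast,0}$ is exactly the image of $\omega^\ast$, the image is $J(\omega)$. For item (4), $p_{n,k}^\ast$ is the edge homomorphism to the fibre row, and the permanent cycle we constructed restricts to $a_{n,k,r}x_{2r-1}$. We still have to confirm the integrality of this step: that the correction terms vanish on restriction to the fibre, and that $a_{n,k,r}$ is the minimal multiplier. Minimality holds because any multiple $c\,x_{2r-1}$ that survives needs $cd_r\in(b_{n,k,r-1})$, which happens exactly when $a_{n,k,r}\mid c$.

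For item (3) we need the $\bar{x}_{2r-1}$ to generate an exterior algebra. Their squares vanish by graded commutativity in odd degree once we check that $H^\ast$ has no $2$-torsion issue in those degrees. Alternatively, we can argue via the rational injection on the subalgebra, using Lemma \ref{qcohompvnklemma}: the products of distinct $\bar{x}$'s map to nonzero rational classes, so the subalgebra they generate is exterior. Surjectivity of the epimorphism will then follow because $E_\infty$ is generated as a $J(\omega)$-module by monomials in the permanent cycles. This is where the main obstacle lies. We must show that $E_\infty$ is exactly $J(\omega)\otimes\Lambda(\bar{x})$ in associated graded form, with no further hidden differentials on products, and that multiplicative extensions do not obstruct lifting. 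We would first try the Koszul/regular-sequence description: compute the homology of $\mathbb{Z}[u]\otimes\Lambda(x)$ with $d x_{2r-1}=d_ru^r$ directly, using the generator change above, and then invoke that the Serre spectral sequence degenerates to this model because all differentials are generated by transgressions.
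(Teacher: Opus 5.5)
The paper proves (1), (2) and (4) the same way you do. It uses the Serre spectral sequence of $V_{n,k}(\mathbb{C})\to PV_{n,k}(\mathbb{C})\to\mathbb{C}P^\infty$, gets that $x_{2r-1}$ survives to $E_{2r}$ by comparison with $V_{n,k}(\mathbb{C})\to B\mathbb{U}(n-k)\to B\mathbb{U}(n)$, and reads off $a_{n,k,r}$ and $b_{n,k,r}$ from $d_{2r}\colon \mathbb{Z}\{x_{2r-1}\}\to\mathbb{Z}/b_{n,k,r-1}\{\omega^r\}$. Your minimality argument for $a_{n,k,r}$ is the same.

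The gap is surjectivity, i.e. that $\omega$ and the $\bar x_{2r-1}$ generate the whole ring. You propose to get it by asserting that the Serre spectral sequence ``degenerates to'' the Koszul complex $\mathbb{Z}[u]\otimes\Lambda(x_{2r-1})$, $dx_{2r-1}=\binom{n}{r}u^r$, ``because all differentials are generated by transgressions''. That is not a valid principle here. Transgression plus the Leibniz rule determine $d_s$ only on the subalgebra of $E_s$ generated by classes whose differentials are already known, and nothing in your argument shows the intermediate integral pages are generated that way. Borel's comparison argument works when the transgressions are free polynomial generators of the base, so that the model is a tensor product of elementary spectral sequences. Here they are all multiples of powers of the single class $u$. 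Identifying $H^\ast(PV_{n,k}(\mathbb{C});\mathbb{Z})$ with the Koszul homology would need a separate integral collapse theorem for the Eilenberg--Moore spectral sequence of the pullback square \eqref{pvnkdefdiag}.

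Even granting such a model, the rest of the plan fails. Your stated target, that $E_\infty$ is exactly $J(\omega)\otimes\Lambda(\bar x)$, is false. For $n=k=2$ one has $J(\omega)=\mathbb{Z}[\omega]/(2\omega,\omega^2)$ and $\bar x_3$ is represented by $2x_3$. So $J(\omega)\otimes\Lambda(\bar x_3)$ contains the nonzero class $\omega\bar x_3$ in degree $5$, while $P\mathbb{U}(2)\cong SO(3)$ is $3$-dimensional. This is why the theorem claims only an epimorphism.

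Relatedly, over $\mathbb{Z}[u]$ you cannot change the exterior generators so that only one supports a differential. The cycle $a_{n,k,r}x_{2r-1}+(\text{multiples of }u)$ is not part of a $\mathbb{Z}[u]$-basis of the exterior generators when $a_{n,k,r}>1$. So the Koszul complex does not split off $\Lambda(\bar x)$, and there is no a priori reason its homology is generated by $u$ and your cycles. That generation statement is exactly what is needed, and your plan gives no argument for it.

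The paper supplies it from outside. By Lemma~\ref{qcohompvnklemma}, any further indecomposable would be torsion of prime-power order. The mod $p$ computations of \cite[Theorems 1.1 and 1.2]{AGMP99} for $n>k$ and of \cite{Duan20} for $n=k$ are then used to exclude such a class.

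Your rational fallback for (3) also falls short. It only shows that the square-free monomials in the $\bar x_{2r-1}$ are linearly independent. It says nothing about $\bar x_{2r-1}^2$, which is $2$-torsion by graded commutativity and therefore invisible rationally.
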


\begin{proof}
By the theory of Serre spectral sequence (for instance, see \cite{McCleary01}), there is the following commutative diagram for the Serre spectral sequence $(E_{\ast}^{\ast,\ast}, d_\ast)$ of the fibre bundle in the left column of Diagram \eqref{pvnkdefdiag}
\begin{gather}
\begin{aligned}
\xymatrix{
H^{2r-1}(PV_{n,k}(\mathbb{C})) \ar[r]^{p_{n,k}^\ast}  \ar@{->>}[d]^{p_{\infty}}    & H^{2r-1}(V_{n,k}(\mathbb{C})) \ar[r]^{\delta \ \ \ \ \ \ }    & H^{2r}(PV_{n,k}(\mathbb{C}), V_{n,k}(\mathbb{C}))\ar[r]^{ \ \ \ \ \ \   j^\ast}   & H^{2r}(PV_{n,k}(\mathbb{C})) \\
E_{\infty}^{0,2r-1}\cong E_{2r+1}^{0,2r-1} \ar@{^{(}->}[r]^{\ \ \ \ \ i_{\infty, 2r}}    &  E_{2r}^{0,2r-1}\ar[r]^{d_{2r}}  \ar@{^{(}->}[u]^{i_{2r,2}}  & E_{2r}^{2r,0}   \ar@{^{(}->}[u] 
\ar@{->>}[r]  &  E_{2r+1}^{2r,0}\cong  E_{\infty}^{2r,0}  \ar@{^{(}->}[u]\\
&& H^{2r}(\mathbb{C}P^{\infty},\ast) \ar@{->>}[u]  \ar@/_2.7pc/[uu]_>(.8){ \omega^\ast} \ar[r]^{j^\ast}_{\cong}  & H^{2r}(\mathbb{C}P^{\infty}).  \ar@{->>}[u]   \ar@/_2.7pc/[uu]_>(.8){ \omega^\ast}
}
\end{aligned}
\label{transdiag}
\end{gather}
Likewise, there is a similar commutative diagram for the Serre spectral sequence $(\widetilde{E}_{\ast}^{\ast,\ast}, \widetilde{d}_\ast)$ of the bundle in the right column of Diagram \eqref{pvnkdefdiag}, and it maps to the above diagram by the naturality of Serre spectral sequence. Accordingly, we have 
\begin{gather*}
\begin{aligned}
\xymatrix{
H^{2r-1}(V_{n,k}(\mathbb{C}))   \ar@{=}[r]  & H^{2r-1}(V_{n,k}(\mathbb{C})) \\
\widetilde{E}_{2r}^{0,2r-1}  \ar@{^{(}->}[u]^{\cong}  \ar[r]       & E_{2r}^{0,2r-1}  \ar@{^{(}->}[u]^{i_{2r,2}},
}
\end{aligned} 
\end{gather*}
where the left inclusion is an isomorphism by inspecting the spectral sequence. As $i_{2r,2}$ is injective, it follows that the other two morphisms in the above square are isomorphisms.

By Lemma \ref{qcohompvnklemma}, there exists a set of elements $\{\bar{x}_{2r-1}  ~|~ n-k+2\leq r\leq n\}$ (${\rm deg}(x_{2r-1})=2r-1$) such that $\Lambda(\bar{x}_{2r-1}~|~n-k+2\leq r\leq n)$ is a subalgebra of $H^\ast(PV_{n,k}(\mathbb{C});\mathbb{Z})$ and each $\bar{x}_{2r-1}$ is a cohomology generator. In particular, $p_{\infty}$ in Diagram \eqref{transdiag} maps the free summand $\mathbb{Z}\{\bar{x}_{2r-1}\}$ isomorphically to $E_{\infty}^{0,2r-1}$. 
As we have shown that $i_{2r,2}$ is an isomorphism, we can identify $p_{n,k}^\ast$ with $i_{\infty, 2r}$ on this $\mathbb{Z}$-summand by the left square of Diagram \eqref{transdiag}. Furthermore, in Diagram \eqref{transdiag} $d_{2r}$ is the transgression and then by \eqref{tran-w-eq}
\[
d_{2r}(x_{2r-1})={n\choose r} \omega^r.
\]

Combining the above, \rev{the maps in the second row of Diagram \eqref{transdiag} may be restricted to the relevant free summands and cyclic subgroups generated by $x_{2r-1}$ and $\omega^r$.} We can inductively show that there is an exact subsequence for the summand $\mathbb{Z}\{\bar{x}_{2r-1}\}$ ($n-k+2\leq r\leq n$)
\[
0\larrow \mathbb{Z}\{\bar{x}_{2r-1}\} \stackrel{ \times  a_{n,k,r}}{\llarrow}\mathbb{Z}\{x_{2r-1}\}\stackrel{\times {n\choose r}}{\llarrow} \mathbb{Z}/b_{n,k,r-1}\{\omega^r\} \llarrow \mathbb{Z}/b_{n,k,r}  \{\omega^r\}  \larrow 0, 
\]
\rev{Here the second map is multiplication by $a_{n,k,r}$ because $p_{n,k}^\ast(\bar{x}_{2r-1})=a_{n,k,r}x_{2r-1}$ on the free summand. The group immediately before $d_{2r}$ is $\mathbb{Z}/b_{n,k,r-1}\{\omega^r\}$ by the previous differentials, while quotienting further by $d_{2r}(x_{2r-1})=\binom{n}{r}\omega^r$ gives $\mathbb{Z}/b_{n,k,r}\{\omega^r\}$, since $b_{n,k,r}=\gcd(b_{n,k,r-1},\binom{n}{r})$.}
and in particular,
\[
p_{n,k}^\ast(\bar{x}_{2r-1})= a_{n,k,r} x_{2r-1}
~\ \ \text{
and} ~ \ \ 
0=b_{n,k,r}\omega^r\in E_{\infty}^{2r,0}\subseteq H^{2r}(PV_{n,k}(\mathbb{C})).
\]
The latter implies that the image of the edge homomorphism $\omega^\ast$ in the last column of Diagram \eqref{transdiag} is $J(\omega)$.

We have proved all the statements in the theorem except that $\{\bar{x}_{2r-1}~|~n-k+2\leq r\leq n\}\cup \{\omega\}$ is a set of algebraic generators of $H^\ast(PV_{n,k}(\mathbb{C}))$. Suppose for contradiction there is another generator $x$. Then by Lemma \ref{qcohompvnklemma} $x$ is an indecomposable torsion element, say of order $p^i$. It implies that there exists an indecomposable element other than $x_{2r-1}$ and $\omega$ in the cohomology ring $H^\ast(PV_{n,k}(\mathbb{C});\mathbb{Z}/p)$. However, this is impossible by \cite[Theorems 1.1 and 1.2]{AGMP99} in the case $n>k$ and by \cite{Duan20} in the case $n=k$. Therefore, $\{\bar{x}_{2r-1}~|~n-k+2\leq r\leq n\}\cup \{\omega\}$ is a set of algebraic generators of $H^\ast(PV_{n,k}(\mathbb{C}))$, and the theorem is proved. 
\end{proof}

\begin{remark}
Following \cite{DW20}, we may call the sequence of numbers $\{a_{n,k,r}~|~n-k+2\leq r \leq n\}$ the {\it multi-degree} of $p_{n,k}$. 

Theorem \ref{cohompvnkthm} generalizes the corresponding result for $PV_{n,n}(\mathbb{C})=P\mathbb{U}(n)$ in \cite{DL17}. Additionally, the first author \cite{Duan20} has computed the integral cohomology ring of $P\mathbb{U}(n)$.
\end{remark}

\section{Equivariant maps from spheres to complex Stiefel manifolds}\label{condisec}
In this section, we deduce numerical consequences of a given equivariant map from spheres to complex Stiefel manifolds. The main result is Lemma \ref{neccondleema3}.

Let $L^{2r-1}(m)$ denote the orbit lens space of the sphere $S^{2r-1}$ by the standard action of the finite group $\mathbb{Z}/m$ of the $m$-roots of unity. From the principal bundle 
\[
\mathbb{Z}/m\larrow S^{2r-1} \stackrel{\rho}{\larrow} L^{2r-1}(m),
\]
there is the circle bundle of its associated complex line bundle 
\[
S^1\larrow S^{2r}(m) \stackrel{p}{\larrow} L^{2r-1}(m),
\]
where $S^{2r}(m)=S^{2r-1}\times_{\mathbb{Z}/m} S^1$.
\begin{lemma}\label{klemma}
There is a principal bundle diagram
\begin{gather*}
\begin{aligned}
\xymatrix{
\mathbb{Z}/m \ar[r] \ar@{^{(}->}[d]   &  S^{2r-1} \ar[r]^<<<<{\rho} \ar[d]^{\kappa}  &  L^{2r-1}(m) \ar@{=}[d]\\
S^1 \ar[r] &  S^{2r}(m)  \ar[r]^<<<<{p}  & L^{2r-1}(m) 
}
\end{aligned}
\end{gather*}
\rev{such that $\kappa^\ast: H^{2r-1}(S^{2r}(m);\mathbb{Z})\to H^{2r-1}(S^{2r-1};\mathbb{Z})$ is an isomorphism.}
\end{lemma}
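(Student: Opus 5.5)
The map $\kappa$ will be the inclusion $S^{2r-1}=S^{2r-1}\times\{1\}\hookrightarrow S^{2r-1}\times S^1\to S^{2r-1}\times_{\mathbb{Z}/m}S^1=S^{2r}(m)$, that is, $\kappa(v)=[v,1]$. It is equivariant along the inclusion $\mathbb{Z}/m\hookrightarrow S^1$ of the $m$-th roots of unity. For $\zeta\in\mathbb{Z}/m$ we have $[\zeta v,1]=[v,\zeta]$ under the identification $(\zeta v,z)\sim(v,\zeta z)$, so $\kappa(\zeta v)=\zeta\cdot\kappa(v)$ for the right $S^1$-action on the second factor. The diagram therefore commutes: $p\kappa(v)=[v]=\rho(v)$. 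The bottom row is the principal $S^1$-bundle obtained from $\rho$ by extending the structure group, so the diagram is a map of principal bundles. This gives the first part.

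For the cohomology statement, I would compare the two bundles over $L=L^{2r-1}(m)$. The bundle $S^{2r}(m)\to L$ is the circle bundle of the line bundle $\xi$ associated to $\rho$. Its Euler class $e=c_1(\xi)$ generates $H^2(L)\cong\mathbb{Z}/m$. The Gysin sequence gives
\[
H^{2r-1}(L)\xrightarrow{p^\ast} H^{2r-1}(S^{2r}(m))\xrightarrow{p_!} H^{2r-2}(L)\xrightarrow{\cdot e} H^{2r}(L)=0 .
\]
Here $H^{2r-2}(L)=\mathbb{Z}/m$ is generated by $e^{r-1}$. Also $H^{2r-1}(L)=\mathbb{Z}$, generated by the orientation class $[L]^\ast$, and the preceding map $H^{2r-3}(L)\xrightarrow{\cdot e}H^{2r-1}(L)$ is zero since its source is torsion or vanishes. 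So $H^{2r-1}(S^{2r}(m))$ is an extension of $\mathbb{Z}/m$ by $\mathbb{Z}$.

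To see that $\kappa^\ast$ is an isomorphism, I would argue directly rather than through the Gysin sequence. The space $S^{2r}(m)$ is a closed orientable $2r$-manifold (an $S^1$-bundle over $L$ with structure group $S^1$). It is also the quotient of $S^{2r-1}\times S^1$ by the free diagonal $\mathbb{Z}/m$-action, and it fibres over $S^1/(\mathbb{Z}/m)\cong S^1$ with fibre $S^{2r-1}$ via $[v,z]\mapsto z^m$. The inclusion of the fibre over $1$ is exactly $\kappa$. The Wang sequence of this fibration over $S^1$ then reads
\[
H^{2r-2}(S^{2r-1})=0\to H^{2r-1}(S^{2r}(m))\xrightarrow{\kappa^\ast}H^{2r-1}(S^{2r-1})\xrightarrow{1-\theta^\ast}H^{2r-1}(S^{2r-1}),
\]
where the monodromy $\theta$ is multiplication by $\zeta=e^{2\pi i/m}$ on $S^{2r-1}$. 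This map lies in $\mathbb{U}(r)$, which is connected, so $\theta$ is homotopic to the identity and $1-\theta^\ast=0$. Hence $\kappa^\ast$ is an isomorphism $H^{2r-1}(S^{2r}(m))\cong\mathbb{Z}$. (So the torsion seemingly allowed by the Gysin extension does not occur.)

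The main obstacle is making sure that the fibration $[v,z]\mapsto z^m$ is well defined with fibre exactly $\kappa(S^{2r-1})$. The map is well defined because $(\zeta z)^m=z^m$. Its fibre over $1$ consists of the classes $[v,z]$ with $z^m=1$, and each such class equals $[\zeta^{-j}v,1]$, so the fibre is $\kappa(S^{2r-1})$ and $\kappa$ is injective onto it. Local triviality holds because $S^{2r}(m)$ is the mapping torus of $\theta$.
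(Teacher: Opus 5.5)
Your proof is correct for $r\geq 2$, and your $\kappa(v)=[v,1]$ is the same map as the paper's. The difference is in how you prove the cohomological claim.

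\emph{The paper's route.} It stays with the $\mathbb{Z}/m$-covering square formed by $1\times i\colon S^{2r-1}\times\mathbb{Z}/m\to S^{2r-1}\times S^1$ and the two orbit projections. It deduces that $\kappa^\ast$ is an isomorphism in degree $2r-1$ from the fact that $(1\times i)^\ast$ is an isomorphism on the summand $H^{2r-1}(S^{2r-1})$. Read literally, this comparison only shows that $\kappa^\ast$ is an isomorphism if and only if the covering projection $S^{2r-1}\times S^1\to S^{2r}(m)$ induces one on $H^{2r-1}$. The paper leaves that step implicit.

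\emph{Your route.} You realize $S^{2r}(m)$ as the mapping torus of $v\mapsto\zeta v$ via $[v,z]\mapsto z^m$, identify $\kappa$ with the fibre inclusion, and apply the Wang sequence. You use that the monodromy lies in the connected group $\mathbb{U}(r)$, so $1-\theta^\ast=0$. This computes $H^{2r-1}(S^{2r}(m))\cong\mathbb{Z}$ and proves that $\kappa^\ast$ is bijective with no hidden step, at the cost of slightly more setup. Your Gysin paragraph is not needed for this. The idea can be pushed further: $[v,z]\mapsto (zv,z^m)$ is an explicit homeomorphism $S^{2r}(m)\cong S^{2r-1}\times S^1$ that carries $\kappa$ to the inclusion of $S^{2r-1}\times\{1\}$.

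Two small points:
\begin{itemize}
\item Your injectivity step uses $H^{2r-2}(S^{2r-1})=0$, which needs $r\geq 2$. For $r=1$ the space $S^2(m)$ is a torus, so $\kappa^\ast\colon H^1(S^2(m))\cong\mathbb{Z}^2\to H^1(S^1)$ cannot be an isomorphism and the statement fails. The paper's argument tacitly shares this restriction.
\item With your convention $(\zeta v,z)\sim(v,\zeta z)$, one has $[v,\zeta^j]=[\zeta^j v,1]$, not $[\zeta^{-j}v,1]$. This does not affect the argument.
\end{itemize}
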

\begin{proof}
Denote by $i: \mathbb{Z}/m\hookrightarrow S^1$ the inclusion of the subgroup. The map $1\times i:  S^{2r-1}\times \mathbb{Z}/m \larrow S^{2r-1}\times S^1$ is $\mathbb{Z}/m$-equivariant with respect to the diagonal action. It induces a map of orbit spaces 
\[
\kappa: S^{2r-1}\cong (S^{2r-1}\times \mathbb{Z}/m)/(\mathbb{Z}/m) \stackrel{}{\larrow} (S^{2r-1}\times S^1)/(\mathbb{Z}/m)=S^{2r}(m).
\]
Then there is a principal bundle diagram
\begin{gather*}
\begin{aligned}
\xymatrix{
\mathbb{Z}/m \ar[r] \ar@{^{(}->}[d]   &  (S^{2r-1}\times \mathbb{Z}/m)/(\mathbb{Z}/m)   \ar[r]^<<<<{} \ar[d]^{\kappa}  &  S^{2r-1}/(\mathbb{Z}/m) \ar@{=}[d]\\
S^1 \ar[r] &  (S^{2r-1}\times S^1)/(\mathbb{Z}/m) \ar[r]^<<<<{}  & S^{2r-1}/(\mathbb{Z}/m), 
}
\end{aligned}
\end{gather*}
which is the required principal bundle diagram in the lemma. \rev{The map $1\times i$ descends to $\kappa$ after passing to the orbit spaces; equivalently, the quotient projections form a covering diagram over $\kappa$ with group $\mathbb{Z}/m$. Since}
$(1\times i)^\ast: H^{2r-1}(S^{2r-1}\times S^1)\larrow H^{2r-1}(S^{2r-1}\times \mathbb{Z}/m)$ is an isomorphism on the direct summand $H^{2r-1}(S^{2r-1})$, it follows that \rev{$\kappa^\ast$ is an isomorphism in degree $2r-1$}.
\end{proof}

\rev{In the sequel, for $n-k+1\leq r\leq n$, we choose $\hat{\omega}_{2r-1}\in H^{2r-1}(S^{2r}(m);\mathbb{Z})$ by the condition $\kappa^\ast(\hat{\omega}_{2r-1})=\omega_{S^{2r-1}}$. For a map $g:S^{2r}(m)\to V_{n,k}(\mathbb{C})$, we define its degree in degree $2r-1$ by $g^\ast(x_{2r-1})={\rm deg}(g)\hat{\omega}_{2r-1}$. This is a cohomological degree with respect to the chosen generator, not a Brouwer degree of maps between manifolds of the same dimension. With this convention, ${\rm deg}(g\circ\kappa)={\rm deg}(g)$.}

Suppose there is a $\mathbb{Z}/m$-equivariant map
\[
f: S^{2r-1}\larrow V_{n,k}(\mathbb{C}),
\]
where $\mathbb{Z}/m$ acts on $V_{n,k}(\mathbb{C})$ as a subgroup of $S^1$. 

\begin{lemma}\label{fslemma}
The $\mathbb{Z}/m$-equivariant map
$
f: S^{2r-1}\larrow V_{n,k}(\mathbb{C})
$
factors through $\kappa$ as 
\[
f: S^{2r-1}\stackrel{\kappa}{\larrow} S^{2r}(m) \stackrel{f^s}{\larrow}  V_{n,k}(\mathbb{C}),
\]
where $f^s$ is an $S^1$-equivariant map with ${\rm deg}(f)={\rm deg}(f^s)$. Accordingly, there is a principal bundle diagram
\begin{gather*}
\begin{aligned}
\xymatrix{
S^1 \ar@{=}[d]   \ar[r] &  S^{2r}(m)  \ar[d]^{f^{s}} \ar[r]^<<<<{p}  & L^{2r-1}(m) \ar[d]^{\bar{f}}\\
S^1              \ar[r]   &  V_{n,k}(\mathbb{C}) \ar[r]^<<<<{p_{n,k}}   &   PV_{n,k}(\mathbb{C}),
}
\end{aligned}
\end{gather*}
where $\bar{f}$ is the induced map.
\end{lemma}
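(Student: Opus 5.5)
The plan is to write down $f^s$ explicitly by extending $f$ along the circle direction. Consider
\[
F\colon S^{2r-1}\times S^1\larrow V_{n,k}(\mathbb{C}),\qquad F(x,z)=z\cdot f(x),
\]
where $z$ acts through the free $S^1$-action on $V_{n,k}(\mathbb{C})$.

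First we check that $F$ descends to $S^{2r}(m)=(S^{2r-1}\times S^1)/(\mathbb{Z}/m)$. Write the diagonal action of $\zeta\in\mathbb{Z}/m$ as $(x,z)\mapsto(\zeta x,\zeta^{-1}z)$, which is the convention making the associated bundle well defined. Equivariance of $f$ then gives
\[
F(\zeta x,\zeta^{-1}z)=\zeta^{-1}z\,\zeta f(x)=F(x,z),
\]
using that the $\mathbb{Z}/m$-action on $V_{n,k}(\mathbb{C})$ is the restriction of the $S^1$-action. If the other sign convention is used, we replace $z$ by $z^{-1}$ in the formula for $F$.

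This defines $f^s\colon S^{2r}(m)\to V_{n,k}(\mathbb{C})$. It is $S^1$-equivariant for the principal $S^1$-action on $S^{2r}(m)$, namely $w[x,z]=[x,wz]$, because $F(x,wz)=wF(x,z)$. The map $\kappa$ from Lemma~\ref{klemma} sends $x$ to $[x,1]$, since $S^{2r-1}\cong(S^{2r-1}\times\mathbb{Z}/m)/(\mathbb{Z}/m)$ via $x\mapsto[x,1]$. Hence $f^s\circ\kappa(x)=F(x,1)=f(x)$, which is the required factorization.

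The degree equality follows from the convention fixed after Lemma~\ref{klemma}. That convention gives ${\rm deg}(g\circ\kappa)={\rm deg}(g)$ for any $g$, because $\kappa^\ast(\hat\omega_{2r-1})=\omega_{S^{2r-1}}$ and $\kappa^\ast$ is an isomorphism in degree $2r-1$. Applying this to $g=f^s$ gives ${\rm deg}(f)={\rm deg}(f^s)$.

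For the bundle diagram, note that an $S^1$-equivariant map between free principal $S^1$-spaces induces a map of orbit spaces. So $f^s$ induces
\[
\bar f\colon S^{2r}(m)/S^1=L^{2r-1}(m)\larrow V_{n,k}(\mathbb{C})/S^1=PV_{n,k}(\mathbb{C}),
\]
with $p_{n,k}\circ f^s=\bar f\circ p$. This makes $(f^s,\bar f)$ a morphism of principal $S^1$-bundles that is the identity on the structure group. The identification $S^{2r}(m)/S^1\cong S^{2r-1}/(\mathbb{Z}/m)$ holds because $[x,z]\mapsto[x]$ is well defined with fibre $S^1$.

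The only delicate point is keeping the sign and inverse conventions in the diagonal action consistent, so that $F$ really descends and $\kappa$ is the map of Lemma~\ref{klemma}. Continuity of $f^s$ and $\bar f$ is automatic from the quotient topology.
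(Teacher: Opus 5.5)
Your proof is correct, but it reaches the lemma by a more explicit route than the paper.

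\textbf{The paper's route.} It works through classifying spaces:
\begin{enumerate}
\item It defines $\bar f$ as the composite $L^{2r-1}(m)\to V_{n,k}(\mathbb{C})/(\mathbb{Z}/m)\to PV_{n,k}(\mathbb{C})$.
\item It notes that $\omega\circ\bar f$ factors as the classifying map $L^{2r-1}(m)\to B\mathbb{Z}/m$ of $\rho$ followed by $Bi\colon B\mathbb{Z}/m\to BS^1$.
\item It concludes that the pullback of $p_{n,k}$ along $\bar f$ is $\rho$ with structure group extended to $S^1$, i.e.\ $p\colon S^{2r}(m)\to L^{2r-1}(m)$.
\item It takes $f^s$ to be the resulting bundle map and reads off $f=f^s\circ\kappa$ by comparing two bundle ladders.
\end{enumerate}

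\textbf{Your route.} You reverse the order. You write the structure-group extension down concretely as $f^s[x,z]=z\cdot f(x)$, check that it descends and is $S^1$-equivariant, and only then obtain $\bar f$ on orbit spaces. Since $\bar f[x]=[f(x)]$, it is the same map as the paper's.

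\textbf{What each buys.} Your version gives the factorization $f=f^s\circ\kappa$, and the identity on the structure group, on the nose. You never have to pin down which bundle isomorphism the classifying-map comparison produces. The paper's version is phrased in the same homotopy-theoretic language as the lifting problem \eqref{rliftprodia} and Lemma~\ref{trivilsubbunlemma}, where the argument runs backwards from a lift $L^{2r-1}(m)\to PV_{n,k}(\mathbb{C})$. Your bundle morphism also shows directly that $\bar f$ pulls the Euler class $\omega$ of $p_{n,k}$ back to that of $p$, which is what Lemma~\ref{neccondleema2} later uses.

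\textbf{A small bookkeeping point.} Suppose the diagonal action is read literally as $(\zeta x,\zeta z)$ and you switch to $F(x,z)=z^{-1}f(x)$. Then the principal action for which $f^s$ is equivariant is $w[x,z]=[x,w^{-1}z]$. So the $S^1$-action has to change together with the formula; this is harmless.
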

\begin{proof}
Consider the fibre bundle diagram 
\begin{gather*}
\begin{aligned}
\xymatrix{
S^{2r-1} \ar[r]^{f} \ar[d]^{\rho}  & V_{n,k}(\mathbb{C}) \ar@{=}[r] \ar[d]  &  V_{n,k}(\mathbb{C}) \ar[d]^{p_{n,k}} \\
L^{2r-1}(m) \ar[r]  \ar[d] &  V_{n,k}(\mathbb{C})/(\mathbb{Z}/m) \ar[r] \ar[d]  &  PV_{n,k}(\mathbb{C})  \ar[d]^{\omega} \\
B\mathbb{Z}/m \ar@{=}[r]  &  B\mathbb{Z}/m  \ar[r]^{Bi}   & BS^1,
}
\end{aligned}
\end{gather*}
where the two vertical maps to $B\mathbb{Z}/m$ classify the $m$-covers of $L^{2r-1}(m)$ and $V_{n,k}(\mathbb{C})/(\mathbb{Z}/m)$ respectively, and $\omega$ classifies the circle bundle \eqref{pvnkdefeq}. Denote by $\bar{f}$ the middle row composite. Then the circle bundle over $L^{2r-1}(m)$ classified by the composite
$
L^{2r-1}(m)\stackrel{\bar{f}}{\larrow} PV_{n,k}(\mathbb{C}) \stackrel{\omega}{\larrow}  \mathbb{C}P^{\infty}
$ 
is isomorphic to $S^1\larrow S^{2r}(m) \stackrel{p}{\larrow} L^{2r-1}(m)$. \rev{Indeed, the pullback of $V_{n,k}(\mathbb{C})/(\mathbb{Z}/m)\to PV_{n,k}(\mathbb{C})$ along $\bar f$ is the principal $\mathbb{Z}/m$-bundle $S^{2r-1}\to L^{2r-1}(m)$; extending its structure group by $i:\mathbb{Z}/m\hookrightarrow S^1$ gives precisely $S^{2r-1}\times_{\mathbb{Z}/m}S^1=S^{2r}(m)$.} Hence there is a fibre bundle diagram 
\begin{gather*}
\begin{aligned}
\xymatrix{
S^{2r-1} \ar[r]^{\kappa} \ar[d]^{\rho}  &  S^{2r}(m) \ar[r]^{f^s} \ar[d]^{p}  &  V_{n,k}(\mathbb{C}) \ar[d]^{p_{n,k}} \\
L^{2r-1}(m) \ar@{=}[r]  \ar[d] & L^{2r-1}(m)  \ar[r]^{\bar{f}} \ar[d]^{\omega}  &  PV_{n,k}(\mathbb{C})  \ar[d]^{\omega} \\
B\mathbb{Z}/m \ar[r]^{Bi}  &  BS^1 \ar@{=}[r]  & BS^1,
}
\end{aligned}
\end{gather*}
where $f^s$ is the induced map, the left part follows from Lemma \ref{klemma} and the right part implies the bundle diagram in the lemma. Comparing the above two bundle diagrams, we see that $f$ factors as $f^s\circ \kappa$ with $f^s$ an $S^1$-equivariant map. \rev{By the choice of $\hat{\omega}_{2r-1}$ above, this factorization implies ${\rm deg}(f)={\rm deg}(f^s)$.}
\end{proof}

\rev{Let $\Omega_L\in H^{2r-1}(L^{2r-1}(m);\mathbb{Z})$ be the orientation generator satisfying $\rho^\ast(\Omega_L)=m\omega_{S^{2r-1}}$. Then the equality $\kappa^\ast p^\ast(\Omega_L)=\rho^\ast(\Omega_L)=m\omega_{S^{2r-1}}$ and Lemma~\ref{klemma} imply $p^\ast(\Omega_L)=m\hat{\omega}_{2r-1}$. The labels $\rho^\ast=\times m$ and $p^\ast=\times m$ in the following diagram refer to these generators. Moreover, for $n-k+2\leq r\leq n$ and for a map $\phi:L^{2r-1}(m)\to PV_{n,k}(\mathbb{C})$, we define ${\rm deg}(\phi)$ by $\phi^\ast(\bar{x}_{2r-1})={\rm deg}(\phi)\Omega_L$ whenever this notation is used below.}

Applying the Gysin sequence to the circle bundles in Lemma \ref{fslemma}, we have the commutative diagram
\begin{gather}
\begin{aligned}
\xymatrix{
  & \mathbb{Z}\cong H^{2r-1}(L^{2r-1}(m))  \ar@{=}[d]  \ar[r]^<<<<{\rho^\ast=\times m} &
  H^{2r-1}(S^{2r-1})\cong \mathbb{Z} \\
  0 \ar[r]^<<<<<{\cup \omega}     &
  H^{2r-1}(L^{2r-1}(m))             \ar[r]^{p^\ast=\times m}  &
  H^{2r-1}(S^{2r}(m)) \ar[r]^<<<<{\Theta}  \ar[u]^{\kappa^\ast}_{\cong} &
  H^{2r-2}(L^{2r-1}(m))\cong \mathbb{Z}/m\{\omega^{r-1}\} \ar[r]^<<<{\cup \omega}&
  0\\
 &H^{2r-1}(PV_{n,k}(\mathbb{C})) \ar[r]^{p_{n,k}^\ast}  \ar[u]^{\bar{f}^\ast}&
 H^{2r-1}(V_{n,k}(\mathbb{C})) \ar[r]^<<<<<<<{\Theta} \ar[u]^{f^{s\ast}} &
 H^{2r-2}(PV_{n,k}(\mathbb{C}))  \ar[u]^{\bar{f}^\ast},
}
\end{aligned}
\label{gysinequifdia}
\end{gather}
where the upper left square is induced from the right square of the diagram in Lemma \ref{klemma}, 
the second and third rows are exact, the maps $\Theta$ are the connecting homomorphisms, and $\omega$ is the Euler class by abuse of notation. 
\begin{lemma}[Borel \cite{Borel53}]\label{Borellemma}
For $n-k+1\leq r\leq n$, the connecting homomorphism $\Theta: H^{2r-1}(V_{n,k}(\mathbb{C})) \to H^{2r-2}(PV_{n,k}(\mathbb{C}))$ satisfies
\[
\hspace{5.8cm}
\Theta(x_{2r-1})=\binom{n}{r}\omega^{r-1}. 
\hspace{5.8cm}
\]
\end{lemma}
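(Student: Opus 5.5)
The plan is to identify the Gysin connecting homomorphism $\Theta$ with the transgression of the fibration in the left column of Diagram \eqref{pvnkdefdiag}, and then read off the answer from \eqref{tran-w-eq}. Write $D\larrow PV_{n,k}(\mathbb{C})$ for the disk bundle of the complex line bundle associated to \eqref{pvnkdefeq}. Its sphere bundle is $V_{n,k}(\mathbb{C})$, and $D\simeq PV_{n,k}(\mathbb{C})$. By definition of the Gysin sequence, $\Theta$ is the composite
\[
H^{2r-1}(V_{n,k}(\mathbb{C}))\stackrel{\delta}{\larrow} H^{2r}(D,V_{n,k}(\mathbb{C}))\cong H^{2r-2}(PV_{n,k}(\mathbb{C})),
\]
where the second map is the inverse of the Thom isomorphism $y\mapsto y\cup U$. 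Here $U$ is the Thom class, and its restriction to $D\simeq PV_{n,k}(\mathbb{C})$ is the Euler class $\omega$. So it suffices to show
\[
\delta(x_{2r-1})=\binom{n}{r}\,\omega^{r-1}\cup U .
\]

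Next I will compare with the universal case. The classifying map $\omega$ is covered by a map of pairs
\[
(D,V_{n,k}(\mathbb{C}))\larrow (D_\infty,S^\infty),
\]
where $D_\infty\simeq\mathbb{C}P^\infty$ is the disk bundle of the Hopf bundle and $S^\infty$ is its (contractible) sphere bundle. Since $S^\infty$ is contractible, $H^{\ast}(D_\infty,S^\infty)\cong H^\ast(\mathbb{C}P^\infty,\ast)$. Under this identification the universal Thom isomorphism is multiplication by $u$, so the relative class $u^r$ equals $u^{r-1}\cup U_\infty$. By naturality of the Thom class, $U_\infty$ pulls back to $U$, and hence the pullback of the relative class $u^r$ is $\omega^{r-1}\cup U$.

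The remaining step is to show that $\delta(x_{2r-1})$ equals the pullback of $\binom{n}{r}u^r\in H^{2r}(\mathbb{C}P^\infty,\ast)$. This is exactly the meaning of the transgression formula \eqref{tran-w-eq}, once the pair $(PV_{n,k}(\mathbb{C}),V_{n,k}(\mathbb{C}))$ in Diagram \eqref{transdiag} is replaced by the homotopy equivalent pair $(D,V_{n,k}(\mathbb{C}))$, so that the fibre inclusion becomes a genuine inclusion. The commutative Diagram \eqref{transdiag} expresses $\delta x_{2r-1}$ through $d_{2r}(x_{2r-1})\in E_{2r}^{2r,0}$ and the inclusion $E_{2r}^{2r,0}\hookrightarrow H^{2r}(PV_{n,k}(\mathbb{C}),V_{n,k}(\mathbb{C}))$. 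The latter map is injective, and $E_{2r}^{2r,0}$ is a quotient of $H^{2r}(\mathbb{C}P^\infty,\ast)$. It follows that the usual indeterminacy of the transgression disappears once we pass to $H^{2r}(D,V_{n,k}(\mathbb{C}))$, and $\delta x_{2r-1}$ is precisely the image of $\binom{n}{r}u^r$. Combining this with the previous paragraph gives $\delta x_{2r-1}=\binom{n}{r}\omega^{r-1}\cup U$, and applying the inverse Thom isomorphism yields $\Theta(x_{2r-1})=\binom{n}{r}\omega^{r-1}$.

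I expect the main obstacle to be the bookkeeping in this identification. The transgression is defined via the pair $(PV_{n,k}(\mathbb{C}),V_{n,k}(\mathbb{C}))$ over $(\mathbb{C}P^\infty,\ast)$, whereas the Gysin map uses the disk/sphere pair. One has to check that the homotopy equivalence of pairs is compatible with both the map to $(\mathbb{C}P^\infty,\ast)$ and the Thom class, and also keep track of signs and orientation conventions. I would handle this by working throughout with the mapping cylinder of $p_{n,k}$, identified with $D$, and using the single comparison map of pairs to $(D_\infty,S^\infty)$.
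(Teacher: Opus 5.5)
Your proposal is correct. It uses the same input as the paper, the transgression formula \eqref{tran-w-eq}, but it compares with the Gysin sequence at a different, and better, level.

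The paper works in absolute cohomology. It writes $\omega\Theta(x_{2r-1})=\binom{n}{r}\omega^r$ in $H^{2r}(PV_{n,k}(\mathbb{C}))$ and then cancels $\omega$. Taken literally, this does not determine $\Theta(x_{2r-1})$:
\begin{itemize}
\item by exactness of the Gysin sequence, the kernel of $\cup\omega\colon H^{2r-2}(PV_{n,k}(\mathbb{C}))\to H^{2r}(PV_{n,k}(\mathbb{C}))$ is exactly $\mathrm{im}\,\Theta$, so the left-hand side is always $0$;
\item the right-hand side is $0$ by Theorem~\ref{cohompvnkthm}, since $b_{n,k,r}\mid\binom{n}{r}$;
\item so the identity only says $\Theta(x_{2r-1})\in\ker(\cup\omega)$. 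That group contains $b_{n,k,r}\omega^{r-1}$, which is in general nonzero; it has infinite order when $r=n-k+1$.
\end{itemize}

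Your argument stays in $H^{2r}(D,V_{n,k}(\mathbb{C}))$ throughout. There the relative definition of the transgression gives $\delta x_{2r-1}=\Phi^\ast\bigl(\binom{n}{r}u^r\bigr)=\binom{n}{r}\,\omega^{r-1}\cup U$ with no indeterminacy. Moreover $y\mapsto y\cup U$ is an isomorphism, so dividing by the Thom class is legitimate.

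The price is the bookkeeping you already flag. One compatibility worth recording: model $\omega$ by the Borel construction $V_{n,k}(\mathbb{C})\times_{S^1}S^\infty\to\mathbb{C}P^\infty$, so the universal case is $S^\infty\times_{S^1}S^\infty$. Its two projections to $\mathbb{C}P^\infty$ classify the same principal circle bundle, hence are homotopic. This is what makes your identification $H^\ast(D_\infty,S^\infty)\cong H^\ast(\mathbb{C}P^\infty,\ast)$ agree with the one implicit in the transgression's base pair. Signs then only depend on the orientation convention for $U$, which is the same one used for $\Theta(\hat{\omega}_{2r-1})=\omega^{r-1}$ later.

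In short, your version is a precise form of the paper's one-line ``standard identification'', and it proves the stated coefficient rather than only its class modulo $\ker(\cup\omega)$.
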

\begin{proof}
\rev{This is the Gysin-sequence form of Borel's transgression formula. More explicitly, the Serre transgression for the left column of Diagram~\eqref{pvnkdefdiag} is $\tau(x_{2r-1})=\binom{n}{r}\omega^r$ by \eqref{tran-w-eq}. Under the standard identification between this transgression and cup product with the Euler class after the Gysin connecting homomorphism, $\omega\Theta(x_{2r-1})=\binom{n}{r}\omega^r$, which gives the stated formula.}
\end{proof}

The following lemma is key to studying the degrees of equivariant maps from spheres to complex Stiefel manifolds.
\begin{lemma}\label{neccondleema3}
For $n-k+1\leq r\leq n$, 
\[
{\rm deg}(f)\equiv \binom{n}{r}~{\rm mod}~m.
\]
\end{lemma}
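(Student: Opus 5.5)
Chase $x_{2r-1}$ around the right-hand square of Diagram~\eqref{gysinequifdia}. By Lemma~\ref{fslemma}, the given equivariant map factors as $f=f^s\circ\kappa$ with $f^s$ an $S^1$-equivariant map covering $\bar f\colon L^{2r-1}(m)\to PV_{n,k}(\mathbb{C})$, and ${\rm deg}(f)={\rm deg}(f^s)$. Hence $f^{s\ast}(x_{2r-1})={\rm deg}(f)\,\hat\omega_{2r-1}$.

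The Gysin connecting homomorphisms are natural with respect to the map of principal circle bundles $(f^s,\bar f)$. That map is the identity on the $S^1$ fibres, so the Euler classes correspond: $\bar f^\ast(\omega)=\omega_L$, the Euler class of $S^{2r}(m)\to L^{2r-1}(m)$. The right square of \eqref{gysinequifdia} therefore commutes, and
\[
\Theta\big({\rm deg}(f)\,\hat\omega_{2r-1}\big)=\Theta\big(f^{s\ast}(x_{2r-1})\big)=\bar f^\ast\Theta(x_{2r-1})=\binom{n}{r}\,\bar f^\ast(\omega)^{r-1}=\binom{n}{r}\,\omega_L^{r-1},
\]
where the third equality uses Lemma~\ref{Borellemma}.

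It remains to compute $\Theta(\hat\omega_{2r-1})$ in the middle row. Since $H^\ast(L^{2r-1}(m))$ has $H^{2j}\cong\mathbb{Z}/m\{\omega_L^j\}$ for $1\le j\le r-1$, the group $H^{2r-2}(L^{2r-1}(m))$ is $\mathbb{Z}/m$, generated by $\omega_L^{r-1}$. The map $\cup\omega\colon H^{2r-2}\to H^{2r}=0$ is zero, so $\Theta$ is surjective onto $\mathbb{Z}/m\{\omega_L^{r-1}\}$. Its kernel is the image of $p^\ast$, which is $m\hat\omega_{2r-1}$ by the convention $p^\ast(\Omega_L)=m\hat\omega_{2r-1}$. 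Since $H^{2r-1}(S^{2r}(m))\cong\mathbb{Z}\{\hat\omega_{2r-1}\}$ by Lemma~\ref{klemma}, we get $\Theta(\hat\omega_{2r-1})=\epsilon\,\omega_L^{r-1}$ for some unit $\epsilon\in(\mathbb{Z}/m)^\times$. Combining with the display gives
\[
\epsilon\,{\rm deg}(f)\equiv\binom{n}{r}\pmod m.
\]

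The main obstacle is pinning down $\epsilon=1$, not merely a unit; exactness alone does not give this. I would handle it by a normalization argument with a test case. The generator $\hat\omega_{2r-1}$ is fixed through $\kappa$, and all the structure is natural in the bundle. So take the model case $n=r$, $k=1$: here $V_{r,1}(\mathbb{C})=S^{2r-1}$, $PV_{r,1}(\mathbb{C})=\mathbb{C}P^{r-1}$, and $f=\mathrm{id}$ is equivariant of degree $1$ with $\binom{r}{r}=1$. The identity lifts, via Lemma~\ref{fslemma}, to $f^s\colon S^{2r}(m)\to S^{2r-1}$ covering the classifying map $L^{2r-1}(m)\to\mathbb{C}P^{r-1}$. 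In $\mathbb{C}P^{r-1}$, the Gysin sequence of the Hopf bundle gives $\Theta(x_{2r-1})=\omega^{r-1}$ exactly (Lemma~\ref{Borellemma}). Pulling back then forces $\Theta(\hat\omega_{2r-1})=\omega_L^{r-1}$, i.e.\ $\epsilon=1$.

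Alternatively, one can choose the sign of $\hat\omega_{2r-1}$ compatibly with the orientation conventions, in which case $\epsilon=\pm1$ and the sign is absorbed by the convention. Since $\hat\omega_{2r-1}$ is defined intrinsically through $\kappa$, the test-case computation fixes $\epsilon$ universally, and we conclude ${\rm deg}(f)\equiv\binom{n}{r}\bmod m$.
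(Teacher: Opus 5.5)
Your proof is correct and follows the paper's argument: you chase $x_{2r-1}$ around the right-hand square of Diagram~\eqref{gysinequifdia}, using Lemma~\ref{fslemma} and Lemma~\ref{Borellemma}. The one difference is that the paper simply reads off $\Theta(\hat\omega_{2r-1})=\omega^{r-1}$ from the exact second row, whereas you rightly note that exactness only gives a unit multiple, and you pin that unit to $1$ with the test case $(n,k)=(r,1)$, $f=\mathrm{id}$. This is legitimate because $\Theta(\hat\omega_{2r-1})$ depends only on the bundle $S^{2r}(m)\to L^{2r-1}(m)$, and $x_{2r-1}=\omega_{S^{2r-1}}$ there, as in the proof of Lemma~\ref{h=liftlemma}; your ``alternatively'' sign-absorption remark is unnecessary, since $\hat\omega_{2r-1}$ is fixed by $\kappa$ and cannot be rechosen.
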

\begin{proof}
\rev{With the generator $\hat{\omega}_{2r-1}\in H^{2r-1}(S^{2r}(m))$ chosen above, the second row of Diagram \eqref{gysinequifdia} gives $\Theta(\hat{\omega}_{2r-1})=\omega^{r-1}$.} We have
\[
\begin{split}
0&=\Theta f^{s\ast}(x_{2r-1})-\bar{f}^\ast\Theta(x_{2r-1})\\
&={\rm deg}(f^s)\Theta(\rev{\hat{\omega}_{2r-1}})-  \bar{f}^\ast\Big(\binom{n}{r} \omega^{r-1}\Big)\\
&=\Big({\rm deg}(f)-\binom{n}{r} \Big)\omega^{r-1} \in \mathbb{Z}/m\{\omega^{r-1}\},
\end{split}
\]
where the first equality follows from the lower right square of Diagram \eqref{gysinequifdia}, and the second and third equalities follow from Lemmas \ref{Borellemma} and \ref{fslemma} respectively. Then the congruence follows.
\end{proof}

Although they are not used in this paper, the following two lemmas are of independent interest and may be useful in the further study of equivariant maps from spheres to complex Stiefel manifolds.

\begin{lemma}\label{neccondleema1}
	For $n-k+2\leq r\leq n$, 
	\[
	m ~{\rm deg}(\bar{f})={\rm deg}(f)a_{n,k,r}.
	\]
\rev{Here ${\rm deg}(\bar f)$ is understood in the sense of the preceding convention.}
\end{lemma}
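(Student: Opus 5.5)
The plan is to chase the class $\bar{x}_{2r-1}$ around the upper-left part of Diagram \eqref{gysinequifdia}, that is, around the commutative square coming from the bundle map in Lemma \ref{fslemma}:
\[
p^\ast\circ \bar{f}^\ast = f^{s\ast}\circ p_{n,k}^\ast \colon H^{2r-1}(PV_{n,k}(\mathbb{C}))\larrow H^{2r-1}(S^{2r}(m)).
\]
Since $n-k+2\leq r\leq n$, the class $\bar{x}_{2r-1}$ exists by Theorem \ref{cohompvnkthm}. We evaluate both sides of this identity on $\bar{x}_{2r-1}$ and compare coefficients of the generator $\hat{\omega}_{2r-1}$.

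For the left-hand side, the convention fixed before Diagram \eqref{gysinequifdia} gives $\bar{f}^\ast(\bar{x}_{2r-1})={\rm deg}(\bar f)\,\Omega_L$. We also showed there that $p^\ast(\Omega_L)=m\hat{\omega}_{2r-1}$, which follows from $\kappa^\ast p^\ast=\rho^\ast$ and Lemma \ref{klemma}. Hence
\[
p^\ast\bar{f}^\ast(\bar{x}_{2r-1})=m\,{\rm deg}(\bar f)\,\hat{\omega}_{2r-1}.
\]

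For the right-hand side, Theorem \ref{cohompvnkthm}(4) gives $p_{n,k}^\ast(\bar{x}_{2r-1})=a_{n,k,r}x_{2r-1}$. By the definition of the degree of $f^s$ and Lemma \ref{fslemma}, we have $f^{s\ast}(x_{2r-1})={\rm deg}(f^s)\hat{\omega}_{2r-1}={\rm deg}(f)\hat{\omega}_{2r-1}$. Hence
\[
f^{s\ast}p_{n,k}^\ast(\bar{x}_{2r-1})=a_{n,k,r}\,{\rm deg}(f)\,\hat{\omega}_{2r-1}.
\]
Since $H^{2r-1}(S^{2r}(m))\cong\mathbb{Z}$ is torsion-free with generator $\hat{\omega}_{2r-1}$ (it is isomorphic to $H^{2r-1}(S^{2r-1})$ via $\kappa^\ast$), equating the two expressions gives $m\,{\rm deg}(\bar f)={\rm deg}(f)\,a_{n,k,r}$.

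The only delicate point is the naturality of the square itself: the pullbacks must go into the same group with consistent generators. This is guaranteed by the principal bundle map in Lemma \ref{fslemma}, where $p_{n,k}\circ f^s=\bar f\circ p$, together with the identification $p^\ast(\Omega_L)=m\hat{\omega}_{2r-1}$ already established before Diagram \eqref{gysinequifdia}. Everything else is bookkeeping with the sign conventions for $\Omega_L$ and $\hat{\omega}_{2r-1}$, which were chosen compatibly precisely so that no sign ambiguity arises.
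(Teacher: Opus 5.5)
Your proposal is correct and is essentially the paper's own proof: evaluate both sides of $p^\ast\circ\bar f^\ast=f^{s\ast}\circ p_{n,k}^\ast$ on $\bar x_{2r-1}$, use Theorem~\ref{cohompvnkthm}(4), ${\rm deg}(f^s)={\rm deg}(f)$ and $p^\ast(\Omega_L)=m\hat\omega_{2r-1}$, and compare coefficients in $H^{2r-1}(S^{2r}(m))\cong\mathbb{Z}$. One cosmetic slip: the square you use is the lower-left square of Diagram~\eqref{gysinequifdia} (the upper-left one is the $\rho^\ast$/$\kappa^\ast$ square), but this does no harm because you write the square out explicitly.
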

\begin{proof}
	By Theorem \ref{cohompvnkthm}, the lower left square of Diagram \eqref{gysinequifdia} implies that 
	\[
	(p^\ast\circ \bar{f}^\ast)(\bar{x}_{2r-1})=(f^{s\ast}\circ p_{n,k}^\ast)(\bar{x}_{2r-1})=f^{s\ast}(a_{n,k,r} x_{2r-1})=a_{n,k,r}f^{s\ast}(x_{2r-1})
	\]
	Since ${\rm deg}(f)={\rm deg}(f^s)$ \rev{from the proof of Lemma \ref{fslemma}} and $p^\ast$ is multiplication by $m$, we see that $m ~{\rm deg}(\bar{f})={\rm deg}(f)a_{n,k,r}$.
\end{proof}

\begin{lemma}\label{neccondleema2}
	For $n-k+1\leq r\leq n$, 
	\[
	m~|~b_{n,k,r}.
	\]
\end{lemma}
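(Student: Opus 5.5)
The plan is to exploit the induced map $\bar{f}: L^{2r-1}(m)\to PV_{n,k}(\mathbb{C})$ from Lemma \ref{fslemma}, which satisfies $\bar{f}^\ast(\omega)=\omega_L$. Here $\omega_L$ is the Euler class of the circle bundle $p: S^{2r}(m)\to L^{2r-1}(m)$, and it generates $H^2(L^{2r-1}(m))\cong \mathbb{Z}/m$. The point is that relations satisfied by $\omega$ in $PV_{n,k}(\mathbb{C})$ must be preserved under $\bar{f}^\ast$. Concretely, Theorem \ref{cohompvnkthm}(1)--(2) gives $b_{n,k,r}\,\omega^r=0$ in $H^{2r}(PV_{n,k}(\mathbb{C});\mathbb{Z})$ for every $n-k+1\leq r\leq n$.

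However, $H^{2r}(L^{2r-1}(m))=0$ for dimensional reasons, so the relation in degree $2r$ on $L^{2r-1}(m)$ is vacuous. To get information I will instead use the relations in degree $2s$ with $s\leq r-1$, where $H^{2s}(L^{2r-1}(m))\cong \mathbb{Z}/m\{\omega_L^s\}$. The statement is about $b_{n,k,r}$, so I need to organize the argument differently; there are two routes.

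\textbf{Route 1: work in degree $2r-2$ with $\Theta$.} The proof of Lemma \ref{neccondleema3} shows that the image of $\Theta: H^{2r-1}(V_{n,k}(\mathbb{C}))\to H^{2r-2}(PV_{n,k}(\mathbb{C}))$ is generated by $\binom{n}{r}\omega^{r-1}$. Exactness of the Gysin sequence then gives $\binom{n}{r}\omega^{r}=0$. That alone is not enough, so I would rather argue via degrees.

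\textbf{Route 2: degrees, with an induction on $r$.} Apply Lemma \ref{neccondleema3} to the equivariant map $f$ in degree $2r-1$, and more generally to the degree-$(2s-1)$ data. Take the Gysin diagram \eqref{gysinequifdia} but with the sphere of dimension $2r-1$ and cohomological degree $2s-1$, for $n-k+1\leq s\leq r-1$. In the top row $H^{2s-1}(S^{2r}(m))$ is $0$ when $s<r$, while $H^{2s-2}(L^{2r-1}(m))\cong\mathbb{Z}/m$ and $\cup\omega_L$ is injective from degree $2s-2$ to $2s$ when $s\le r-1$. Naturality gives $\bar f^\ast\Theta(x_{2s-1})=\Theta f^{s\ast}(x_{2s-1})=0$, so $\binom{n}{s}\omega_L^{s-1}=0$ in $\mathbb{Z}/m$. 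Hence $m\mid\binom{n}{s}$ for all $n-k+1\leq s\leq r-1$.

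For $s=r$ itself, Lemma \ref{neccondleema3} gives $\deg(f)\equiv\binom{n}{r}$ mod $m$. I therefore also need $m\mid \deg(f)$. I will obtain this from Lemma \ref{neccondleema1} together with the divisibility $b_{n,k,r-1}=a_{n,k,r}b_{n,k,r}$. Specifically, $m\deg(\bar f)=\deg(f)a_{n,k,r}$, and $m\mid b_{n,k,r-1}$ from the previous step. Combining, I expect $m\mid\deg(f)$ to follow when $\gcd(a_{n,k,r},m)$ is controlled.

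\textbf{Main obstacle.} This last step, forcing $m\mid\binom{n}{r}$ in the top degree, is the one I expect to be delicate. A fallback is to apply the lower-degree argument to the restriction of $f$ to an equivariant subsphere $S^{2r'-1}\subset S^{2r-1}$ with $r'>r$. That direction does not exist, though, so I would instead try the $\mathbb{Z}/m$-cohomology of $L^{2r-1}(m)$, or Lemma \ref{neccondleema2}'s analogue for $r-1$. If $r=n-k+1$, the set of $s$ is empty, and the claim reduces to $m\mid\binom{n}{n-k+1}$, which needs the top-degree argument directly.
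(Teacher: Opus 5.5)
There is a genuine gap, and it cannot be closed: the step you flag as the main obstacle aims at something false. By Lemma~\ref{neccondleema3}, $m\mid\deg(f)$ is equivalent to $m\mid\binom{n}{r}$. Neither holds in general, and neither does $m\mid b_{n,k,r}$ as literally written.

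Since $b_{n,k,n}=\binom{n}{n}=1$, the statement would forbid every $\mathbb{Z}/m$-equivariant map $S^{2n-1}\to V_{n,k}(\mathbb{C})$ with $m\ge 2$. Yet for $k=1$ the identity of $V_{n,1}(\mathbb{C})=S^{2n-1}$ is $S^1$-equivariant, hence $\mathbb{Z}/m$-equivariant for every $m$.

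Likewise, Theorem~\ref{n-k+1degfthm} gives equivariant maps $S^{2(n-k+1)-1}\to V_{n,k}(\mathbb{C})$ for every $m$, while $b_{n,k,n-k+1}=\binom{n}{n-k+1}$ is independent of $m$. That is exactly your case $r=n-k+1$ with ``empty set of $s$''.

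So no control of $\gcd(a_{n,k,r},m)$ can produce $m\mid\deg(f)$; Lemma~\ref{neccondleema1} only gives $m\mid \deg(f)\,a_{n,k,r}$. Indeed, when $h_{n,k}(r)=1$, Theorem~\ref{mainthm} says the equivariant degrees fill the entire residue class of $\binom{n}{r}$ mod $m$.

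What the paper's proof actually establishes is $m\mid b_{n,k,r-1}$. The map $\bar f^\ast$ sends $\omega^{r-1}$ to $\omega_L^{r-1}$, a generator of $H^{2r-2}(L^{2r-1}(m))\cong\mathbb{Z}/m$. By Theorem~\ref{cohompvnkthm}, the order of $\omega^{r-1}$ in $H^{2r-2}(PV_{n,k}(\mathbb{C}))$ is $b_{n,k,r-1}$; it is infinite when $r=n-k+1$, so that case is vacuous. The index $r$ in the statement is thus a misprint for $r-1$.

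Your Route~2 already proves this corrected statement, by the same mechanism as the paper: relations on $\omega$ in degrees at most $2r-2$ must survive under $\bar f^\ast$. You handle one binomial coefficient at a time, via naturality of $\Theta$ and Lemma~\ref{Borellemma}, rather than through the structure of $J(\omega)$. That is marginally more elementary, since it avoids the full computation in Theorem~\ref{cohompvnkthm}. You should stop there and flag the index.

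One small repair is needed. $H^1(S^{2r}(m))\cong\mathbb{Z}$, because the Gysin sequence identifies it with $\ker(\cup\omega_L\colon H^0\to H^2)=m\mathbb{Z}$. So your vanishing claim fails at $s=1$, which occurs when $n=k$. It is cleaner to pull back the relation $\binom{n}{s}\omega^s=\omega\cup\Theta(x_{2s-1})=0$ directly into $H^{2s}(L^{2r-1}(m))\cong\mathbb{Z}/m\{\omega_L^s\}$. That argument works uniformly for $1\le s\le r-1$.
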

\begin{proof}
	Consider $\bar{f}^\ast: H^\ast(PV_{n,k}(\mathbb{C}))\larrow H^{2r-2}(L^{2r-1}(m))\cong \mathbb{Z}/m\{\omega^{r-1}\}$ in Diagram \eqref{gysinequifdia}. Since $\bar{f}^\ast(\omega)=\omega$ by Lemma \ref{fslemma}, the order of $\omega^{r-1}\in H^{2r-2}(PV_{n,k}(\mathbb{C}))$ is divisible by $m$, where this order is $b_{n,k,r-1}$ by Theorem \ref{cohompvnkthm}.
\end{proof}


\section{A lifting problem}\label{generalsec}
In this section, we present a general method for constructing equivariant degrees from a given one, and then combine Lemma \ref{neccondleema3} to prove Theorem \ref{mainthm}.

Consider the lifting problem
\begin{gather}
\begin{aligned}
\xymatrix{
& PV_{n,k}(\mathbb{C}) \ar[d]^{\omega}  \\
L^{2r-1}(m) \ar[r]^<<<<<{\omega }  \ar@{-->}[ru]^{g}   
&  \mathbb{C}P^{\infty}
}
\end{aligned}
\label{rliftprodia}
\end{gather}
with $r>1$, 
where the maps $\omega$ classify the corresponding complex line bundles.
A solution $g$ induces a $\mathbb{Z}/m$-equivariant map
\begin{equation}\label{assuequigeq}
\tilde{g}: S^{2r-1}\stackrel{\kappa}{\larrow} S^{2r}(m)\stackrel{}{\larrow} V_{n,k}(\mathbb{C}),
\end{equation}
where $\kappa$ is defined in Lemma \ref{klemma} and the second map between the total manifolds of the circle bundles is induced from Diagram \eqref{rliftprodia}. 

We may modify a lifting $g$ in Diagram \eqref{rliftprodia} to obtain other liftings. To this end, let us first consider the relation between two liftings. If
\[
\hat{g}: L^{2r-1}(m) \larrow PV_{n,k}(\mathbb{C})
\]
is another lifting of $\omega $ in Diagram \eqref{rliftprodia}, then the difference $\hat{g}\circ\rho-g\circ \rho$, with $S^{2r-1}\stackrel{\rho}{\larrow} L^{2r-1}(m)$ the $\mathbb{Z}/m$-cover, factors through $V_{n,k}(\mathbb{C})$
\[
\hat{g}\circ\rho-g\circ\rho:  S^{2r-1} \stackrel{\delta}{\larrow}  V_{n,k}(\mathbb{C}) \stackrel{p_{n,k}}{\larrow}  PV_{n,k}(\mathbb{C}),
\]
for some map $\delta$.
As before $\hat{g}$ also induces a $\mathbb{Z}/m$-equivariant map
\[
\tilde{\hat{g}}: S^{2r-1}\stackrel{\kappa}{\larrow} S^{2r}(m)\stackrel{}{\larrow} V_{n,k}(\mathbb{C}).
\]

\begin{lemma}\label{rmodifydegreelemma}
The difference $\tilde{\hat{g}}-\tilde{g}= \delta$, and in particular, 
\[
{\rm deg}(\tilde{\hat{g}})-{\rm deg}(\tilde{g})={\rm deg}(\delta).
\]
\end{lemma}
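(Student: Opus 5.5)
We have to interpret the subtraction. Since $S^{2r-1}$ is a co-H-space, the difference $\hat{g}\circ\rho-g\circ\rho$ is taken in the group $\pi_{2r-1}(PV_{n,k}(\mathbb{C}))$. Because $r>1$, the long exact sequence of the circle bundle \eqref{pvnkdefeq} shows that
\[
p_{n,k\ast}:\pi_{2r-1}(V_{n,k}(\mathbb{C}))\larrow\pi_{2r-1}(PV_{n,k}(\mathbb{C}))
\]
is an isomorphism. This is what produces the factorization through $\delta$, and it makes $\delta$ unique up to homotopy.

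The plan is therefore to show that $p_{n,k}\circ\tilde{g}\simeq g\circ\rho$, and likewise $p_{n,k}\circ\tilde{\hat{g}}\simeq \hat{g}\circ\rho$.

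By construction \eqref{assuequigeq}, $\tilde{g}$ is the composite of $\kappa$ with the bundle map $S^{2r}(m)\to V_{n,k}(\mathbb{C})$ covering $g$. Composing with $p_{n,k}$ therefore gives $g\circ p\circ\kappa$. By Lemma \ref{klemma}, $p\circ\kappa=\rho$, so $p_{n,k}\circ\tilde{g}=g\circ\rho$ on the nose. The same holds for $\hat{g}$. Hence
\[
p_{n,k\ast}\big([\tilde{\hat{g}}]-[\tilde{g}]\big)=[\hat{g}\circ\rho]-[g\circ\rho]=p_{n,k\ast}[\delta],
\]
since $p_{n,k\ast}$ is a homomorphism. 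Injectivity of $p_{n,k\ast}$ in degree $2r-1$ then gives $\tilde{\hat{g}}-\tilde{g}=\delta$ in $\pi_{2r-1}(V_{n,k}(\mathbb{C}))$.

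For the degree statement, note that ${\rm deg}$ factors through the Hurewicz map. Indeed, for $\phi:S^{2r-1}\to V_{n,k}(\mathbb{C})$, ${\rm deg}(\phi)$ is the pairing of $x_{2r-1}$ with $hur[\phi]$. This is additive on $\pi_{2r-1}$, because the pinch map induces the sum on homology. So ${\rm deg}(\tilde{\hat{g}})-{\rm deg}(\tilde{g})={\rm deg}(\delta)$.

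The main obstacle is making sure the subtraction is meaningful. The maps in question are unbased, and $PV_{n,k}(\mathbb{C})$ has $\pi_1=0$: this follows from the bundle sequence, since $V_{n,k}(\mathbb{C})$ is simply connected and $\pi_1$ of the base sits between $\pi_1(V_{n,k}(\mathbb{C}))=0$ and $\pi_0(S^1)=0$. The spaces are therefore simply connected, so free and based homotopy classes agree and the group structure is unambiguous. I would check this first, and then check that the isomorphism $p_{n,k\ast}$ holds in degree $2r-1\geq 3$, which uses $\pi_i(S^1)=0$ for $i\geq 2$.
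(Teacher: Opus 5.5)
Your proposal follows the same route as the paper's proof. You identify $p_{n,k}\circ\tilde g=g\circ p\circ\kappa=g\circ\rho$ (and likewise for $\hat g$) via Lemma~\ref{klemma}, cancel $p_{n,k\ast}$, which is an isomorphism on $\pi_{2r-1}$ for $r>1$, and obtain additivity of the degree through the Hurewicz map.

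One correction to your well-definedness check: the paper allows $n=k$, and then $V_{n,n}(\mathbb{C})=\mathbb{U}(n)$ and $PV_{n,n}(\mathbb{C})=P\mathbb{U}(n)$ are not simply connected, with $\pi_1\cong\mathbb{Z}$ and $\mathbb{Z}/n$ respectively. The conclusion still holds, because both are Lie groups: $\pi_1$ acts trivially on $\pi_{2r-1}$, so free and based homotopy classes of maps out of $S^{2r-1}$ still coincide.
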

\begin{proof}
Note that (cf. the diagrams in Lemmas \ref{klemma} and \ref{fslemma})
\[
p_{n,k}\circ (\tilde{\hat{g}}-\tilde{g})=\hat{g}\circ\rho-g\circ\rho=p_{n,k}\circ \delta.
\]
Then since 
$
p_{n,k\ast}: \pi_{2r-1}(V_{n,k}(\mathbb{C}))\larrow \pi_{2r-1}(PV_{n,k}(\mathbb{C}))
$
is an isomorphism unless $n=k$ and $r=1$, we have
$
\tilde{\hat{g}}-\tilde{g}= \delta
$.
\end{proof}

Recall $h_{n,k}(r)$ is the $\mathit{r}$-th Hurewicz index of $V_{n,k}(\mathbb{C})$ defined in Section \ref{hnkrsec}.  
\begin{proposition}\label{rdegbarflemma}
Let $n-k+1\leq r\leq n$. 
Suppose $\tilde{g}$ (\ref{assuequigeq}) is a $\mathbb{Z}/m$-equivariant map induced from a solution $g$ of the lifting problem \eqref{rliftprodia}.
Then for any integer $a$ with
\[
a \equiv {\rm deg}(\tilde{g})~{\rm mod}~m\cdot h_{n,k}(r),
\]
there exists a $\mathbb{Z}/m$-equivariant map $f:S^{2r-1}\larrow V_{n,k}(\mathbb{C})$ such that 
\[
{\rm deg}(f)=a.
\]
\end{proposition}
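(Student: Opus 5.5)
The plan is to realize every admissible degree by modifying the lifting $g$ of Diagram~\eqref{rliftprodia} on the top cell of $L^{2r-1}(m)$, and then to read off the change in degree from Lemma~\ref{rmodifydegreelemma}. Write $a=\deg(\tilde g)+t\cdot m\cdot h_{n,k}(r)$ with $t\in\mathbb{Z}$. By Lemma~\ref{degfhlemma}, choose a map $\phi\colon S^{2r-1}\to V_{n,k}(\mathbb{C})$ with $\deg(\phi)=t\,h_{n,k}(r)$; this is a multiple of the class of degree $h_{n,k}(r)$. The goal is a new lifting $\hat g$ whose difference map $\delta$ satisfies $\deg(\delta)=m\deg(\phi)$. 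Then $f=\tilde{\hat g}$ is $\mathbb{Z}/m$-equivariant, being induced by a lifting, and Lemma~\ref{rmodifydegreelemma} gives $\deg(f)=\deg(\tilde g)+m\,t\,h_{n,k}(r)=a$.

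The construction uses the standard CW structure of $L^{2r-1}(m)$, whose top cell is $e^{2r-1}$. Let $c\colon L^{2r-1}(m)\to L^{2r-1}(m)\vee S^{2r-1}$ be the pinch map on the top cell. Define
\[
\hat g=(g\vee p_{n,k}\circ\phi)\circ c .
\]
This is still a lift of $\omega$, up to homotopy. Indeed, $\omega\circ p_{n,k}$ is null-homotopic, because the circle bundle pulled back to $V_{n,k}(\mathbb{C})$ is trivial. Consequently $\omega\circ\hat g\simeq(\omega\circ g\vee *)\circ c=\omega\circ g$. Since $\omega\colon PV_{n,k}(\mathbb{C})\to\mathbb{C}P^\infty$ is a fibration, we can replace $\hat g$ by a homotopic map that is a strict lift.

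The key computation is $\hat g\circ\rho$ in terms of $g\circ\rho$. The composite $S^{2r-1}\xrightarrow{\rho}L^{2r-1}(m)\xrightarrow{c}L^{2r-1}(m)\vee S^{2r-1}\to S^{2r-1}$ has degree $m$, since $\rho$ has degree $m$ on top cohomology. The coaction formula then gives
\[
\hat g\circ\rho\simeq g\circ\rho+m\,(p_{n,k}\circ\phi)
\]
in $\pi_{2r-1}(PV_{n,k}(\mathbb{C}))$. The possible cross terms vanish because $\pi_{2r-1}$ is abelian for $r>1$. They could also be controlled by lifting to the cover $S^{2r-1}$, which is simply connected, where the pinch pulls back to a map pinching $m$ disjoint cells.

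A cleaner way to make this step rigorous is to pass to the universal cover. The composite $(c\circ\rho)$ followed by the fold to $S^{2r-1}$ is a self-map of degree $\deg(\rho)=m$, so it equals $m\iota$. The remaining component of $\hat g\circ\rho$ is $g\circ\rho$ plus the $S^{2r-1}$-summand contribution, and this decomposition holds because the target group is abelian. So $\delta=m\phi$, and hence $\deg(\delta)=m\,t\,h_{n,k}(r)$.

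I expect this homotopy-addition step to be the main obstacle. One must check that precomposing the pinch with $\rho$ really yields the sum $g\rho+m\,p_{n,k}\phi$. Here $\pi_1(L^{2r-1}(m))$ acts on $\pi_{2r-1}(PV_{n,k}(\mathbb{C}))$, and the $m$ lifts of the top cell in $S^{2r-1}$ each contribute a translate $\gamma\cdot(p_{n,k}\phi)$ by elements $\gamma$ of $\pi_1$. To handle this, I would use that the $\pi_1$-action on $\pi_{2r-1}(PV_{n,k}(\mathbb{C}))\cong\pi_{2r-1}(V_{n,k}(\mathbb{C}))$ comes from the connected group $S^1$ acting on $V_{n,k}(\mathbb{C})$. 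It is therefore trivial up to homotopy, so every translate is homotopic to $p_{n,k}\phi$ and the sum is exactly $m\,p_{n,k}\phi$.
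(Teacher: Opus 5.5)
Your proposal is correct and follows the paper's route: the paper also sets $\hat g=g+(c^\ast\circ p_{n,k\ast})(\epsilon)$ using the top-cell coaction, notes that it is still a lift because $\omega\circ p_{n,k}$ is null-homotopic, computes $\hat g\circ\rho-g\circ\rho=p_{n,k\ast}(m\epsilon)$ from the coaction/fold diagram, and finishes with Lemma~\ref{rmodifydegreelemma}. Your treatment of the $m$ translated copies of the top cell is more careful than the paper's appeal to $\deg(\rho)=m$; the cleanest justification for it is that $PV_{n,k}(\mathbb{C})$ is simply connected when $n>k$ and is the Lie group $P\mathbb{U}(n)$ when $n=k$, so $\pi_1$ acts trivially on $\pi_{2r-1}$.
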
 
\begin{proof}
First, since the covering map $S^{2r-1}\stackrel{\rho}{\larrow} L^{2r-1}(m)$ is of degree $m$ at the top cell, the composite 
\[
\mathbb{Z}\hookrightarrow \pi_{2r-1}(V_{n,k}(\mathbb{C})) \stackrel{\rev{c}^\ast}{\larrow}  [ L^{2r-1}(m),V_{n,k}(\mathbb{C})]  \stackrel{\rho^\ast}{\larrow}\pi_{2r-1}(V_{n,k}(\mathbb{C})) \stackrel{\rev{\operatorname{pr}}}{\larrow}\mathbb{Z}
\]
is also of degree $m$, where \rev{$c:L^{2r-1}(m)\to S^{2r-1}$ is the pinch map to the top cell and $\operatorname{pr}$ is the canonical projection to the $\mathbb{Z}$-summand}. Hence, \rev{$c^\ast$} is injective, and for the map $\epsilon: S^{2r-1}\larrow V_{n,k}(\mathbb{C})$ representing an integer $t\in \mathbb{Z}$
\[
m \epsilon=\epsilon \circ \rev{c}\circ \rho ~ \ \  {\rm and } \ \ ~{\rm deg}(\epsilon)=t\cdot  h_{n,k}(r).
\]
Second, for the circle bundle (\ref{pvnkdefeq}) over $PV_{n,k}(\mathbb{C})$ we have the exact sequence
\[
0=H^1(L^{2r-1}(m))\larrow [ L^{2r-1}(m),V_{n,k}(\mathbb{C})]\stackrel{p_{n,k\ast}}{\larrow}  [ L^{2r-1}(m),PV_{n,k}(\mathbb{C})] \larrow H^{2}(L^{2r-1}(m))\cong\mathbb{Z}/m.
\]
\rev{Combining} the above, we obtain the \rev{commutative} diagram
\begin{gather*}
\begin{aligned}
\xymatrix{
\mathbb{Z}\ar@{^{(}->}[r]&
\pi_{2r-1}(V_{n,k}(\mathbb{C})) \ar@{^{(}->}[r]^<<<<<{\rev{c}^\ast} \ar[d]^{p_{n,k\ast}}_{\cong}&
 [ L^{2r-1}(m),V_{n,k}(\mathbb{C})]  \ar[r]^<<<<<{\rho^\ast}  \ar@{^{(}->}[d]^{p_{n,k\ast}}&
 \pi_{2r-1}(V_{n,k}(\mathbb{C}))   \ar[d]^{p_{n,k\ast}}_{\cong}\\
& \pi_{2r-1}(PV_{n,k}(\mathbb{C})) \ar[r]^<<<<{\rev{c}^\ast}&
 [ L^{2r-1}(m),PV_{n,k}(\mathbb{C})]  \ar[r]^<<<<{\rho^\ast}&
 \pi_{2r-1}(PV_{n,k}(\mathbb{C})).
}
\end{aligned}
\label{rsuffdia1}
\end{gather*}
By the standard coaction $L^{2r-1}(m)\larrow L^{2r-1}(m)\vee S^{2r-1}$, the subgroup $\mathbb{Z}$ acts on $[L^{2r-1}(m),PV_{n,k}(\mathbb{C})]$ via \rev{$c^\ast \circ p_{n,k\ast}$} or equivalently \rev{$p_{n,k\ast}\circ c^\ast$}.

Let
\[
\hat{g}=g+  (\rev{c}^\ast\circ p_{n,k\ast})(\epsilon),
\]
where $+$ denotes the action. Since $\omega\circ p_{n,k}$ is null homotopic, $\hat{g}$ is a lifting of $\omega$ in Diagram \eqref{rliftprodia}. 
To investigate the difference of $\hat{g}\circ\rho$ and $g\circ \rho$, consider the homotopy commutative diagram
\begin{gather*}
\begin{aligned}
\xymatrix{
S^{2r-1} \ar[r]^{\mu \ \  \   \ \ \ \  \ \  \ }  \ar[d]^{\rho} &   S^{2r-1}\vee \bigvee\limits_{i=1}^{m} S^{2r-1}  \ar[d]^{\rho \vee\nabla} \ar[r]^{\ \ \ (g\circ \rho)\vee \big(\bigvee\limits_{i=1}^{m}p_{n,k\ast}(\epsilon)\big)} &   PV_{n,k}(\mathbb{C}) \vee \big(\bigvee\limits_{i=1}^{m}  PV_{n,k}(\mathbb{C})\big) \ar[d]^{\nabla}  \\
L^{2r-1}(m) \ar[r]^{\mu \ \ \ \ \ \ \ } \ar@/_1.7pc/[rr]_<<<<<<<<<<<{\ \ \hat{g}=g+  (\rev{c}^\ast\circ  p_{n,k\ast})(\epsilon)}           &  L^{2r-1}(m)\vee S^{2r-1} \ar[r]^{\ \ \ \ \  \ \ \hat{g}=g\vee p_{n,k\ast}(\epsilon)} & PV_{n,k}(\mathbb{C}),  
}
\end{aligned}
\label{liftcoactdiag}
\end{gather*}
where $\mu$ is the co-action map or iterated co-multiplication, $\nabla$ is the folding map, and the left square homotopy commutes as ${\rm deg}(\rho)=m$. 
It follows that
\[
\hat{g}\circ\rho-g\circ \rho=\nabla\circ  \big((g\circ \rho)\vee \big(\bigvee\limits_{i=1}^{m}p_{n,k\ast}(\epsilon)\big)\big)\circ\mu -g\circ \rho
=g\circ \rho+m p_{n,k\ast}(\epsilon)-g\circ \rho=p_{n,k\ast}(m\epsilon).
\]
By Lemma \ref{rmodifydegreelemma}, the induced equivariant map $\tilde{\hat{g}}$
is of degree
\[
{\rm deg}(\tilde{\hat{g}})={\rm deg}(\tilde{g})+{\rm deg}(m\epsilon)={\rm deg}(\tilde{g})+m\cdot t\cdot h_{n,k}(r).
\]
The proof of the proposition is completed.
\end{proof}

\begin{corollary}\label{0inftyequicoro}
Let $m\in \rev{\mathbb{Z}_{>0}}$ and $\rev{r\in \{n-k+1,\ldots,n\}}$.
The set of degrees of $\mathbb{Z}/m$-equivariant maps from $S^{2r-1}$ to $V_{n,k}(\mathbb{C})$ can only be an infinite set or empty.\hfill $\Box$
\end{corollary}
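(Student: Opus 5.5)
The plan is to show that a single equivariant map already produces infinitely many. Suppose the degree set $D_m(S^{2r-1},V_{n,k}(\mathbb{C}))$ is nonempty, and let $f\colon S^{2r-1}\to V_{n,k}(\mathbb{C})$ be a $\mathbb{Z}/m$-equivariant map. I would then argue in three steps.

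\emph{Step 1.} First I want to produce a lift $g$ for the lifting problem \eqref{rliftprodia}. By Lemma~\ref{fslemma}, $f$ factors as $f=f^s\circ\kappa$, where $f^s$ is $S^1$-equivariant. This gives a principal $S^1$-bundle map covering some $\bar f\colon L^{2r-1}(m)\to PV_{n,k}(\mathbb{C})$. The bundle pulled back along $\omega\circ\bar f$ is the circle bundle $p\colon S^{2r}(m)\to L^{2r-1}(m)$. So $\omega\circ\bar f$ classifies the same line bundle as the map $\omega\colon L^{2r-1}(m)\to\mathbb{C}P^\infty$ in \eqref{rliftprodia}, and the two maps are therefore homotopic. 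Since $\omega\colon PV_{n,k}(\mathbb{C})\to\mathbb{C}P^\infty$ is a fibration up to homotopy, I can adjust $\bar f$ by the homotopy lifting property to get an honest lift $g$. Alternatively, I would simply interpret \eqref{rliftprodia} as commuting up to homotopy, which is evidently what the paper intends.

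\emph{Step 2.} Next I check that the equivariant map $\tilde g$ constructed from $g$ in \eqref{assuequigeq} has the right degree. The map $S^{2r}(m)\to V_{n,k}(\mathbb{C})$ induced from $g$ is, up to homotopy, the bundle map $f^s$, so $\tilde g\simeq f$. In any case, all that matters is that $\tilde g$ is some equivariant map with some degree $d$; by Lemma~\ref{neccondleema3} this degree satisfies $d\equiv\binom{n}{r}\bmod m$.

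\emph{Step 3.} Finally, apply Proposition~\ref{rdegbarflemma}. For every integer $a\equiv d \bmod m\cdot h_{n,k}(r)$ there is a $\mathbb{Z}/m$-equivariant map of degree $a$. Since $m\cdot h_{n,k}(r)$ is a positive integer, this is an infinite arithmetic progression, so the degree set is infinite. If instead no equivariant map exists, the set is empty, and this gives the dichotomy claimed.

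The main obstacle is Step 1: passing from an arbitrary equivariant map to a solution of \eqref{rliftprodia}. Lemma~\ref{fslemma} supplies $\bar f$, and the identification $(\omega\circ\bar f)^\ast(u)=\omega$ in $H^2(L^{2r-1}(m))$ makes $\omega\circ\bar f$ homotopic to $\omega$, because maps into $\mathbb{C}P^\infty$ are classified by $H^2$. I would settle the issue that \eqref{rliftprodia} is only meant up to homotopy by the homotopy lifting property of the bundle $PV_{n,k}(\mathbb{C})\to\mathbb{C}P^\infty$.
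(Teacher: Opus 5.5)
Your proposal is correct and is essentially the paper's argument. The paper records the corollary as an immediate consequence of Proposition~\ref{rdegbarflemma} and leaves implicit the passage from an arbitrary equivariant map to a (homotopy) solution of \eqref{rliftprodia}; that passage goes via Lemma~\ref{fslemma}, as in the proof of Lemma~\ref{trivilsubbunlemma}, and your Step~1 just spells it out.

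One caveat, which the paper shares: \eqref{rliftprodia} and Lemma~\ref{rmodifydegreelemma} need $r>1$, so Proposition~\ref{rdegbarflemma} does not really cover the case $n=k$, $r=1$. There you can argue directly instead: replace an equivariant $f\colon S^1\to\mathbb{U}(n)$ by $z\mapsto f(z)\,\mathrm{diag}(z^{mt},1,\ldots,1)$, which is still equivariant and has degree ${\rm deg}(f)+mt$.
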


\begin{corollary}\label{h=1equicoro}
Let $(n,k,r)$ be a triple of positive integers such that $h_{n,k}(r)=1$ and $n-k+1\leq r \leq n$. Suppose that the set of degrees of $\mathbb{Z}/m$-equivariant maps from $S^{2r-1}$ to $V_{n,k}(\mathbb{C})$ is not empty. Then the set is
\[
\Big\{a~|~a\equiv \binom{n}{r}~{\rm mod}~m\Big\}.
\]
\end{corollary}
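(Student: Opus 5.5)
The plan is to prove the two inclusions separately. Lemma \ref{neccondleema3} already supplies one of them, and Proposition \ref{rdegbarflemma} supplies the other once we know that every equivariant map arises from a solution of the lifting problem \eqref{rliftprodia}.

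\emph{Inclusion $\subseteq$.} Let $f$ be any $\mathbb{Z}/m$-equivariant map $S^{2r-1}\to V_{n,k}(\mathbb{C})$. Lemma \ref{neccondleema3} gives ${\rm deg}(f)\equiv\binom{n}{r}~{\rm mod}~m$, so the degree set lies in $\{a\mid a\equiv\binom{n}{r}~{\rm mod}~m\}$. This step does not use the hypothesis $h_{n,k}(r)=1$.

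\emph{Inclusion $\supseteq$.} Since the set is nonempty, choose an equivariant map $f$. By Lemma \ref{fslemma}, $f$ factors as $f=f^s\circ\kappa$, and there is an induced map $\bar f\colon L^{2r-1}(m)\to PV_{n,k}(\mathbb{C})$ covered by the $S^1$-equivariant map $f^s$. The bundle diagram in that lemma shows that the circle bundle $S^{2r}(m)\to L^{2r-1}(m)$ is the pullback of \eqref{pvnkdefeq} along $\bar f$. Equivalently, $\omega\circ\bar f$ classifies the line bundle associated with $\rho$, so $g:=\bar f$ solves \eqref{rliftprodia}.

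The only point needing care is that the equivariant map $\tilde g$ built from $g$ in \eqref{assuequigeq} is $f$ itself, or at least has the same degree. I would argue this by observing that the map of total spaces induced by the bundle map over $\bar f$ is exactly $f^s$, since both are the canonical map from the pullback bundle. Hence $\tilde g=f^s\circ\kappa=f$, and ${\rm deg}(\tilde g)={\rm deg}(f)$. If this identification needs more justification, there is a fallback: by Lemma \ref{neccondleema3}, ${\rm deg}(\tilde g)\equiv\binom{n}{r}~{\rm mod}~m$ holds anyway, and that congruence is all the argument requires.

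Now apply Proposition \ref{rdegbarflemma} with $h_{n,k}(r)=1$. It gives, for every $a\equiv{\rm deg}(\tilde g)\equiv\binom{n}{r}~{\rm mod}~m$, an equivariant map of degree $a$. This proves the reverse inclusion.

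I expect the main obstacle to be the case $r=1$, which the lifting setup \eqref{rliftprodia} excludes via the assumption $r>1$. Here $n-k+1\le r=1$ forces $k=n$, and $V_{n,n}(\mathbb{C})=\mathbb{U}(n)$. I would treat this case by hand. For $m\ge 2$, the lens space $L^1(m)$ is $S^1$, and an equivariant map $S^1\to \mathbb{U}(n)$ such as $z\mapsto z\cdot A$ has determinant degree $n$. Composing with loops in $\mathbb{U}(n)$ through pointwise multiplication by $\mathbb{Z}/m$-invariant loops, for example $z\mapsto {\rm diag}(z^{m},1,\ldots,1)$, shifts the degree by any multiple of $m$. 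This recovers all degrees $\equiv n=\binom{n}{1}~{\rm mod}~m$.
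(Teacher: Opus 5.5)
Your proposal is correct and is the paper's argument: Lemma~\ref{neccondleema3} gives $\subseteq$, and Proposition~\ref{rdegbarflemma} with $h_{n,k}(r)=1$, applied to the lift $g=\bar f$ supplied by Lemma~\ref{fslemma}, gives $\supseteq$. The paper leaves implicit that a given equivariant map yields a solution of \eqref{rliftprodia}; you make that step explicit.

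Your separate treatment of $r=1$, $k=n$ is a worthwhile addition, since the lifting machinery assumes $r>1$ and does not cover that case. There the Gysin proof of Lemma~\ref{neccondleema3} also breaks down ($H^0(L^1(m))\cong\mathbb{Z}$). Still, the congruence follows directly: $z\mapsto \det f(z)\,z^{-n}$ is $\mathbb{Z}/m$-invariant, so its degree is divisible by $m$.
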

\begin{proof}
This follows immediately from Proposition \ref{rdegbarflemma} and Lemma \ref{neccondleema3}.
\end{proof}

\begin{proof}[Proof of Theorem \ref{mainthm}]
The theorem is a combination of Lemma \ref{degfhlemma} and Corollary \ref{h=1equicoro}.
\end{proof}

\section{Special case: $r=n-k+1$}\label{case1sec}
There is a canonical bundle diagram 
\begin{gather*}
\begin{aligned}
\xymatrix{
\mathbb{Z}/m \ar[r]  \ar@{^{(}->}[d]   & 
S^{2r-1} \ar@{=}[d] \ar[r]^<<<<{\rho} &
L^{2r-1}(m)\ar[d]^{\tau}\\
S^1 \ar[r]   &
S^{2r-1} \ar[r]^<<<<{h}  &
\mathbb{C}P^{r-1},
}
\end{aligned}
\label{lencpmapdia}
\end{gather*}
where $\tau$ is the map of orbit spaces. 
Let $r=n-k+1$.
The lifting problem
\begin{gather*}
\begin{aligned}
\xymatrix{
&& PV_{n,k}(\mathbb{C}) \ar[d]^{\omega}  \\
L^{2(n-k+1)-1}(m) \ar[r]^<<<{\tau }  &
\mathbb{C}P^{n-k} \ar@{-->}[ru]^{g}   \ar@{^{(}->}[r]^{i}
&  \mathbb{C}P^{\infty},
}
\end{aligned}
\label{n-k+1liftprodia}
\end{gather*}
admits a solution $g$ since the classifying map $\omega$ is $(2(n-k)+1)$-connected and $\mathbb{C}P^{n-k}$ has dimension $2(n-k)$. Choose a smooth map $g$. Then the previous diagrams imply a principal bundle diagram  
\begin{gather*}
\begin{aligned}
\xymatrix{
\mathbb{Z}/m \ar[r] \ar@{^{(}->}[d]   &  S^{2(n-k+1)-1} \ar[r]^<<<<{\rho} \ar@{=}[d]  &  L^{2(n-k+1)-1}(m) \ar[d]^{\tau}\\
S^1 \ar@{=}[d]    \ar[r] &S^{2(n-k+1)-1} \ar[r]^{h} \ar[d]^{\tilde{g}}  &  \mathbb{C}P^{n-k} \ar[d]^{g}\\
S^1              \ar[r]   &  V_{n,k}(\mathbb{C}) \ar[r]^{p_{n,k}}   &   PV_{n,k}(\mathbb{C}),
}
\end{aligned}
\label{n-k+1bundleladderdia}
\end{gather*}
providing an $S^1$-equivariant map 
\[
\tilde{g}: S^{2(n-k+1)-1}\larrow V_{n,k}(\mathbb{C}).
\]
\begin{lemma}\label{n-k+1exlemma}
\rev{The map $\tilde g$ satisfies}
\[
{\rm deg}(\tilde{g})= \binom{n}{n-k+1}.
\]
\end{lemma}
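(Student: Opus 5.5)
The plan is to compute ${\rm deg}(\tilde g)$ directly with the Gysin sequence, applied to the middle and bottom rows of the bundle ladder. This is the same square used in Diagram \eqref{gysinequifdia}, with the circle bundle $h\colon S^{2r-1}\to\mathbb{C}P^{r-1}$ (where $r=n-k+1$) in place of $p$. Since $\tilde g$ is a map of principal $S^1$-bundles covering $g$, the Gysin sequences are natural. This gives the commutative square
\[
\Theta_h\circ \tilde g^\ast=g^\ast\circ\Theta\colon H^{2r-1}(V_{n,k}(\mathbb{C}))\larrow H^{2r-2}(\mathbb{C}P^{r-1}),
\]
where $\Theta$ is the Gysin connecting homomorphism of \eqref{pvnkdefeq}, and $\Theta_h$ is the one for the Hopf bundle $h$.

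The key steps, in order, are as follows.

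First, I will compute $\Theta_h$. In the Gysin sequence of the Hopf bundle, we have $H^{2r-1}(\mathbb{C}P^{r-1})=0$ and $H^{2r}(\mathbb{C}P^{r-1})=0$. Hence $\Theta_h\colon H^{2r-1}(S^{2r-1})\to H^{2r-2}(\mathbb{C}P^{r-1})\cong\mathbb{Z}\{u^{r-1}\}$ is an isomorphism. With the orientation of $S^{2r-1}$ chosen compatibly (the same convention that gave $\Theta(\hat\omega_{2r-1})=\omega^{r-1}$ in Lemma \ref{neccondleema3}), it sends $\omega_{S^{2r-1}}\mapsto u^{r-1}$.

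Second, since $g$ lifts the inclusion $i$, we have $g^\ast(\omega)=u$. Lemma \ref{Borellemma} then gives
\[
g^\ast\Theta(x_{2r-1})=\binom{n}{r}u^{r-1}.
\]

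Third, writing $\tilde g^\ast(x_{2r-1})={\rm deg}(\tilde g)\,\omega_{S^{2r-1}}$ and using the commutative square, we get ${\rm deg}(\tilde g)\,u^{r-1}=\binom{n}{r}u^{r-1}$ in the torsion-free group $\mathbb{Z}\{u^{r-1}\}$. Hence ${\rm deg}(\tilde g)=\binom{n}{n-k+1}$ exactly. Here the integrality of $H^{2r-2}(\mathbb{C}P^{r-1})$ upgrades the mod $m$ congruence of Lemma \ref{neccondleema3} to an equality.

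The main obstacle is sign and orientation bookkeeping. The Euler class convention in the Gysin sequence must be matched with the one in Lemma \ref{Borellemma}, and the orientation of $S^{2r-1}$ must be matched with the generator $u^{r-1}$. I would handle this by using the same convention as in Diagram \eqref{gysinequifdia}. If needed, I would also check the case $n=k=1$ (so $r=1$, $V_{1,1}=S^1$, and $\tilde g$ is the identity up to homotopy, of degree $\binom{1}{1}=1$). This pins down the signs.

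Finally, one also needs that ${\rm deg}(\tilde g)$ agrees with the degree of the induced $\mathbb{Z}/m$-equivariant map $\tilde g\circ{\rm id}$ in the top row of the ladder. This is immediate, because the middle vertical map there is the identity of $S^{2r-1}$.
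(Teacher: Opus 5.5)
Your proposal is correct and is essentially the paper's argument. The paper compares the transgressions $\tau(s_{2r-1})=u^{r}$ and $\tau(x_{2r-1})=\binom{n}{r}u^{r}$ (with $r=n-k+1$) in the Serre spectral sequences of the two fibrations over $\mathbb{C}P^{\infty}$ and uses naturality, while you use the equivalent Gysin-sequence form of the same formula (Lemma \ref{Borellemma}) for the circle bundles over $\mathbb{C}P^{r-1}$ and $PV_{n,k}(\mathbb{C})$; in both versions exactness comes from the comparison taking place in an infinite cyclic group ($H^{2r}(\mathbb{C}P^{\infty})$ for the paper, $H^{2r-2}(\mathbb{C}P^{r-1})$ for you), with the orientation of $S^{2r-1}$ fixed by the same convention.
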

\begin{proof}
Consider the morphism of Serre spectral sequences induced by the fibration diagram 
\begin{gather*}
\begin{aligned}
\xymatrix{
S^{2(n-k+1)-1} \ar[r]^{h} \ar[d]^{\tilde{g}}  &  \mathbb{C}P^{n-k} \ar[d]^{g} \ar[r]^{\rev{i}} & \mathbb{C}P^{\infty} \ar@{=}[d] \\
  V_{n,k}(\mathbb{C}) \ar[r]^{p_{n,k}}   &   PV_{n,k}(\mathbb{C}) \ar[r]^{\omega} & \mathbb{C}P^{\infty}.
}
\end{aligned}
\label{n-k+1cppvnkmapdia}
\end{gather*}
It is known that the transgressions satisfy
\[
\tau(s_{2(n-k+1)-1})=\omega^{n-k+1}, \ \  \tau(x_{2(n-k+1)-1})=\binom{n}{n-k+1} \omega^{n-k+1},
\]
for the two fibrations respectively, \rev{where $s_{2(n-k+1)-1}$ denotes the generator in the spectral sequence of the Hopf fibration, while $x_{2(n-k+1)-1}$ denotes the corresponding generator in the spectral sequence for $V_{n,k}(\mathbb{C})\to PV_{n,k}(\mathbb{C})$}. Hence, by the naturality of the transgression, the lemma follows.
\end{proof}
\begin{theorem}\label{n-k+1degfthm}
\rev{The equivariant degree set is}
 \[
\rev{D_m}(S^{2(n-k+1)-1}, V_{n,k}(\mathbb{C}))=\Big\{a~|~a\equiv \binom{n}{n-k+1}~{\rm mod}~m\Big\}.
\]
\end{theorem}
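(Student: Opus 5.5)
The plan is to combine the existence result just established with Corollary \ref{h=1equicoro}. That corollary applies when two conditions hold: $h_{n,k}(r)=1$, and the equivariant degree set is nonempty. Both follow from earlier results for $r=n-k+1$.

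First, Corollary \ref{1-3hnkrcorollary}(1) gives $h_{n,k}(n-k+1)=1$. Equivalently, $M_1=1$ divides $r$ by Theorem \ref{mainthmAW}. Geometrically, $V_{r,1}(\mathbb{C})=S^{2r-1}$, so the lifting problem \eqref{indexliftprodia} is solved by the identity and Lemma \ref{h=liftlemma} applies.

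Second, I need an actual $\mathbb{Z}/m$-equivariant map. The map $\tilde g$ constructed above is $S^1$-equivariant. It is the map of total spaces in a principal $S^1$-bundle ladder over $g$, so it commutes with the full $S^1$-action. Since $\mathbb{Z}/m$ acts on both $S^{2(n-k+1)-1}$ and $V_{n,k}(\mathbb{C})$ as the subgroup of $m$-th roots of unity in $S^1$, restricting the action shows $\tilde g$ is $\mathbb{Z}/m$-equivariant. By Lemma \ref{n-k+1exlemma} its degree is $\binom{n}{n-k+1}$. Hence $D_m(S^{2(n-k+1)-1},V_{n,k}(\mathbb{C}))$ is nonempty and contains $\binom{n}{n-k+1}$.

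With both hypotheses in place, the conclusion follows from either of two earlier results.
\begin{itemize}
\item Corollary \ref{h=1equicoro} gives the full set $\{a \mid a\equiv\binom{n}{r} \bmod m\}$ with $r=n-k+1$.
\item Alternatively, argue directly. Proposition \ref{rdegbarflemma} with $h=1$ produces every $a\equiv\deg(\tilde g) \bmod m$. For this, view $\tilde g$ as induced from the lift $g\circ\tau$ of \eqref{rliftprodia}: the composite $\omega\circ g\circ\tau = i\circ\tau$ classifies the line bundle of the cover $\rho$, so $g\circ\tau$ is a solution. Lemma \ref{neccondleema3} supplies the reverse inclusion.
\end{itemize}

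The main point to check is that this $\tilde g$ matches the map of \eqref{assuequigeq} induced from $g\circ\tau$, and is not merely some $S^1$-map. The bundle ladder shows the pullback of $V_{n,k}(\mathbb{C})\to PV_{n,k}(\mathbb{C})$ along $g\circ\tau$ is $S^{2r}(m)\to L^{2r-1}(m)$. This pullback restricts along $\kappa$ to $\tilde g$, because $\tau^\ast$ of the Hopf bundle is $S^{2r}(m)$ and $\kappa$ corresponds to the factorization of $h\circ\mathrm{id}$. If this identification is awkward, I would bypass it and rely on the first route: Corollary \ref{h=1equicoro} needs only nonemptiness.
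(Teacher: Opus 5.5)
Your proposal is correct and is essentially the paper's proof. Like the paper, you get $h_{n,k}(n-k+1)=1$ from Corollary~\ref{1-3hnkrcorollary}, get nonemptiness by restricting the $S^1$-equivariant map $\tilde g$ of Section~\ref{case1sec} to $\mathbb{Z}/m$, and then apply Corollary~\ref{h=1equicoro}. One point you share with the paper: the machinery behind Corollary~\ref{h=1equicoro} needs $r\geq 2$, so it does not literally cover $n=k$, $r=1$. That machinery is the lifting problem \eqref{rliftprodia} (posed for $r>1$), Lemma~\ref{rmodifydegreelemma} (which excludes $n=k$, $r=1$) and Proposition~\ref{rdegbarflemma} (which uses $H^1(L^{2r-1}(m))=0$). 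The case $n=k$ is nevertheless immediate. There $\deg(f)$ is the winding number of $\det\circ f$, and equivariance gives $\det f(\zeta z)=\zeta^{n}\det f(z)$, hence $\deg(f)\equiv n \pmod m$. Conversely, $z\mapsto \mathrm{diag}(z^{a-n+1},z,\ldots,z)$ realizes every such $a$.
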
 
\begin{proof}
By Corollary \ref{1-3hnkrcorollary}, the Hurewicz index \rev{$h_{n,k}(n-k+1)=1$}. \rev{The construction preceding Lemma~\ref{n-k+1exlemma} gives an $S^1$-equivariant, hence $\mathbb{Z}/m$-equivariant, map, so the equivariant degree set is nonempty.} Then the theorem follows from Corollary \ref{h=1equicoro}.
\end{proof}

\section{Special case: $r=n-k+2$}\label{case2sec}
We may apply the material of \cite[Section 5]{AGMP99} to investigate equivariant maps from a geometric point of view. Consider the lifting diagram
\begin{gather*}
\begin{aligned}
\xymatrix{
X  \ar@{-->}@/^1.3pc/[drr]^{\tilde{h}}   \ar@/_1.3pc/[ddr]^{f_{\lambda}} \ar@{-->}[dr]^{h} \\ 
     &PV_{n,k}(\mathbb{C}) \ar[r]^{f} \ar[d]^{\omega}   & B\mathbb{U}(n-k)\ar[d]^{Bi}\\
     & \mathbb{CP}^{\infty} \ar[r]^{f_0}   & B\mathbb{U}(n),
}
\end{aligned}
\label{liftnlinediag}
\end{gather*}
where the lower right square is the pullback in Diagram \eqref{pvnkdefdiag}, and $f_{\lambda}$ classifies a given complex line bundle $\lambda$ over a space $X$. Hence, $f_0\circ f_{\lambda}$ classifies the bundle $n\lambda$. Then there exists a lifting $h$ if and only if there exists a lifting $\tilde{h}$. The latter is equivalent to saying that the bundle $n\lambda$ has a trivial subbundle of complex dimension $k$. 

When $X=L^{2r-1}(m)$, we can relate the triviality of a subbundle of $n\lambda$ to the existence of certain equivariant map.
\begin{lemma}\label{trivilsubbunlemma}
\rev{Let $\lambda$ be the complex line bundle over $L^{2r-1}(m)$ associated to the canonical principal bundle $S^{2r-1}\stackrel{\rho}{\larrow} L^{2r-1}(m)$. The Whitney sum $n\lambda$ has a trivial subbundle of complex dimension $k$}
if and only if there exists a $\mathbb{Z}/m$-equivariant map
\[
f: S^{2r-1}\larrow V_{n,k}(\mathbb{C}).
\] 
\end{lemma}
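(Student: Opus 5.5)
The plan is to identify sections of the associated Stiefel bundle with equivariant maps directly, by the standard correspondence between sections of a bundle associated to a principal bundle and equivariant maps from the total space to the fibre. Since $\lambda = S^{2r-1}\times_{\mathbb{Z}/m}\mathbb{C}$, where $\mathbb{Z}/m$ acts on $\mathbb{C}$ through $\mathbb{Z}/m\subseteq S^1$, we have
\[
n\lambda = S^{2r-1}\times_{\mathbb{Z}/m}\mathbb{C}^n,
\]
with $\mathbb{Z}/m$ acting on $\mathbb{C}^n$ by scalars. Giving $n\lambda$ the Hermitian metric induced by the standard one on $\mathbb{C}^n$ (which is invariant under scalar unit multiplication), we form the bundle $V_k(n\lambda)$ of orthonormal $k$-frames in the fibres. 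It is the associated bundle
\[
V_k(n\lambda)=S^{2r-1}\times_{\mathbb{Z}/m}V_{n,k}(\mathbb{C}),
\]
where the action on $V_{n,k}(\mathbb{C})$ is the restriction of the $S^1$-action used throughout.

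The first step is to show that $n\lambda$ has a trivial rank-$k$ subbundle if and only if $V_k(n\lambda)\to L^{2r-1}(m)$ admits a section. Given a trivial subbundle, choose a trivialisation and apply Gram--Schmidt fibrewise to get $k$ orthonormal sections, hence a section of $V_k(n\lambda)$. Conversely, a section of $V_k(n\lambda)$ consists of $k$ pointwise orthonormal sections, which span a trivial subbundle of rank $k$.

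The second step is the standard bijection between sections of an associated bundle $P\times_G F$ and $G$-equivariant maps $P\to F$. Concretely, an equivariant $f\colon S^{2r-1}\to V_{n,k}(\mathbb{C})$ gives the section $[x]\mapsto [x,f(x)]$. This is well defined because
\[
[gx,f(gx)]=[gx,gf(x)]=[x,f(x)]
\]
for the diagonal quotient. Conversely, a section $s$ determines $f(x)$ as the unique $v$ with $s(\rho(x))=[x,v]$. Uniqueness holds because $\mathbb{Z}/m$ acts freely on $S^{2r-1}$, and equivariance and continuity follow by working in local trivialisations of the covering $\rho$.

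The main obstacle, which is mild, is the bookkeeping of the actions. One must check that $\mathbb{Z}/m$ acts on $S^{2r-1}$ and on $V_{n,k}(\mathbb{C})$ through the same character, namely scalar multiplication by $m$-th roots of unity, so that the frame bundle is exactly the associated bundle with fibre $V_{n,k}(\mathbb{C})$. If one instead used $\bar\lambda$, the inverse character would appear. Equivariance with respect to $\zeta$ versus $\zeta^{-1}$ can be interchanged by composing with complex conjugation on $V_{n,k}(\mathbb{C})$, so either convention yields the lemma.
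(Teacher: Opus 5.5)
Your argument is correct, but it takes a different route from the paper. The paper never constructs the frame bundle of $n\lambda$. It first uses the homotopy pullback \eqref{pvnkdefdiag} to say that a lift of $f_0\circ f_\lambda$ to $B\mathbb{U}(n-k)$ (i.e.\ a splitting $n\lambda\cong E\oplus\epsilon^k$) is equivalent to a lift of $f_\lambda=\omega$ to $PV_{n,k}(\mathbb{C})$, that is, a solution $g$ of \eqref{rliftprodia}. It then passes from $g$ to an equivariant map as in \eqref{assuequigeq}: the bundle map from $S^{2r}(m)$ to $V_{n,k}(\mathbb{C})$ covering $g$ is composed with $\kappa$. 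The reverse direction, from an equivariant $f$ back to a lift $\bar f$, is Lemma~\ref{fslemma}.

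You instead identify $V_k(n\lambda)$ with the associated bundle $S^{2r-1}\times_{\mathbb{Z}/m}V_{n,k}(\mathbb{C})$ and apply the correspondence between sections and equivariant maps. This is more elementary and gives an honest bijection between orthonormal $k$-frame fields in $n\lambda$ and $\mathbb{Z}/m$-equivariant maps, not just a correspondence up to homotopy. It needs neither the lifting property of a homotopy pullback nor the identification of the classifying map of $\lambda$ with the map $\omega$ classifying $S^{2r}(m)\to L^{2r-1}(m)$. Your remark on $\lambda$ versus $\bar\lambda$, handled by complex conjugation on $V_{n,k}(\mathbb{C})$, also settles a convention the paper leaves implicit.

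The paper's route has its own payoff. It puts the statement directly into the lifting-problem framework \eqref{rliftprodia} that Section~\ref{generalsec} uses to modify equivariant maps and control their degrees (Proposition~\ref{rdegbarflemma}). It also works in the classifying-space language needed for the $B\mathbb{SU}(n-k+1)\to B\mathbb{SU}(n-k)$ lifting criterion of \cite{AGMP99}, used in the proof of Theorem~\ref{n-k+2degfthm}.
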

\begin{proof}
By the previous discussion, the bundle $n\lambda$ has a trivial subbundle of complex dimension $k$ if and only if the classifying map $f_\lambda=\omega: L^{2r-1}(m)\larrow\mathbb{C}P^{\infty}$ of the line bundle $\lambda$ can be lifted to a map $L^{2r-1}(m)\larrow PV_{n,k}(\mathbb{C})$, or equivalently, the lifting problem \eqref{rliftprodia} has a solution. The latter is further equivalent to the existence of a $\mathbb{Z}/m$-equivariant map $S^{2r-1}\larrow V_{n,k}(\mathbb{C})$ by \eqref{assuequigeq} and Lemma \ref{fslemma}.
\end{proof}

\begin{example}\label{n-k+1ex}
In Section \ref{case1sec}, we have constructed an $S^1$-equivariant map 
\[
\tilde{g}: S^{2(n-k+1)-1}\larrow V_{n,k}(\mathbb{C}).
\]
\rev{The map $\tilde{g}$ constructed there is $S^1$-equivariant and does not depend on a choice of finite subgroup; hence, after restricting the $S^1$-action, it is $\mathbb{Z}/m$-equivariant for every $m\in \mathbb{Z}_{>0}$.} Let $\lambda$ be the complex line bundle associated to the circle bundle $S^{2(n-k+1)-1}\stackrel{\rho}{\larrow} L^{2(n-k+1)-1}(m)$. By Lemma \ref{trivilsubbunlemma}, it follows that the bundle $n\lambda$ has a trivial subbundle of complex dimension $k$, or equivalently, the classifying map $f_0\circ f_\lambda=f_0\circ \omega$ of $n\lambda$ can be lifted from $B\mathbb{U}(n)$ to $B\mathbb{U}(n-k)$.
\end{example}

\begin{example}\label{n-k+3ex}
Let $\lambda$ be the complex line bundle associated to the circle bundle $S^{2(n-k+3)-1}\stackrel{\rho}{\larrow} L^{2(n-k+3)-1}(m)$. Suppose that $\nu_2\binom{n}{n-k+1}=\nu_2(m)>0$, and $k$ is odd if $n$ is even. 
In \cite[Theorem 1.3]{AGMP99}, Astey-Gitler-Micha-Pastor showed that the bundle $n\lambda$ does not admit a trivial subbundle of complex dimension $k$. By Lemma \ref{trivilsubbunlemma}, this is equivalent to saying that there does not exist a $\mathbb{Z}/m$-equivariant map 
\[
S^{2(n-k+3)-1}\larrow V_{n,k}(\mathbb{C}).
\]
\end{example}

Now we specify to the special case when $r=n-k+2$. 

\begin{theorem}\label{n-k+2degfthm}
Suppose $n-k\geq 2$ is even, and 
\[
\binom{n}{1}\equiv \binom{n}{k-1}\equiv \binom{n}{k}\equiv 0~{\rm mod}~m.
\]
Then there is a $\mathbb{Z}/m$-equivariant map $f:S^{2(n-k+2)-1}\larrow V_{n,k}(\mathbb{C})$.
Additionally, when the above conditions hold, 
 \[
\rev{D_m}(S^{2(n-k+2)-1}, V_{n,k}(\mathbb{C}))=\Big\{a~|~a\equiv \binom{n}{n-k+2}~{\rm mod}~m\Big\}.
\]
\end{theorem}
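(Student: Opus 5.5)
Write $r=n-k+2$; the hypothesis that $n-k\geq 2$ is even makes $r$ even. The plan is to solve the lifting problem \eqref{rliftprodia} for $X=L^{2r-1}(m)$ by obstruction theory, and then appeal to Corollary~\ref{h=1equicoro}. Corollary~\ref{1-3hnkrcorollary}(2) gives $h_{n,k}(r)=1$ because $r=n-k+2$ is even. Once a single equivariant map exists, Corollary~\ref{h=1equicoro} yields the degree set $\{a\mid a\equiv\binom{n}{n-k+2} \bmod m\}$. So the only real work is existence, which is equivalent by Lemma~\ref{trivilsubbunlemma} to finding a lift $g\colon L^{2r-1}(m)\to PV_{n,k}(\mathbb{C})$ of $\omega$.

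\emph{Step 1 (lift through the $(2r-4)$-skeleton).} The fibre of $\omega\colon PV_{n,k}(\mathbb{C})\to\mathbb{C}P^\infty$ is $V_{n,k}(\mathbb{C})$. This fibre is $(2(n-k)+1)=(2r-3)$-connected, and its first homotopy group is $\pi_{2r-3}(V_{n,k}(\mathbb{C}))\cong\mathbb{Z}$. A lift therefore exists on the $(2r-3)$-skeleton. I will use the explicit lift from Section~\ref{case1sec}: compose $L^{2r-1}(m)\to\mathbb{C}P^{r-1}$ with a lift over $\mathbb{C}P^{n-k}=\mathbb{C}P^{r-2}$.

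\emph{Step 2 (primary obstruction in $H^{2r-2}$).} The first obstruction lies in $H^{2r-2}(L^{2r-1}(m);\pi_{2r-3}V_{n,k})\cong\mathbb{Z}/m$. Its universal form is the obstruction to extending the lift over $\mathbb{C}P^{r-1}$, which is the class $\binom{n}{n-k+1}u^{r-1}$ coming from the transgression \eqref{tran-w-eq}. Equivalently, $\binom{n}{r-1}u^{r-1}$ must vanish in $J(\omega)$; the relevant relation is $b_{n,k,r-1}\omega^{r-1}=0$ in Theorem~\ref{cohompvnkthm}. Pulled back to the lens space, this obstruction becomes $\binom{n}{k-1}\omega^{r-1}\in\mathbb{Z}/m$, since $\binom{n}{n-k+1}=\binom{n}{k-1}$. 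It vanishes by the hypothesis $\binom{n}{k-1}\equiv 0 \bmod m$.

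\emph{Step 3 (secondary obstruction in $H^{2r-1}$).} The last obstruction lies in $H^{2r-1}(L^{2r-1}(m);\pi_{2r-2}V_{n,k}(\mathbb{C}))$. For $n-k\geq 1$, $\pi_{2r-2}(V_{n,k}(\mathbb{C}))=\pi_{2(n-k)+2}(V_{n,k})$ is $\mathbb{Z}/2$ or $0$, as computed from the fibration with base $S^{2n-1}$ or from $W_{n-k+2,2}$. Here I would follow \cite[Section~5]{AGMP99}. One can change the lift on the $(2r-3)$-skeleton by elements of $H^{2r-3}(L;\mathbb{Z})=0$, and on the $(2r-2)$-cell by elements of $H^{2r-2}(L;\mathbb{Z})=\mathbb{Z}/m$, since the cells are indexed by $\mathbb{Z}/m$. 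The secondary obstruction is then computed by a $Sq^2$-type operation, namely the k-invariant of $V_{n,k}$ in its first two stages. Concretely, it is the image of $\binom{n}{n-k+1}$ and $\binom{n}{n-k}=\binom{n}{k}$ under $Sq^2$ acting on the appropriate class. When $r$ is even, the relevant operation $Sq^2\colon H^{2r-3}\to H^{2r-1}$ of lens-space classes is multiplication by $r-1$ (odd), so the obstruction becomes a mod-$2$ reduction of a combination of $\binom{n}{k}$ and $\binom{n}{k-1}$. The hypothesis $\binom{n}{1}\equiv 0 \bmod m$ is needed because $\omega$ itself is $n$ times a line bundle in the relevant $K$-theory. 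In particular, $m\mid n$ makes the class $nu$, which is $c_1$ of $n\lambda$, vanish. This vanishing kills the indeterminacy twist. With all three congruences, the obstruction is zero when $m$ is even; when $m$ is odd the group $H^{2r-1}(L;\mathbb{Z}/2)=0$, so the obstruction vanishes automatically.

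The main obstacle is Step 3: identifying the secondary obstruction precisely. I would first try to quote \cite[Section~5]{AGMP99}, which treats the triviality of subbundles of $n\lambda$ over lens spaces in low codimension. Failing that, I would compute the k-invariant of the two-stage Postnikov piece of $V_{n,k}(\mathbb{C})\to PV_{n,k}(\mathbb{C})$, as the $Sq^2$ action on $x_{2r-3}$ in $V_{n,k}$, which is nonzero exactly when $n-k$ is even. I would then pull it back along the lift and check that it reduces to the binomial coefficients above.
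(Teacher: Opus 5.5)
Your reduction of the degree-set statement to existence (via Lemma~\ref{trivilsubbunlemma}, Corollary~\ref{1-3hnkrcorollary}(2) and Corollary~\ref{h=1equicoro}) matches the paper. Step 2 correctly identifies the primary obstruction as $\binom{n}{k-1}\alpha^{r-1}=c_{n-k+1}(n\lambda)$. Two small slips: the fibre $V_{n,k}(\mathbb{C})$ is $(2r-4)$-connected (it is the map $\omega$ that is $(2r-3)$-connected), and the Section~\ref{case1sec} lift must be restricted to the skeleton $L^{2r-3}(m)\to\mathbb{C}P^{r-2}$.

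The genuine gap is Step 3, which carries all the content of the existence claim and is only asserted. Since $H^{2r-3}(L^{2r-1}(m);\mathbb{Z})=0$, the secondary obstruction has no indeterminacy. It is a single well-defined element of $H^{2r-1}(L^{2r-1}(m);\mathbb{Z}/2)\cong\mathbb{Z}/2$, so there is no ``twist'' for $m\mid n$ to kill; what is needed is the actual value of that element. The supporting claims you give are also wrong. For $m$ even, write $a\in H^1$ and $b=\bar\alpha\in H^2$ in $H^\ast(L;\mathbb{Z}/2)$; then $Sq^2(ab^{r-2})=(r-2)ab^{r-1}$, which vanishes for $r$ even rather than being multiplication by an odd number. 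Moreover, $H^{2r-1}(L^{2r-1}(m);\mathbb{Z}/2)\cong\mathbb{Z}/2$ for every $m$ (it is the top class of a closed orientable manifold), so odd $m$ is not automatic. It can be rescued by pulling back along the odd-degree cover $\rho$, over which the lift trivially exists. For even $m$, nothing in the proposal shows why $m\mid\binom{n}{k}$ and $m\mid n$ force the obstruction to vanish. The phrase ``the image of the binomial coefficients under $Sq^2$'' is not a computation.

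The paper avoids computing this obstruction by hand. Because $r=n-(k-1)+1$, the $S^1$-equivariant map of Section~\ref{case1sec} into $V_{n,k-1}(\mathbb{C})$ gives, via Example~\ref{n-k+1ex}, a splitting $n\lambda\cong E\oplus\epsilon^{k-1}$ with $E$ of rank $n-k+1$. Equivalently, it factors your lifting problem through $PV_{n,k}(\mathbb{C})\to PV_{n,k-1}(\mathbb{C})$, whose fibre $S^{2(n-k)+1}$ is the sphere bundle of $E$. The hypothesis $m\mid n$ gives $c_1(E)=n\alpha=0$, so $E$ reduces to an $SU(n-k+1)$-bundle. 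Then \cite[Corollary 5.12]{AGMP99} states that for such a bundle over $L^{2(n-k+2)-1}(m)$ with $n-k$ even, the vanishing $c_{n-k}(E)=c_{n-k+1}(E)=0$ (that is, $m\mid\binom{n}{k}$ and $m\mid\binom{n}{k-1}$) suffices to split off a trivial line. That corollary is exactly the secondary-obstruction computation your Step 3 lacks. To invoke it you need this $SU$-bundle reduction, not just a general pointer to Section 5 of \cite{AGMP99}.
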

\begin{proof}
The Chern class of $n\lambda$ over $L^{2(n-k+2)-1}(m)$ is 
\[
c(n\lambda)=(1+\rev{\alpha})^n,
\]
where the generator \rev{$\alpha \in H^{2}(L^{2(n-k+2)-1}(m);\mathbb{Z})\cong \mathbb{Z}/m$} is the Euler class of the line bundle $\lambda$. Then the assumption $\binom{n}{1}\equiv \binom{n}{k-1}\equiv \binom{n}{k}\equiv 0~{\rm mod}~m$ is  equivalent to $c_1(n\lambda)=c_{n-k}(n\lambda)=c_{n-k+1}(n\lambda)=0$.

Consider the $S^1$-equivariant map $\tilde{g}: S^{2(n-k+2)-1}\larrow V_{n,k-1}(\mathbb{C})$ constructed in Section \ref{case1sec}. By Example \ref{n-k+1ex}, the existence of $\tilde{g}$ implies that the classifying map $f_0\circ f_\lambda=f_0\circ \omega$ of $n\lambda$ can be lifted to $B\mathbb{U}(n-k+1)$. It can be further lifted to \rev{$B\mathbb{SU}(n-k+1)$} since $c_1(n\lambda)=0$. 
Then as $c_{n-k}(n\lambda)=c_{n-k+1}(n\lambda)=0$, the classifying map $f_0\circ f_\lambda=f_0\circ \omega$ can be lifted further to \rev{$B\mathbb{SU}(n-k)$} by \cite[Corollary 5.12]{AGMP99}, or equivalently, the bundle $n\lambda$ over $L^{2(n-k+2)-1}(m)$ has a trivial subbundle of complex dimension $k$. By Lemma \ref{trivilsubbunlemma}, this is equivalent to the existence of a $\mathbb{Z}/m$-equivariant map $S^{2(n-k+2)-1}\larrow V_{n,k}(\mathbb{C})$.

The second statement follows from Corollary \ref{h=1equicoro} with the fact that when $n-k$ is even \rev{$h_{n,k}(n-k+2)=1$} by Corollary \ref{1-3hnkrcorollary}.
\end{proof}

We conclude this paper with a problem, taking the following proposition as a starting point.
\begin{proposition}\label{noncan-prop}
	Let $(n,k,r)$ be a triple of positive integers such that $n-k+1\leq r\leq n$ and 
	\[
	M_{r-(n-k)}~|~r.
	\]
	If there exists an isomorphism of complex vector bundles over $L^{2r-1}(m)$
	\[
	n\lambda\cong E\oplus \epsilon^{k},
	\]
	where $\epsilon^{k}$ denotes the trivial bundle of complex dimension $k$, then \rev{there is an infinite sequence of decompositions of this form, i.e., there exist complex $(n-k)$-bundles $E_i$ ($i\in \mathbb{N}$) over $L^{2r-1}(m)$ such that}
	\[
	n\lambda\cong E_i\oplus \epsilon^{k}~\text{for any $i$}.
	\]
\end{proposition}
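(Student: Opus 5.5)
The plan is to translate the bundle decomposition into equivariant maps, use Theorem \ref{mainthm} to get infinitely many of them with distinct degrees, and then separate the resulting complements $E_i$ by a degree invariant.

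\emph{From bundles to maps.} An isomorphism $n\lambda\cong E\oplus\epsilon^k$ is, by the discussion preceding Lemma \ref{trivilsubbunlemma}, a lifting $h:L^{2r-1}(m)\to PV_{n,k}(\mathbb{C})$ of $\omega$ in \eqref{rliftprodia}. The complement is $E\cong h^\ast f^\ast\gamma$, where $f$ is the map in Diagram \eqref{pvnkdefdiag} and $\gamma$ is the universal bundle over $B\mathbb{U}(n-k)$; indeed, the classifying map of $E$ is $f\circ h$. Conversely, any lifting $\hat g$ of $\omega$ gives a decomposition $n\lambda\cong \hat g^\ast f^\ast\gamma\oplus\epsilon^k$.

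\emph{Producing infinitely many liftings.} By Theorem \ref{mainthmAW}, the hypothesis $M_{r-(n-k)}\mid r$ gives $h_{n,k}(r)=1$. Proposition \ref{rdegbarflemma}, applied to $g=h$, then gives liftings $g_i=g+(c^\ast\circ p_{n,k\ast})(\epsilon_i)$ for $i\in\mathbb{N}$, where $\epsilon_i$ represents $t=i$. By Lemma \ref{rmodifydegreelemma}, the induced equivariant maps satisfy $\deg(\tilde g_i)=\deg(\tilde g)+mi$, so these degrees are pairwise distinct. Put $E_i:=g_i^\ast f^\ast\gamma$; each $E_i$ satisfies $n\lambda\cong E_i\oplus\epsilon^k$.

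\emph{Distinguishing the $E_i$ (the main obstacle).} The statement only asks for bundles $E_i$ with $n\lambda\cong E_i\oplus\epsilon^k$, so the sequence $E_i$ above already verifies the proposition as literally phrased. The intended content, however, is that the $E_i$ are pairwise non-isomorphic, or at least that the splittings are pairwise distinct. I would try to separate the $E_i$ in three steps, in this order.
\begin{itemize}
\item First, the Chern classes alone cannot do it: they are determined by $(1+\alpha)^n$, so $c(E_i)=c(n\lambda)$ for every $i$.
\item Second, I would separate the \emph{decompositions}, that is, the liftings $g_i$ up to vertical homotopy. Here $\tilde g_i$ is the map on total spaces induced by $g_i$, and its degree is an invariant of the vertical homotopy class of $g_i$. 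Since $\deg(\tilde g_i)=\deg(\tilde g)+mi$ are distinct, the $g_i$ are pairwise distinct, hence so are the corresponding splittings.
\item Third, to separate the isomorphism classes of the $E_i$ themselves, I would compare the classifying maps $f\circ g_i$ in $[L^{2r-1}(m),B\mathbb{U}(n-k)]$. The modification acts through the top cell by the image of $m\epsilon_i$ under $\pi_{2r-1}(V_{n,k}(\mathbb{C}))\to\pi_{2r-2}(\mathbb{U}(n-k))$, that is, under the boundary map of the fibration $\mathbb{U}(n-k)\to\mathbb{U}(n)\to V_{n,k}(\mathbb{C})$. If this image has infinite order, a $K$-theoretic or $e$-invariant computation could separate the $E_i$. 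But when $2r-2$ lies in the stable range this group is finite, and the $E_i$ could then repeat.
\end{itemize}
Because of this, I expect the proof to settle for distinct decompositions rather than distinct bundles. This matches the paper's phrasing, which leaves the finer question as the open problem that follows.
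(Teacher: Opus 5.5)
Your argument is correct and is essentially the paper's proof. The paper goes through Lemma~\ref{trivilsubbunlemma} and Theorem~\ref{mainthm} to obtain equivariant maps $f_i$ of pairwise distinct degrees and reads off a decomposition from each; you work directly with the liftings $g_i$ built in the proof of Proposition~\ref{rdegbarflemma}, which is the same mechanism one level down.

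One correction to your speculative third bullet: $\pi_{2r-2}(\mathbb{U}(n-k))$ is always finite, since it is rationally trivial in even degrees. Hence $[L^{2r-1}(m),B\mathbb{U}(n-k)]$ is finite, as the paper remarks after the proof, and the $E_i$ must repeat. Pairwise non-isomorphic complements are therefore impossible, not merely unproven, and your ``infinite order'' alternative never occurs.
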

\begin{proof}
By Lemma \ref{trivilsubbunlemma}, the assumption $n\lambda\cong E\oplus \epsilon^{k}$ implies that there exists a $\mathbb{Z}/m$-equivariant map $S^{2r-1}\larrow V_{n,k}(\mathbb{C})$. Then Theorem \ref{mainthm} implies that there exist infinitely many $\mathbb{Z}/m$-equivariant maps $f_i : S^{2r-1} \larrow V_{n,k}(\mathbb{C})$ whose equivariant degrees are pairwise distinct. By Lemma \ref{trivilsubbunlemma} again, each $f_i$ determines a decomposition $n\lambda\cong E_i\oplus \epsilon^{k}$ for some bundle $E_i$.
\end{proof}

Under the assumption of Proposition \ref{noncan-prop}, we obtain infinitely many classifying maps
\[
g_i: L^{2r-1}(m)\larrow B\mathbb{U}(n-k)
\]
classifying the bundles $E_i$, \rev{arising from equivariant maps whose degrees are pairwise distinct}. However, since $L^{2r-1}(m)$ is rationally homotopy equivalent to $S^{2r-1}$ and $\pi_{2r-1}(B\mathbb{U}(n-k))\cong \pi_{2r-2}(\mathbb{U}(n-k))$ is a torsion group, the set of homotopy classes $[L^{2r-1}(m),B\mathbb{U}(n-k)]$ is finite. This raises the following problem.
\begin{problem}\label{prob}
	For the bundles $E_i$ ($i\in \mathbb{N}$) constructed in Proposition \ref{noncan-prop},
\begin{itemize}
	\item[(1).] For which pairs $(i,j)$ does $E_i\cong E_j$ hold?
	\item[(2).] Does there exist a complex bundle $F$ over $L^{2r-1}(m)$ of rank $(n-k)$ such that $F\not\cong E_i$ for any $i$?
\end{itemize}
	\end{problem}




\begin{thebibliography}{10}
\bibitem{AW65} J. F. Adams and G. Walker, {\em On complex Stiefel manifolds}, Math. Proc. Camb. Phil. Soc. \textbf{61} (1965), 81-103.

\bibitem{AT60} M. F. Atiyah and J. A. Todd, {\em On complex Stiefel manifolds}, Math. Proc. Camb. Phil. Soc. \textbf{56} (1960), 342-353.

\bibitem{AGMP99} L. Astey, S. Gitler, E. Micha and G. Pastor. {\em Cohomology of complex projective Stiefel manifolds}, Canad. J. Math. \textbf{51} (1999), no. 5, 897-914. 

\bibitem{Borel53} A. Borel, {\em Sur la cohomologie des espaces fibres principaux et des espaces homogenes de groupes de Lie compacts}, Ann of Math. \textbf{57} (1953), 115-207. 


\bibitem{Brouwer11} L. E. J. Brouwer, {\em Uber Abbildung von Mannigfaltigkeiten}, Math. Ann. \textbf{71} (1911), 97-115.

\bibitem{Ch} C. Chevalley, {\em The Betti numbers of the exceptional Lie groups},
Proceedings of the International Congress of Mathematicians, Cambridge,
Mass., 1950, Providence, AMS, vol. 2 (1952), 21-24.

\bibitem{Duan20} H. Duan, The Cohomology of Projective Unitary Groups.~Proc.
Steklov Inst. Math.~326, 157--176 (2024).

\bibitem{DL17} H. Duan and X. Lin, {\em Topology of unitary groups and the prime orders of binomial coefficients}, Sci. China Math. \textbf{60} (2017), no. 9, 1543-1548. 

\bibitem{DW03} H. Duan and S. C. Wang, {\em The degrees of maps between manifolds}, Math. Z. \textbf{244} (2003), 67-89.

\bibitem{DW04} H. Duan and S. C. Wang, Non-zero degree maps between $2n$-manifolds, \emph{Acta Math. Sin. (Engl. Ser.)} \textbf{20} (2004), 1-14.

\bibitem{DW20} H. Duan and S. Wu, {\em The multi-degree of covering on Lie groups}, preprint, 2020.


\bibitem{James58} I. M. James, {\em Cross-sections of Stiefel manifolds}, Proc. London Math. Soc. (3) \textbf{8} (1958), 536-547.


\bibitem{McCleary01} J. McCleary, {\em A user's guide to spectral sequences}, Cambridge Studies in Advanced Mathematics \textbf{58}, Cambridge University Press, (2001). 

\bibitem{Ruiz69} \rev{C. A. Ruiz, {\em The cohomology of the complex projective Stiefel manifold}, Trans. Amer. Math. Soc. \textbf{146} (1969), 541-547.}


\bibitem{Wall62} C. T. C. Wall, {\em Classification of $(n-1)$-connected $2n$-manifolds}, Ann. of Math. \textbf{75} (1962), 163-189.

\bibitem{Wan02} S. C. Wang, Non-zero degree maps between 3-manifolds, \emph{Proceedings of the ICM}, Beijing 2002, vol. 2, 457-470.


\end{thebibliography}
\end{document}